\documentclass[11pt]{amsart}
\usepackage{a4}
\usepackage{amssymb}
\usepackage{comment}

\usepackage{color}
\voffset-1.0 cm
\hoffset -1.5 cm
\textwidth 15 cm
\textheight 23 cm

\vfuzz2pt 
\hfuzz2pt 

\makeatletter
\newcommand{\leqnomode}{\tagsleft@true}
\newcommand{\reqnomode}{\tagsleft@false}
\makeatother

\newtheorem{theorem}{Theorem}[section]
\theoremstyle{plain}
\newtheorem{lemma}[theorem]{Lemma}

\newtheorem{corollary}[theorem]{Corollary}

\newtheorem{definition}[theorem]{Definition}

\numberwithin{equation}{section}

\DeclareMathOperator*{\esssup}{ess\,sup}

\newcommand\trnorm[1]{\left\|\kern-1.2pt\left|#1\right\|\kern-1.2pt\right|}

\newcommand{\N}{\mathbb{N}}
\newcommand{\C}{\mathbb{C}}
\newcommand{\R}{\mathbb{R}}
\DeclareMathOperator*{\bd}{bd}
\DeclareMathOperator*{\dist}{dist}
\DeclareMathOperator*{\supp}{supp}
\DeclareMathOperator*{\esssupp}{ess\,supp}

\newcommand{\comments}

\let\latexchi\chi
\makeatletter
\renewcommand\chi{\@ifnextchar_\sub@chi\latexchi}
\newcommand{\sub@chi}[2]{
  \@ifnextchar^{\subsup@chi{#2}}{\latexchi^{}_{#2}}%
}
\newcommand{\subsup@chi}[3]{
  \latexchi_{#1}^{#3}%
}
\makeatother
\begin{document}

\title[Density of smooth functions in $W^{1,\Phi}(\Omega)$]{Density of compactly supported smooth functions $C_C^\infty(\mathbb{R}^d)$ in  Musielak-Orlicz-Sobolev spaces $W^{1,\Phi}(\Omega)$}

\keywords{Musielak-Orlicz-Sobolev spaces,  Musielak-Orlicz spaces, variable exponent Sobolev-Lebesgue spaces, density of smooth functions in Musielak-Orlicz-Sobolev space}

\subjclass[2010]{46B42, 46E30, 46E15}

\author{Anna Kami\'{n}ska}
\address{Department of Mathematical Sciences,
The University of Memphis, TN 38152-3240}
\email{kaminska@memphis.edu}

\author{Mariusz \.Zyluk}
\address{Pozna\'n, Poland}
\email{mzyluk@gmail.com}

\date{\today}

\thanks{}

\begin{abstract}
We investigate here the density of the set of the restrictions from $C_C^\infty(\R^d)$ to $C_C^\infty(\Omega)$ in the Musielak-Orlicz-Sobolev space $W^{1,\Phi}(\Omega)$. It is a continuation of  article \cite{KamZyl3}, where we have studied density of $C_C^\infty(\R^d)$ in $W^{k, \Phi}(\R^d)$ for $k\in\N$. The main theorem states that  for an open subset $\Omega\subset \R^d$ with its boundary  of class $C^1$, and  Musielak-Orlicz function  $\Phi$  satisfying  {\rm condition (A1)} which is a sort of log-H\"older continuity and the growth condition  $\Delta_2$,  the set of restrictions of functions from $C_C^\infty(\R^d) $ to $\Omega$ is dense in $W^{1,\Phi}(\Omega)$. We obtain a corresponding result in variable exponent Sobolev space $W^{1,p(\cdot)}(\Omega)$ under the assumption that the exponent $p(x)$ is essentially bounded on $\Omega$ and  $\Phi(x,t) = t^{p(x)}$, $t\ge 0$, $x\in\Omega$, satisfies the log-H\"older condition.  

\end{abstract}

\maketitle

 This article is the second part of  work \cite{KamZyl3} investigating the problems of density of smooth functions in Musielak-Orlicz-Sobolev (MOS) spaces $W^{k,\Phi}(\Omega)$. While in the first part we have  proved that the space $C_C^\infty(\R^d)$ of smooth functions compactly supported in open subset $\Omega\subset \mathbb{R}^d$ is dense in  $W^{k,\Phi}(\R^d)$  under regularity conditions (A1) and $\Delta_2$ on $\Phi$, in this current article  we will establish the density of restrictions of the functions from $C_C^\infty(\R^d)$ to $C_C^\infty(\Omega)$ in $W^{1,\Phi}(\Omega)$ under similar assumptions on $\Phi$ and additional mild assumptions that the boundary of $\Omega$ is of class $C^1$. We assume here that $k=1$ since the whole process   requires  substantially more technical effort than the previous case \cite{KamZyl3}.

In the first section we provide basic notations and facts on Musielak-Orlicz (MO) spaces as well as Musielak-Orlicz-Sobolev (MOS) spaces which is based on the information contained in \cite{KamZyl3}. The second section is the major part of the article where we are proving through the effort of several technical lemmas the main result Theorem \ref{main result of 6}. 

This article is in fact  a different version of the result   
from \cite{Ahmida},  where the authors   have shown that under  the segment condition of the boundary of $\Omega$ (see \cite[Definition 3.21, p. 68]{Adams}), and local integrability of $\Phi$, $\Delta_2$ and $\mathcal{M}2$ condition,    the set of restrictions of functions from $C_C^\infty(\R^d)$ to $C_C^\infty(\Omega)$ is dense in $W^{k,\Phi}(\Omega)$.  As was discussed in the first part \cite{KamZyl3}, condition $\mathcal{M}2$ is rather strong and for instance is not covering the case of variable exponent functions. In section \ref{section 2} we establish a version of the mentioned theorem  for $k=1$ under weaker assumptions, which includes variable exponent functions, Corollary \ref{cor: result of 6}.  

 \begin{section}{Introduction}\label{section 1}
For a natural number $d\in\mathbb{N}$, we denote the $d$-dimensional Euclidean space by $\R^d$.  If  $x=(x_1,\dots,x_d)\in\R^d$ then  $|x|$ is its Euclidean norm.
 By $I_n$, $n=1, \dots, d$,  we denote any interval  $(a_n,b_n), \ [a_n,b_n), \ (a_n,b_n]$ or $ [a_n,b_n]$, where $-\infty<a_n<b_n< \infty$. A cuboid is a set of the form
 $R=I_1\times I_2\times\dots\times I_d.$
For any  point $(x_1,\dots,x_d)=x\in\R^d$ and $l>0$ we define the open cube of center $x$ and side-length $l$ as
\[
Q(x,l)=\left\{(y_1,\dots,y_d)\in\R^d: \ |x_n-y_n|<\frac{l}{2}, \text{ for }n=1,\dots,d \right\}.
\]
Similarly for any  point  $x\in \R^d$ and any $r>0$ we define the open ball of center $x$ and radius $r>0$ as
$B(x,r)=\{y\in\R^d: \ |x-y|<r\}.$
A general open ball  or open cube often will be given without specifying their centers and radii or side lengths. In those cases an open ball will be usually denoted by $B$ and an open cube as $Q$.

For any subset $A\subset \R^d$ we denote by $A^\circ$ its interior and by $\overline{A}$ its closure. As usual  for any set $A\subset \R^d$ we define its boundary as
$\bd(A)=\overline{A}\setminus A^\circ= \overline{A}\cap\overline{A^c}.$

 If $A\subset \R^d$ is a Lebesgue measurable set, then its Lebesgue measure will be denoted as $|A|$. Clearly, for any cuboid $R=I_1\times\dots\times I_d \subset \R^d$, where $I_1,\dots I_d$ are finite intervals, we have
 $|R|=|\overline{R}|=|I_1|\cdots |I_d|$,
 where $|I_n|$ stands for the length of the interval $I_n$, $n=1,\dots,d$. Similarly, for any $x\in \R^d$ and $l,r>0$ we have
$|Q(x,l)|=|\overline{Q(x,l)}|=l^d$ and
$|B(x,r)|=|\overline{B(x,r)}|=\sigma_d r^d$,
where 
\begin{align}\label{1}
 \sigma_d=\begin{cases} \frac{\pi^n}{n!} & d=2n, \ n\in \N \\
\frac{2(n!)(4\pi)^n}{(2n+1)!} & d=2n+1, \ n\in \N.
    \end{cases}   
\end{align}

  \end{section}

For a Lebesgue measurable set $A\subset \mathbb{R}^d$, the set of all Lebesgue measurable complex valued functions  $f:A \to \C$ will be denoted as $L^0(A)$.  Given $f\in L^0(A)$ its support is defined as the set  $\supp(f)=\{x\in A: f(x)\neq 0\}$.
 If $\Omega\subset \R^d$ is open  and $f\in L^0(\Omega)$ then we define the essential support of $f$ as 
$\esssupp (f)=\Omega\setminus\bigcup \{U\subset \Omega: U\text{ is open and } f=0 \text{ a.e. on } U\}.$
 Clearly $\esssupp (f)$ is a closed subset of $\Omega$.
 
For an open set $\Omega\subset \R^d$ the symbol $C^k(\Omega)$ stand for the set of all $k$-times continuously differentiable functions defined on $\Omega$. 
The set of all smooth functions defined on $\Omega$, that is functions having all derivatives,  will be denoted by $C^\infty(\Omega)$.  Clearly if  $f\in C(\Omega)$ then 
$\esssupp (f)=\overline{\supp(f)}.$
 By $C_C^\infty(\Omega)$ we denote the set of all smooth compactly supported functions defined on $\Omega$.

Given $f,g\in L^0(\R^d)$,  the convolution $f*g$ is given by the formula
\[
f*g(x)=\int\limits_{\R^d}f(y)g(x-y)dy,
\]
for $x\in\R^d$ for which the integral exists.
 A function $f\in L^0(\Omega)$ is said to be {\it locally integrable} if $\int\limits_K|f(x)|dx<\infty$ for any compact subset $K$ of $\Omega$.
We denote the set of all locally integrable functions on $\Omega$ by $L^1_{loc}(\Omega)$. 
The restriction of a function $f:\R^d\to\C$ to $\Omega$ is defined as
\[f_{|\Omega}(x)=\begin{cases}f(x) & x\in\Omega \\
0 & x\notin\Omega .\end{cases}\]

As usual, a partial derivative of a function $f$ at a point $x$ with respect to the variable $x_i$ will be denoted as $\partial_i f(x)$, $i=1,\dots,d$.
A multi-index $\alpha=(\alpha_1,\dots,\alpha_d)$ is an ordered $d$-tuple of non-negative integers. We adopt the following notations. 
\begin{enumerate}
    \item[$(1)$]  $|\alpha|= \alpha_1+\dots+\alpha_d$,
    \item[$(2)$]  $\alpha!=\alpha_1!\cdot\dots\cdot\alpha_d!$,
    \item[$(3)$]  $\partial^\alpha=(\partial_1)^{\alpha_1}\dots(\partial_d)^{\alpha_d}$.
    \item[$(4)$]  For $\alpha=(\alpha_1,\dots,\alpha_d)$, $\beta=(\beta_1,\dots,\beta_d)$ we write $\alpha\leq\beta$ if $\alpha_i\leq\beta_i$ for $i=1,\dots,d$.
\end{enumerate}

Now we introduce the notion of the {\it Musielak-Orlicz (MO) function}.

\begin{definition}\label{def}
Let $A\subset \R^d$ be a measurable subset of $\mathbb{R}^d$, a function $\Phi:A\times [0,\infty )\to [0,\infty )$ is called a Musielak-Orlicz function {\rm(}$MO$ function{\rm)} if
\begin{enumerate}
\item[$(a)$]  for every $x\in A$, $t\mapsto\Phi(x,t)$ is convex, 
\item[$(b)$] for every $x\in A$, $\Phi(x,t)=0$ if and only if $t=0$,
\item[$(c)$] for every $ t\in [0,\infty), \ x\mapsto\Phi (x,t)$ is  Lebesgue measurable on $A$.
\end{enumerate}
\end{definition}
We say that $MO$ function $\Phi$ is {\it locally integrable} on $A$, if for every compact set $K\subset A$ and every $\lambda>0$, $\int\limits_K\Phi(x,\lambda)dx<\infty$.
We also say that  $\Phi$  satisfies  {\it $\Delta_2$ condition}, a growth condition, if there exist a constant $C>0$ and a positive function $h\in L^1(A)$, that is $\int_A h(x)\, dx <\infty$, such that 
\[
 \   \Phi(x,2t)\leq C\Phi(x,t)+h(x), \ \ \ \ t\geq 0,\ \text{ a.a.} \ x\in  A.
 \]
    
 Recall that  MO function $\Phi$ defined on an open subset $\Omega\subset\R^d$, satisfies {\it condition} {\rm{(A1)}} if   there exist constants $\beta,\delta \in (0,1)$ such that for all open balls $B$ with $|B|<\delta$ and almost all $x,y\in B\cap \Omega$,
 \[
\Phi(x,\beta t)\leq \Phi(y,t),\ \ \text{whenever}\ \  t\in \left[\Phi^{-1}(y,1) , \Phi^{-1}\left(y,\frac{1}{|B|}\right)\right].
\]
If $\Phi$ satisfies (A1)  and $\Delta_2$  then  it is locally integrable on $\Omega$  (Lemma 4.7 in \cite{KamZyl3}).  The  condition (A1),  which is an analogue of log-H\"older  condition  for the variable exponent MO functions \cite{KamZyl3}, will be crucial in further considerations.

For  a $MO$ function $\Phi$ on $A$ define the functional
\[I_\Phi(f)=\int\limits_{A}\Phi(x,|f(x)|)dx\]
for every $f \in L^0(A)$, and  let 
     $L^\Phi(A)$ be  {\it Musielak-Orlicz space} ($MO$) space associated with  $\Phi$, that is 
    \[
    L^\Phi(A)=\left\lbrace f\in L^0(A): \exists \lambda>0 \ I_\Phi (\lambda f)<\infty \right\rbrace.
    \]
 The following   functional 
\[\|f\|_{\Phi}=\inf\left\lbrace \lambda>0: I_\Phi\left(f/\lambda\right)\leq1\right\rbrace\]
is  the {\it Luxemburg norm} on $L^\Phi(A)$.  The space $L^\Phi(A)$ equipped with this norm is a Banach function space \cite{KamZyl3, Mus, zyl}.

Following \cite[p. 22]{Adams}, we say that a function $f\in L^1_{loc}(\Omega)$ is {\it weakly  $k$-differentiable} if for every multi-index $\alpha$ with $|\alpha|\leq k$ there exists a function $f_\alpha\in L^1_{loc}(\Omega)$ such that  for every $u\in C^\infty_C(\Omega)$ we have
\[\int\limits_\Omega f(x)\partial^\alpha u(x)dx= (-1)^{|\alpha|}\int\limits_\Omega f_\alpha(x) u(x)dx.\]
In this case we denote $\partial^\alpha f=f_\alpha$ and say that $f_\alpha$ is a {\it weak $\alpha$-derivative of $f$}.

Weak differentiability has the following useful characterization (e.g. \cite{zyl}). 
\begin{lemma}\label{characterization of weak differentiability}
Let $\Omega$ be an open subset of $\R^d$, $f,g$ be locally integrable functions on $\Omega$ and  $\alpha$ be a multi-index. The function $f$ is the $\alpha$-weak derivative of $g$ if and only if for every $u\in C_C^k(\Omega)$, where $k=|\alpha|$, we have
\[\int\limits_{\Omega} g(x)\partial^\alpha u(x)dx=(-1)^{|\alpha|}\int\limits_{\Omega}f(x)u(x)dx.\]
\end{lemma}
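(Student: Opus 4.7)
The $(\Leftarrow)$ direction is immediate: if the integration by parts identity holds for every $u\in C_C^k(\Omega)$, then in particular it holds for every $u\in C_C^\infty(\Omega)\subset C_C^k(\Omega)$, which is exactly the definition of $f$ being the $\alpha$-weak derivative of $g$. The content of the lemma is therefore the $(\Rightarrow)$ direction: upgrading the defining identity from smooth test functions to merely $C^k$ test functions. The strategy is the standard mollification argument.

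Fix $u\in C_C^k(\Omega)$ with $K:=\supp(u)$ compact in $\Omega$. Choose an open set $V$ with $K\subset V\subset\overline{V}\subset\Omega$ and $\overline{V}$ compact; let $\eta=\dist(K,\partial V)>0$. Let $\rho_\eps$ be a standard mollifier supported in $B(0,\eps)$, and set $u_\eps=u*\rho_\eps$. For $0<\eps<\eta$, one has $u_\eps\in C_C^\infty(\Omega)$ with $\supp(u_\eps)\subset V$. Since $u\in C^k$, differentiation commutes with convolution up to order $k$, so $\partial^\alpha u_\eps = (\partial^\alpha u)*\rho_\eps$ on $\R^d$ (interpreting $u$ as zero outside $\Omega$ near $K$); because $\partial^\alpha u$ is continuous and compactly supported, this yields uniform convergence $u_\eps\to u$ and $\partial^\alpha u_\eps\to\partial^\alpha u$ on $\overline{V}$ as $\eps\to 0^+$.

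Now apply the definition of weak differentiability to each $u_\eps\in C_C^\infty(\Omega)$:
\[
\int_\Omega g(x)\,\partial^\alpha u_\eps(x)\,dx = (-1)^{|\alpha|}\int_\Omega f(x)\,u_\eps(x)\,dx.
\]
Both integrals are really integrals over $\overline{V}$ since $\supp(u_\eps)\subset V$. By local integrability, $f,g\in L^1(\overline{V})$, so the uniform convergences above together with dominated convergence (bounding $|u_\eps|$ and $|\partial^\alpha u_\eps|$ by $\|u\|_{C^k}+1$ times $\chi_{\overline{V}}$, which is integrable against $|f|$ and $|g|$) let us pass to the limit on both sides, producing the desired identity for $u$.

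The only subtle point, which I expect to be the minor obstacle, is justifying that $\partial^\alpha(u*\rho_\eps)=(\partial^\alpha u)*\rho_\eps$ for $u\in C_C^k(\Omega)$ and that this mollification stays inside $C_C^\infty(\Omega)$; this is handled by extending $u$ by zero to $\R^d$ (legitimate since $u\equiv 0$ in a neighborhood of $\partial\Omega$ inside $\Omega$) and by the classical identity $\partial^\alpha(\varphi*\rho_\eps)=(\partial^\alpha\varphi)*\rho_\eps$ whenever the derivatives of $\varphi$ up to order $|\alpha|$ exist continuously. Everything else is routine dominated convergence on the compact set $\overline{V}$.
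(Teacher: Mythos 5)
Your proof is correct and follows essentially the same route as the paper: mollify the $C_C^k$ test function, use $\partial^\alpha(u*\rho_\eps)=(\partial^\alpha u)*\rho_\eps$ and uniform convergence on a fixed compact set, and pass to the limit against the locally integrable $f,g$. The only cosmetic difference is that the paper phrases the limit passage in terms of continuity of the functionals induced by $f\chi_K,g\chi_K$ on $L^\infty(K)$ rather than invoking dominated convergence directly, but these are the same argument.
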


\begin{proof}
 Let $\Omega$ be an open subset of $\R^d$, $f,g$ be  locally integrable functions defined on $\Omega$ and $\alpha$ be a multi-index. Clearly if for every $u\in C_C^k(\Omega)$, where $k=|\alpha|$, we have
\[\int\limits_{\Omega} g(x)\partial^\alpha u(x)dx=(-1)^{|\alpha|}\int\limits_{\Omega}f(x)u(x)dx,\]
then $f$ is $\alpha$-weak derivative of $g$, as $C_C^\infty(\Omega)\subset 
C_C^k(\Omega)$.

  On the other hand, assume that $f$ is the $\alpha$-weak derivative of $g$ and take any $u\in C_C^k(\Omega)$. Recall that standard mollifier is the function $J:\R^d\to [0,\infty)$ given by the formula
\[J(x)=Ce^{\frac{-1}{1-|x|^2}}\chi_{B(0,1)}(x),\]
and that, for $r>0$ we have
$J_{(r)}(x):=\frac{1}{r^d}J\left(\frac{1}{r}x\right)$. We define the function $u_r$ by 
\[u_r(x)=(u*J_{(r)})(x),\]
where $x\in \R^d$. Notice now that since $u\in C^k_C(\Omega)$, then for small enough $r$, say for $r<R$, we have $\esssupp u_r\subset \Omega$. Clearly $u_r$ is a smooth function, since $J_{(r)}$ is smooth, hence for  $r<R$ we have  $u_r\in C_C^\infty(\Omega)$. Since $f$ is the $\alpha$-weak derivative of $g$ we get that
\[\int\limits_{\Omega} g(x)\partial^\alpha u_r(x)dx=(-1)^{|\alpha|}\int\limits_{\Omega}f(x)u_r(x)dx.\]
Moreover, by the fact that we have $\partial^\alpha u_r=J_{(r)}*\partial^\alpha u $ and that both $u$ and $\partial^\alpha u$ are continuous functions, we have that $\partial^\alpha u_r$ and $u_r$ converge uniformly to $\partial^\alpha u$ and $u$ respectively, as $r$ goes to $0$ (see \cite[Theorem 8.14, p. 242]{Folland} ). Notice now that, for $r<R/2$ the supports of all functions $u_r$ are contained in one compact set $\esssupp u_{R/2}=K\subset \Omega$. Since both $f$ and $g$ are locally integrable functions $f\chi_K$ and $g\chi_K$ generate continuous functionals on $L^{\infty}(K)$ of the form $u\mapsto \int\limits_K g(x)u(x)dx$ and  $u\mapsto \int\limits_K f(x)u(x)dx$, where $u\in L^\infty(K)$. Using the continuity of those functionals  and the fact that both $\partial^\alpha u_r$  and $u_r$ converge uniformly on $\Omega$ respectively to $\partial^\alpha u$ and $u$ as $r$ goes to $0$, we conclude that 
\begin{align*}
  \int\limits_{\Omega} g(x)\partial^\alpha u(x)dx&=\lim\limits_{r\to 0}\int\limits_{K} g(x)\partial^\alpha u_r(x)dx\\
  &=\lim\limits_{r\to 0}(-1)^{|\alpha|}\int\limits_{K}f(x)u_r(x)dx=(-1)^{|\alpha|}\int\limits_{\Omega}f(x)u(x)dx.
\end{align*}
\end{proof}

 For any  $k\in \N$, an open subset $\Omega\subset \R^d$  and  locally integrable MO function $\Phi$  on $\Omega$, define the  {\it Musielak-Orlicz-Sobolev spaces} (in short MOS spaces) \cite{Ahmida, benkir, ChGSGWK,  CUF, DHHR, Has2, Hasbook, hudzik1979, KamZyl1, KamZyl3, pepiczki, zyl}  as follows
\[W^{k,\Phi}\left(\Omega \right)=\left\{f\in L^0(\Omega):\partial^\alpha f\in L^\Phi(\Omega), \ |\alpha|\leq k \right\}.\]
We endow $W^{k,\Phi}\left(\Omega \right)$ with the norm 
\[ 
\|f\|_{W^{k,\Phi}}=\sum\limits_{|\alpha|\leq k}\|\partial^\alpha f\|_\Phi.
\]
Let $\{f_n\}_{n=1}^\infty\subset W^{k,\Phi}(\Omega)$ and $f\in W^{k,\Phi}(\Omega)$. We notice that $f_n$ converges to $f$ in  $W^{k,\Phi}(\Omega)$ if for all  $\lambda>0$ and for any multi-index $\alpha$ with $|\alpha|\leq k$,
\[\lim\limits_{n\to\infty}\int\limits_\Omega \Phi\left(x,\lambda\left|\partial^\alpha f_n(x)-\partial^\alpha f(x)\right|\right)dx=0.\]
If $\Phi(x,t) = t^{p(x)}$, where $t\ge 0$ and  $p(x)\ge 1$ for a.a. $x\in\Omega\subset  \mathbb{R}^d$, is a variable exponent function, then the corresponding Musielak-Orlicz-Sobolev space is denoted by $W^{k,p(\cdot)}(\Omega)$ and  is called variable exponent Sobolev space.

\begin{section}{Density of compactly supported smooth functions}\label{section 2}
 In essence the whole section is a step by step proof of  the main result Theorem \ref{main result of 6}. Our goal here is to establish the density of restrictions of the elements from $C_C^\infty (\R^d)$ to $\Omega$ in the space $W^{1,\Phi}(\Omega)$.

  First let us remark that analogous theorem holds in classical Sobolev spaces $W^{k,p}(\Omega)$, for any $1\leq p<\infty$, any $k\in\N$ and domain $\Omega$ whose boundary is locally a graph of a continuous function, (see for example \cite{Adams}). Unfortunately, in the case of MOS spaces the method used to prove the mentioned result cannot work. First, it uses the fact that for any $x\in \R^d$ the translation operators $\tau_x : L^p(\R^d)\to L^p(\R^d)$, given by $(\tau_x f)(y)=f(y-x)$,  $f\in L^p(\R^d)$, $y\in \R^d$ are all isometries, which is not true in a general MO space as they may be even ill-defined. Secondly the group $\{\tau_x\}_{x\in\R^d}$ is strongly continuous on $L^p(\R^d)$ in the sense that for $f\in L^p$ we have $\lim\limits_{x\to 0}\|\tau_x f-f\|_p=0$, once again this not true for $L^\Phi(\Omega)$  (see \cite[Theorem 2.1]{{kaminskacompact}}). The third problem is that in the case of $L^p(\Omega)$ one can think of it as a MO space on $\Omega$ which is given by the function $\Phi(x,t)=t^p$, where $x\in \Omega$ and $t\geq 0$. Such function can be extended to a MO function on the whole $\R^d$ in an obvious fashion, with the extension having all nice properties of the original function. In the case of a general MO function the existence of a good extension is not guaranteed.

  The reasons above suggest that a different method is required to achieve our goal. We approached the problem via the means of local extensions of both the function $\Phi$ and  elements $f\in W^{1,\Phi} (\Omega)$. Since both extensions are constructed by reflection of parts of the domain $\Omega$ about its boundary, it only works if we expect our functions $f$ to by once weakly differentiable. Moreover the method also requires higher regularity of the boundary of $\Omega$.

  We start our exposition by introducing some notation. We base our approach on that of \cite{Giovanni Leoni}. For any $x=(x_1,\dots,x_d)\in\R^d$ we define $x'\in \R^{d-1}$ to be \[x'=(x_1,..,x_{d-1}).\]
Moreover for $x\in\R^d$ such that $x=0$, we have that $x'=0$ in $\R^{d-1}$.
With a slight abuse of notation we write 
\[x=(x',x_d).\]
Recall that for $ y\in\R^d $, $d\in\N$  and $r>0$, $Q(y,r)$ is a cube with center $y$ and side length $2r$. For a fixed $d\in\N$ we will need both cubes in $\R^{d}$ and in $\R^{d-1}$. For a fixed $(y',y_d)=y\in\R^{d}$ we denote the cube in $\R^{d-1}$, centered at $y'$ and of side length $2r$ as  $Q_{d-1}(y',r)$.

  We will also need the notion of {\it rigid motion} on $\R^d$. A rigid motion on $\R^d$ is the map $T:\R^d\to\R^d$ of the form
\[T=R+c,\]
where $R$ is a rotation in $\R^d$ and $c\in\R^d$. For a fixed rigid motion $T$ and a point $x\in\R^d$ we define the {\it local coordinates of $x$} as the vector $y$, where
\[y:=T(x).\]
  Now we recall the notion of differentiable transformations between open subsets of $\R^d$. For $m\in\N$ and an open subset $U\subset \R^d$, by $C^m(U)$ we denote  all complex valued $m$-times
continuously differentiable functions defined on $U$.

\begin{definition}
 Let $U,V$ be open subsets of $\R^d$ and $\mathcal{R}:U\to V$ be a map. For a fixed $m\in\N$ we say that $\mathcal{R}$ belongs to $C^m(U,V)$, if for $i=1,\dots,d$ there exist functions $\varrho_i\in C^m(U)$ such that for $x\in U$,  
 $\mathcal{R}(x)=(\varrho_1(x),\dots,\varrho_d(x)).$
  \end{definition}
 For $\mathcal{R}\in C^m(U,V)$ and $x\in\R^d$ we define the {\it Jacobian matrix} of $\mathcal{R}$ at $x$ as
 \[ J_{\mathcal{R}}(x)=(\partial_j\varrho_i(x))_{i,j=1}^d.\]

We call $|\det J_{\mathcal{R}}(x)|$ the {\it Jacobian} of $\mathcal{R}$ at $x$.
The following substitution formula will be needed in the sequel. The proof of this result can be found in \cite[11.53, p.341]{Giovanni Leoni}.

 \begin{theorem}\label{Integration by substitution formula}(Change of variables) Let $U,V$ be open sets in $\R^d$ and $\mathcal{R}:U\to V$ be an invertible map of class $C^1(U,V)$, such that its inverse $\mathcal{R}^{-1}$ is of class $C^1(V,U)$. For any $f\in W^{1,1}(V)$ the function $f\circ\mathcal{R}$ is and element of $W^{1,1}(U)$, moreover, for any $i=1,\dots, d$ the following formula holds for a.a $x\in U$,
 \[\partial_i(f\circ \mathcal{R})(x)=\sum\limits_{j=1}^d\partial_j f(\mathcal{R}(x))\partial_i\varrho_j(x).\]
\end{theorem}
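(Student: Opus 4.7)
The plan is to establish the chain rule identity first for smooth $f$ via the classical chain rule, then extend to $f\in W^{1,1}(V)$ by mollification, using the classical change-of-variables formula for Lebesgue integrals to transfer convergence between $U$ and $V$. The $W^{1,1}(U)$-membership of $f\circ\mathcal{R}$ will follow from the same change of variables applied to $|f|$ and $|\partial_j f|$, the resulting integrands picking up the continuous Jacobian weight $|\det J_{\mathcal{R}^{-1}}|$ which is locally bounded since $\mathcal{R}^{-1}\in C^1(V,U)$.

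If $f\in C^1(V)\cap W^{1,1}(V)$, then $f\circ\mathcal{R}\in C^1(U)$ and the classical chain rule yields the desired identity pointwise; this classical derivative also serves as the weak one. For general $f$, let $f_\varepsilon = f*J_{(\varepsilon)}$ on $V_\varepsilon = \{y\in V:\dist(y,\partial V)>\varepsilon\}$; by standard mollifier theory, $f_\varepsilon\in C^\infty(V_\varepsilon)$ with $f_\varepsilon\to f$ and $\partial_j f_\varepsilon\to\partial_j f$ in $L^1_{loc}(V)$ as $\varepsilon\to 0$. Fix a test function $u\in C_C^\infty(U)$ and set $K=\supp u$. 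The image $\mathcal{R}(K)$ is compact in $V$, so $\mathcal{R}(K)\subset V_\varepsilon$ for $\varepsilon$ sufficiently small, whence $f_\varepsilon\circ\mathcal{R}$ is smooth on a neighborhood of $K$. Applying the smooth case to $f_\varepsilon$ and integrating by parts against $u$ gives
\[
\int_U (f_\varepsilon\circ\mathcal{R})(x)\,\partial_i u(x)\,dx = -\int_U\sum_{j=1}^d (\partial_j f_\varepsilon)(\mathcal{R}(x))\,\partial_i\varrho_j(x)\,u(x)\,dx.
\]
By the classical change of variables $y=\mathcal{R}(x)$, both sides transform into integrals over $\mathcal{R}(K)\subset V$ against the continuous (hence bounded on the compact set $\mathcal{R}(K)$) weight $|\det J_{\mathcal{R}^{-1}}(y)|$. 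Thus $L^1_{loc}(V)$-convergence of $f_\varepsilon$ and $\partial_j f_\varepsilon$ transfers to convergence of the displayed integrals, and letting $\varepsilon\to 0$ yields the same identity with $f$ and $\partial_j f$ in place of $f_\varepsilon$ and $\partial_j f_\varepsilon$. Lemma \ref{characterization of weak differentiability} then identifies $\sum_j (\partial_j f)(\mathcal{R}(\cdot))\,\partial_i\varrho_j$ as the weak derivative $\partial_i(f\circ\mathcal{R})$.

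The main obstacle is the limit step: one must push $L^1_{loc}(V)$-convergence of the mollified derivatives back to $U$ through the pull-back by $\mathcal{R}$. This works cleanly precisely because $\mathcal{R}$ is of class $C^1$ with a $C^1$ inverse, so the Jacobian weight introduced by the change of variables is continuous and therefore bounded on the compact set $\mathcal{R}(K)$. If $\mathcal{R}$ were merely Lipschitz, the pointwise chain rule would fail outside a Rademacher-null set and a substantially more delicate approximation argument would be required.
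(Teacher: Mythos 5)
The paper offers no proof of this theorem; it is cited directly from \cite[11.53, p.\ 341]{Giovanni Leoni}, so there is nothing internal to compare against. Your mollification argument for the chain-rule identity is a sound and standard route: for each fixed $u\in C_C^\infty(U)$ with $K=\supp u$, the change of variables $y=\mathcal{R}(x)$ turns both sides of the integrated-by-parts identity into integrals over the compact set $\mathcal{R}(K)$ against weights built from $\partial_i u$, $\partial_i\varrho_j$, and $|\det J_{\mathcal{R}^{-1}}|$, all continuous and hence bounded on $\mathcal{R}(K)$; thus the $L^1_{loc}(V)$-convergences $f_\varepsilon\to f$ and $\partial_j f_\varepsilon\to\partial_j f$ pass to the limit and identify $\sum_j(\partial_j f)(\mathcal{R}(\cdot))\,\partial_i\varrho_j$ as the weak $i$-th derivative of $f\circ\mathcal{R}$. (The appeal to Lemma \ref{characterization of weak differentiability} is superfluous here since you are already testing against $C_C^\infty(U)$.)

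The one genuine gap is in the claim that $f\circ\mathcal{R}\in W^{1,1}(U)$, not merely $W^{1,1}_{loc}(U)$. You argue this follows because $|\det J_{\mathcal{R}^{-1}}|$ is ``locally bounded,'' but local boundedness only controls $\int_{\mathcal{R}^{-1}(K)}|f\circ\mathcal{R}|$ for compact $K\subset V$. To control $\int_U|f\circ\mathcal{R}|\,dx=\int_V|f(y)|\,|\det J_{\mathcal{R}^{-1}}(y)|\,dy$ by $\|f\|_{L^1(V)}$, and likewise $\int_U|\partial_i(f\circ\mathcal{R})|$ by $\sum_j\|\partial_j f\|_{L^1(V)}$, one needs global $L^\infty$ bounds on $|\det J_{\mathcal{R}^{-1}}|$ and on $(\partial_i\varrho_j)\circ\mathcal{R}^{-1}\cdot|\det J_{\mathcal{R}^{-1}}|$ over all of $V$. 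A $C^1$ diffeomorphism of open sets with $C^1$ inverse need not satisfy this: take $\mathcal{R}(x)=x/(1+x)$ from $(0,\infty)$ onto $(0,1)$ and $f\in C^\infty((0,1))\cap W^{1,1}((0,1))$ with $f\equiv 1$ near $y=1$; then $f\circ\mathcal{R}\equiv 1$ for large $x$, so $f\circ\mathcal{R}\notin L^1((0,\infty))$. So either the required global bounds on the Jacobians must be added to the hypotheses (they do hold in the paper's application, where $W$ is bounded and $\nabla\mathfrak{f}$ is bounded, and are part of the hypotheses of the cited result in Leoni), or the conclusion should read $W^{1,1}_{loc}(U)$. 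The chain-rule formula you derived is unaffected; only the integrability claim needs the extra hypothesis made explicit.
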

Now we can make a precise definition of a set with regular boundary.
\begin{definition}\label{boundary of class C^m}
Given an open set $\Omega\subset \R^d$ we say that $\bd(\Omega)$ is of class $C^1$ if for every $x_0\in\bd(\Omega)$ there exist an open neighborhood $U$ of $x_0$, a rigid motion $T$, a number $r>0$ and  $\mathfrak{f}\in C^1(Q_{d-1}(0,r))$ such that
\[T(x_0)=0,\]
\[T(\Omega\cap U)=\{y\in Q(0,r):y_d>\mathfrak{f}(y')\}.\]
\end{definition}

In the sequel the symbol $\mathfrak{f}$ will be restricted only to the function $\mathfrak{f}\in C^1(Q_{d-1}(0,r))$ from the above definition. 

  The next theorem will be crucial in further constructions as it provides us with both a family of very regular open neighborhoods of points in $\bd(\Omega)$ and reflection maps on those neighborhoods.

\begin{theorem}\label{reflection property}
 Let $\Omega\subset\R^d$ be open and its boundary $\bd(\Omega)$ be of class $C^1$. Then for each $x_0\in\bd(\Omega)$  there exist two triples $(V,\{V_{(t_0,t_1)}\}_{(t_0,t_1)\subset (-1,1)},\mathcal{R})$ and $(W,\{W_{(t_0,t_1)}\}_{(t_0,t_1)\subset (-1,1)},\mathcal{R'})$  where $V$ is an open neighborhood of $x_0$, $W$ is an open neighborhood of $0$, $\{V_{(t_0,t_1)}\}_{(t_0,t_1)\subset (-1,1)}$ and $\{W_{(t_0,t_1)}\}_{(t_0,t_1)\subset (-1,1)}$ are families of open sets and $\mathcal{R},\mathcal{R'}$ are respectively $C^1(V,V)$ and $C^1(W,W)$ maps.  The triple  $(W,\{W_{(t_0,t_1)}\}_{(t_0,t_1)\subset (-1,1)},\mathcal{R'})$ is of the form 
      \[W=\bigcup\limits_{t\in (-1,1)}S+At e_d,\]
and      for $-1\leq t_0<t_1\leq 1$,
    \[W_{(t_0,t_1)}=\bigcup\limits_{t\in (t_0,t_1)}S+Ate_d,\]  
    where $A>0$ is some number and 
          \[S=\{(x,\mathfrak{f}(x))\in \R^d: x\in Q_{d-1}(0,r')\},\]
where $\mathfrak{f}$ is the $C^1(Q_{d-1}(0,r))$ function associated to $x_0$ by Definition \ref{boundary of class C^m} and $0<r'<r$ is such that, for each $i=1,\dots, d-1$ we have 
      \[\sup\limits_{x\in Q_{d-1}(0,r')}|\partial_i \mathfrak{f}(x)|<\infty.\]
Furthermore the map $\mathcal{R}':W\to W$ is given by the formula
     \[\mathcal{R'}(y',\mathfrak{f}(y')+At)=(y',\mathfrak{f}(y')-At),\]
     where $(y',\mathfrak{f}(y')+At)=y\in W$. 

   The triple $(V,\{V_{(t_0,t_1)}\}_{(t_0,t_1)\subset (-1,1)},\mathcal{R})$ is connected to  $(W,\{W_{(t_0,t_1)}\}_{(t_0,t_1)\subset (-1,1)},\mathcal{R'})$  by the formulas $V=T^{-1}(W)$, $V_{(t_0,t_1)}=T^{-1}(W_{(t_0,t_1)})$ and $\mathcal{R}=T^{-1}\circ\mathcal{R'}\circ T$, where $T$ is the rigid motion associated with $x_0$ via the definition \ref{boundary of class C^m}. Moreover the triple $(V,\{V_{(t_0,t_1)}\}_{(t_0,t_1)\subset (-1,1)},\mathcal{R})$ satisfies the following conditions 
 \begin{enumerate}
     \item[$(1)$]  $V_\emptyset=\emptyset$, $V_{(-1,1)}=V$ and for each $-1\leq t_0<t_1\leq 1$ and  $-1\leq t_0'<t_1'\leq 1$, 
     \[V_{(t_0,t_1)}\cap V_{(t_0',t_1')}=V_{(t_0,t_1)\cap (t_0',t_1')}.\]
     \item[$(2)$]  \[V_{(-1,0)}=V\cap\overline{\Omega}^C\text{ and  } V_{(0,1)}=V\cap\Omega.\]
     \item[$(3)$]  If $-1\leq t_0<0<t_1\leq 1$, then
     \[\bd(\Omega)\cap V\subset V_{(t_0,t_1)}\]
     \item[$(4)$]  There exists a constant $C>0$ such that for any $(t_0,t_1)\subset (-1,1)$,
     \[|V_{(t_0,t_1)}|=C|t_1-t_0|\]
     \item[$(5)$]  $\mathcal{R}\circ\mathcal{R}=Id_V,$
     \item[$(6)$]  $\mathcal{R}_{|V\cap \bd (\Omega)}=Id_{|V\cap \bd(\Omega)},$
     \item[$(7)$]   for any $(t_0,t_1)\subset (-1,1)$,
     \[\mathcal{R}(V_{(t_0,t_1)})=V_{(-t_1,-t_0)}\]
     \item[$(8)$]  If $\mathcal{R}(x)=(\varrho_i(x),\dots,\varrho_d(x))$, then there exists a constant $C_{\mathcal R}>0$ such that, for each $i=1,\dots,d$ and each multi-index $\alpha$ with $|\alpha|=1$,
     \[\sup_{x\in V}|\partial^\alpha\varrho_i (x)|<C_{\mathcal R}.\]
 \end{enumerate}.

\end{theorem}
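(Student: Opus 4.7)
The plan is to carry out the entire construction in the local coordinates supplied by Definition \ref{boundary of class C^m} and then transport everything back to a neighborhood of $x_0$ via the inverse of the rigid motion $T$. Starting from $x_0\in\bd(\Omega)$, apply Definition \ref{boundary of class C^m} to obtain $U$, $T$, $r$ and $\mathfrak{f}\in C^1(Q_{d-1}(0,r))$ with $T(x_0)=0$ and $T(\Omega\cap U)=\{y\in Q(0,r):\ y_d>\mathfrak{f}(y')\}$; in particular $\mathfrak{f}(0)=0$. First I would pick $r'<r$ so that $\overline{Q_{d-1}(0,r')}\subset Q_{d-1}(0,r)$; by continuity, each $\partial_i\mathfrak{f}$ is then bounded on the compactum $\overline{Q_{d-1}(0,r')}$, and $\sup|\mathfrak{f}|$ on that set can be made arbitrarily small by shrinking $r'$. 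Next I would choose $A>0$ small enough that $\mathfrak{f}(y')+At$ remains inside the $d$-th coordinate range of $Q(0,r)$ for every $y'\in Q_{d-1}(0,r')$ and $t\in(-1,1)$. These choices make $S$, $W$ and $\{W_{(t_0,t_1)}\}$ well-defined subsets of $Q(0,r)$.

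For the reflection itself I would use the closed-form expression
\[
\mathcal{R}'(y)=\bigl(y',\,2\mathfrak{f}(y')-y_d\bigr),\qquad y=(y',y_d)\in W,
\]
which agrees with the stated formula because $2\mathfrak{f}(y')-(\mathfrak{f}(y')+At)=\mathfrak{f}(y')-At$. This map is manifestly of class $C^1(W,W)$, and direct substitution yields $\mathcal{R}'\circ\mathcal{R}'=Id_W$, giving (5); evaluating at $t=0$ gives $\mathcal{R}'|_S=Id_S$, which after transport by $T$ yields (6). Properties (1), (3) and (7) rest on the observation that every $y\in W$ admits a \emph{unique} representation $(y',\mathfrak{f}(y')+At)$ with $t\in(-1,1)$, so that $t$ is a well-defined function of $y$ which $\mathcal{R}'$ simply negates. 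Property (2) follows from the defining equality $T(\Omega\cap U)=\{y_d>\mathfrak{f}(y')\}$ together with the analogous identity for $\overline{\Omega}^c$ (obtained after shrinking the neighborhood so that $T(V)=W$): the slab parameter $t$ is positive exactly when $y_d>\mathfrak{f}(y')$, and negative exactly in the exterior.

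Property (4) reduces to a Fubini computation using the parametrization $\Psi(y',s)=(y',\mathfrak{f}(y')+As)$, which has constant Jacobian determinant $A$. Consequently
\[
|W_{(t_0,t_1)}|=\int_{Q_{d-1}(0,r')}\int_{t_0}^{t_1}A\,ds\,dy'=A\,|Q_{d-1}(0,r')|\,(t_1-t_0),
\]
and $|V_{(t_0,t_1)}|=|W_{(t_0,t_1)}|$ because rigid motions preserve Lebesgue measure. Property (8) follows from the chain rule $J_{\mathcal{R}}(x)=J_{T^{-1}}(\mathcal{R}'(Tx))\,J_{\mathcal{R}'}(Tx)\,J_T(x)$: the Jacobians of $T$ and $T^{-1}$ are constant rotations, while the entries of $J_{\mathcal{R}'}$ are $0$, $\pm1$, or $2\partial_j\mathfrak{f}(y')$, all of which are bounded on $W$ by the choice of $r'$.

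The only delicate point is the coordinated choice of $r'$ and $A$ that simultaneously keeps $W\subset Q(0,r)$, guarantees injectivity of the parametrization $\Psi$, and keeps the first partials of $\mathfrak{f}$ bounded on $\overline{Q_{d-1}(0,r')}$. Once these constants are fixed, everything else is essentially bookkeeping, and transport by the affine isometry $T^{-1}$ automatically preserves every item on the list.
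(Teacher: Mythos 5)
Your proposal is correct and follows essentially the same route as the paper: pass to the local coordinates supplied by Definition \ref{boundary of class C^m}, define $W$ as a union of translated copies of the graph $S$, use the closed-form reflection $\mathcal{R}'(y)=(y',2\mathfrak{f}(y')-y_d)$, verify each property in the $W$-picture, and transport back through $T^{-1}$. The paper is more explicit about two of the points you flag as ``bookkeeping'' --- it pins down $A$ via $A=\dist(S,\bd(Q(0,r)))/(8d)$ after first shrinking $r'$ so that $|\mathfrak{f}|<r/2$ on $Q_{d-1}(0,r')$, and it gives a separate $\varepsilon$--$s$ argument that each $W_{(t_0,t_1)}$ is open (a fact your sketch leaves implicit in the claim that $\Psi$ is a well-behaved parametrization) --- but the strategy and every key computation match.
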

\begin{proof}
  Let $\Omega$ be as in the assumption. Take any $x_0\in\bd(\Omega)$ and let $r>0,U,T,\mathfrak{f}$ be as in the Definition \ref{boundary of class C^m}. We have that
  \[T(\Omega\cap U)=\{y\in Q(0,r):y_d>\mathfrak{f}(y')\}.\]
  Notice that
 \[T(\bd(\Omega)\cap U)=\{y\in Q(0,r): y_d=\mathfrak{f}(y')\}\text{ and } \mathfrak{f}(0)=0.\]
  Indeed, take any $x\in\bd(\Omega)\cap U$. Then there exists a sequence $\{x_n\}_{n=1}^\infty\subset\Omega\cap U$ such that $\lim\limits_{n\to\infty}x_n=x$. Denote $y_n=T(x_n)$, where $y_n=(y'_n,y_{n,d})$ and $T(x)=y$, then $\{y_n\}_{n=1}^\infty\subset T(\Omega\cap U)$ and $\lim\limits_{n\to \infty} y_n=y$. Hence for any $n\in\N$ we have $y_{n,d}>\mathfrak{f}(y_n')$ and $\lim\limits_{n\to\infty}y_{n,d}=y_d$. Continuity of $\mathfrak{f}$ implies that $y_d\geq \mathfrak{f}(y')$. On the other hand, suppose that $y_d>\mathfrak{f}(y')$, then $T(x)\in T(\Omega\cap U)$. Hence $x$ would be an element of $U\cap\Omega$, which cannot be true as $x\in\bd(\Omega)$. Therefore $y_d=\mathfrak{f}(y')$. In particular, since $T(x_0)=0$, we conclude that $\mathfrak{f}(0)=0$.
  
   By continuity of $\mathfrak{f}$ and $\mathfrak{f}(0)=0$ there exists $0<r'<\frac{r}{2}$ such that, for $y\in Q(0,r')\cap T(\bd(\Omega)\cap U)$ we have $|\mathfrak{f}(y')|<\frac{r}{2}$.
  Taking any $z\in\bd(Q(0,r))$ and $y\in Q(0,r')\cap T(\bd(\Omega)\cap U)$ we have $z=(z',z_d)$ and $y=(y',\mathfrak{f}(y'))$ and
  \begin{align*}
  |z-y|&=|(z',z_d)-(y',\mathfrak{f}(y'))|=|(z'-y',z_d-\mathfrak{f}(y'))|\\&\geq\max
  \{\max\limits_{i=1,\dots,d-1}|z_i-y_i|, |z_d-\mathfrak{f}(y')|\} \\
  &\geq\max
  \{\max\limits_{i=1,\dots,d-1}| \ |z_i|-|y_i| \ |, | \ |z_d|-|\mathfrak{f}(y')| \ |\}.    
  \end{align*}
  
  Since $z\in \bd(Q(0,r))$ there exists $1\leq i\leq d$ for which $|z_i|=r$. From the fact that $y\in Q_{d-1}(0,r')$ and $|y_d|=|\mathfrak{f}(y')|<\frac{r}{2}$ and $r'<\frac{r}{2}$ we have
  \begin{align}\label{distance between boundry of the set and cube}
   2dr\geq|(z',z_d)-(y',\mathfrak{f}(y'))|\geq\frac{r}{2}.   
  \end{align}

 Defining now the set $S=\{y\in Q(0,r'):y_d=\mathfrak{f}(y')\}$, we have
  \[S\subset\{y\in Q(0,r):y_d=\mathfrak{f}(y')\}=T(\bd(\Omega)\cap U).\]
Moreover, by inequality (\ref{distance between boundry of the set and cube}) we get
   \[\frac{r}{2}\leq\dist(S,\bd(Q(0,r)))\leq 2dr.\]
Let $A:=\dist(S,\bd(Q(0,r)))/(8d)$, then $A\leq\frac{r}{4}$. Define now

\[W:=\bigcup\limits_{-1<t<1}tA e_d+S\]
\[W_{(t_0,t_1)}:=\bigcup\limits_{t_0<t<t_1}tA e_d+S\]
 where $(t_0,t_1)\subset(-1,1)$ and $e_d$ is the unit  vector $(0,\dots,0,1)$ in $\R^d$.
 We will show that the sets
 \[V:=T^{-1}(W),\]
 \[V_{(t_0,t_1)}:=T^{-1}(W_{(t_0,t_1)})\]
 satisfy the conditions (1)-(4).
 Notice that, by definition of $W_{(t_0,t_1)}$, we have
 $W_\emptyset=\emptyset$, $W_{(-1,1)}=W$ and for each $-1\leq t_0<t_1\leq 1$ and  $-1\leq t_0'<t_1'\leq 1$, 
     \[W_{(t_0,t_1)}\cap W_{(t_0',t_1')}=W_{(t_0,t_1)\cap (t_0',t_1')}.\]
  So, since  $T$ a bijection, condition (1) is satisfied. 
 
  Next we show that $W\subset Q(0,r)$ and $W_{(0,1)}\subset T(\Omega\cap U)$. Take any $z\in W$,
\[z=(z',z_d)=(y',\mathfrak{f}(y')+At)\]
for some $t\in(-1,1)$ and $y\in S$. If $t=0$, then
\[z=(z',z_d)=(y',\mathfrak{f}(y'))=y\in S\subset T(\bd(\Omega)\cap U)\subset Q(0,r).\]
If $t\neq0$, then
\[z=(y',\mathfrak{f}(y')+At).\]
Since $y\in Q(0,r')$ we have that $y'$ is an element of $Q_{d-1}(0,r')$. Hence, for $i=1,\dots,d-1$,
\[|z_i|=|y_i|\leq r'<r/2.\]
We also have
\[|z_d|=|\mathfrak{f}(y')+tA|\leq|\mathfrak{f}(y')|+|t|A\leq\frac{r}{2}+A\leq\frac{r}{2}+\frac{r}{4}<r,\]
which implies that for all $i=1,\dots,d$ we have $|z_i|<r$ and so $z\in Q(0,r)$. Therefore $W\subset Q(0,r)$. Moreover, since elements of $W_{(0,1)}$ are of the form $(y',\mathfrak{f}(y')+tA)$, where $t>0$, we have that $W_{(0,1)}\subset T(\Omega\cap U)$.
 
  Notice that, since $W_{(0,1)}\subset W$ and  $W_{(0,1)}\subset T(\Omega\cap U)$  we have
 \[V_{(0,1)}=V_{(0,1)}\cap V=T^{-1}(W_{(0,1)})\cap V\subset (\Omega\cap U)\cap V={\Omega}\cap V.\]
On the other hand, if $x\in \Omega\cap V$ then there exist $(y',y_d)=y\in W$ such that $x=T^{-1}(y)$. By the definition of $W$ there exists $(z',\mathfrak{f} (z'))=z\in S$ such that $(y',y_d)=(z', \mathfrak{f}(z')+At)$ for some  $t\in (-1,1)$. Since $x\in\Omega\cap V\subset \Omega\cap U$ we conclude that $\mathfrak{f}(z')+At>\mathfrak{f}(z')$ and so $t>0$. In particular $y=(y',y_d)=(z',\mathfrak{f} (z')+At)\in W_{(0,1)}$ and so $x= T^{-1}(y)\in T^{-1}(W_{(0,1)})= V_{(0,1)}$. Therefore
\[\Omega\cap V\subset V_{(0,1)},\]
and so
\[V_{(0,1)}=V\cap \Omega.\]
Similarly, by the fact that $T$ is an isometry on $\R^d$, we have
 \[W_{(-1,0)}\subset [T(\overline{\Omega}\cap U)]^C=T((\overline{\Omega}\cap U)^C)\]
 Hence  by the fact that $V_{(-1,0)}\subset V$,
 \[V_{(-1,0)}=T^{-1}(W_{(-1,0)})\subset(\overline{\Omega}^C\cup U^C)\cap V=\overline{\Omega}^C\cap V.\]
 Similar argument as in the proof of inclusion $V\cap\Omega\subset V_{(0,1)}$ shows that any  $x\in\overline{\Omega}^C\cap V $ is of the form $x=T^{-1}(y)$ where $y\in W$. Moreover $y=(y',\mathfrak{f} (y')+At)$ for some $t< 0$, hence $x\in V_{(-1,0)} $ and so
 \[\overline{\Omega}^C\cap V\subset V_{(-1,0)}.\]
 Hence 
  \[\overline{\Omega}^C\cap V= V_{(-1,0)}.\]
We have shown that the condition (2) is satisfied.
 
  Since $S\subset T(U\cap\bd(\Omega))$, then $T^{-1}(S)\subset U\cap\bd(\Omega) $. Moreover, $T^{-1}(S)\subset V\subset U$, Hence
\[T^{-1}(S)=V\cap\bd(\Omega)\]
If $1<t_0<0<t_1<1$, then $S\subset W_{(t_0,t_1)}$ and so  $V\cap\bd(\Omega)\subset V_{(t_0,t_1)}$, hence condition (3) is satisfied.

  To show that (4) holds notice that
\[|W_{(t_0,t_1)}|=\int\limits_{Q_{d-1}(0,r')} [\mathfrak{f}(y')+At_1-(\mathfrak{f}(y')-At_0)]dy'=A(2r')^{d-1}|t_1-t_0|,\]
and since $T$ is an isometry we conclude that

\[|V_{(t_0,t_1)}|=|W_{(t_0,t_1)}|=A(2r'^{d-1})|t_1-t_0|,\]
which shows (4).

  Next we show for any $(t_0,t_1)\subset(-1,1)$, $W_{(t_0,t_1)}$ is open. Take any $z_0\in W_{(t_0,t_1)}$, then there exists $y_0\in Q(0,r')$ and $t_0<t<t_1$ such that
\[z_0=(y_0',\mathfrak{f}(y_0')+tA).\]
Take any $\varepsilon>0$ such that $At+2\varepsilon<At_1$ and $At_0<At-2\varepsilon$.
 Since $\mathfrak{f}$ is continuous there exits $s>0$ such that $Q_{d-1}(y_0',s)\subset Q_{d-1} (0,r')$ and for $y'\in Q_{d-1}(y_0',s)$, we have
 \[\mathfrak{f}(y_0')-\varepsilon<\mathfrak{f}(y')<\mathfrak{f}(y_0')+\varepsilon.\]
 Now we define 
 \[U_{z_0}=Q_{d-1}(y_0',s)\times(At-\varepsilon+\mathfrak{f}(y_0'),At+\mathfrak{f}(y_0')+\varepsilon).\]
 Then $U_{z_0}$ is an open cube in $\R^d$ and for any $z\in U_{z_0}$ we have that
 \[z=(y',z_d)\]
 for some $y'\in Q_{d-1}(y_0',s)$ and $z_d\in ( At-\varepsilon+\mathfrak{f}(y_0'),At+\mathfrak{f}(y_0')+\varepsilon)$. Notice that
 \begin{align*}
    At_0&<At-2\varepsilon<At-\varepsilon+\mathfrak{f}(y_0')-\mathfrak{f}(y')<z_d-\mathfrak{f}(y')\\&<At+\mathfrak{f}(y_0)-\mathfrak{f}(y')+\varepsilon<At+2\varepsilon<At_1.
 \end{align*}

  Let $t_{y'}:=\frac{z_d-\mathfrak{f}(y')}{A}$, then $t_{y'}\in(t_0,t_1)$ and
 $z_d=\mathfrak{f}(y')+t_{y'}A$.  Therefore  $z=(y',\mathfrak{f}(y')+t_{y'}A)\in W_{(t_0,t_1)}$ and so $U_{z_0}\subset W_{(t_0,t_1)} $, i.e. $W_{(t_0,t_1)}$ is open.
 
  Let us now define the map $\mathcal{R'}:W\to W$. For $(y',\mathfrak{f}(y')+tA)=(y',y_d)=y\in W$, we define
\begin{align*}
    \mathcal{R'}(y)=\mathcal{R'}(y',y_d)=\mathcal{R'}(y',\mathfrak{f}(y')+tA)=(y',\mathfrak{f}(y')-tA)=(y',2\mathfrak{f}(y')-y_d).
\end{align*}
Notice that for $y\in W$,
\[(\mathcal{R'}\circ\mathcal{R'})(y)=\mathcal{R'}((y',\mathfrak{f}(y')-tA))=(y',\mathfrak{f}(y')+tA)=y.\]
 Hence $\mathcal{R'}\circ\mathcal{R'}=Id_W$, and for ${(t_0,t_1)}\subset(-1,1)$ we have
 \[\mathcal{R'}(W_{(t_0,t_1)})=W_{(-t_1,-t_0)}.\]
 For $y\in W$, if we write
 \[\mathcal{R'}(y)=(\varrho'_1(y),\dots,\varrho'_d(y)),\]
 then we have 
 \[\varrho'_i(y)=y_i,\]
 for $i=1,\dots,d-1$ and for $i=d$, \[\varrho'_d(y)=2\mathfrak{f}(y')-y_d.\] Since $\mathfrak{f}$ is of class $C^1(Q_{d-1}(0,r))$, its restriction to $Q_{d-1}(0,r')$ is also of class $C^1(Q_{d-1}(0,r'))$ and for each $i=1,\dots,d$ we have
 \begin{align}\label{ograniczenie pochdnych f na kostce r'}
     \sup_{y'\in Q_{d-1}(0,r')}|\partial_i \mathfrak{f}(y)|<C_{r'}
 \end{align}
 for some $C_{r'}>0$. Moreover, 
 \begin{align}\label{partial va}
 \partial_j\varrho'_i=\delta_{ij}
 \end{align}
for $i=1,\dots,d-1$, $j=1,\dots,d-1$, 
 \[\partial_d \varrho_i'=0\]
 for $i=1,\dots,d-1$,
 \[\partial_j\varrho'_d=2\partial_j \mathfrak{f}\]
 for $j=1,\dots,d-1$, finally
 \[\partial_d\varrho_d=-1.\]
 Hence, $\mathcal{R}'$ is of class $C^1(W,W)$, and for each $y\in W$,
 \begin{align}\label{Jacobian of R'}
  |\det J_{\mathcal{R'}(y)}|=1.   
 \end{align}

   Finally we define $\mathcal{R}:V \to V$ as
\[\mathcal{R}=T^{-1} \circ \mathcal{R'} \circ T.\]
Notice that $\mathcal{R}$ together with the set $V$ and the family $V_{(t_1,t_2)}$ satisfy conditions (1)-(7) and since $T$ is a composition of a translation and a rotation, the condition (8) is also satisfied.
\end{proof}   

\begin{lemma}\label{local extension for regular domains}
    Let $(W,\{W_{(t_0,t_1)}\}_{(t_0,t_1)\subset (-1,1)},\mathcal{R'})$ be the triple from Theorem \ref{reflection property}. Let $g$ be a function in $W^{1,1}(W_{(0,1)})$. Then the function $\widetilde{g}$ defined by the formula

\[\widetilde{g}(y)=\begin{cases}g(y) & y\in W_{(0,1)}\\
0 & y\in W\setminus (W_{(-1,0)}\cup W_{(0,1)})\\
g(\mathcal{R}'(y)) & y\in W_{(-1,0)}
\end{cases}\]
is an element of $W^{1,1}(W)$ and for a.a $y=(y',y_d)\in W$ we have

\[\partial_i\widetilde{g}(y)=\begin{cases}
\partial_i g(y) & y\in W_{(0,1)}, \text{ } i=1,\dots, d \\
(\partial_i g)(\mathcal{R}'(y))+ 2(\partial_d g)(\mathcal{R}'(y))\partial_i \mathfrak{f}(y') & y\in W_{(-1,0)} \text{ } i=1,\dots, d-1 \\
-(\partial_d g)(\mathcal{R}'(y)) & y\in W_{(-1,0)}, \ i=d.
\end{cases}
\] 
\end{lemma}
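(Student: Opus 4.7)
The plan is to verify the claimed identity for $\partial_i \tilde g$ in two stages: first establish that, on each of the two open halves $W_{(0,1)}$ and $W_{(-1,0)}$, the restriction of $\tilde g$ is a bona fide $W^{1,1}$ function whose classical/weak partial derivatives are precisely the formulas stated in the lemma; then glue the two halves across the $C^1$ hypersurface $S$, using the identity $\mathcal{R}'|_S = \mathrm{Id}_S$ (condition (6) of Theorem \ref{reflection property}) to eliminate boundary contributions. Note that $W \setminus (W_{(-1,0)} \cup W_{(0,1)}) = S$ has $d$-dimensional Lebesgue measure zero, so the three-piece formula defines $\tilde g$ (and the proposed $h_i$) a.e.\ on $W$.

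On $W_{(0,1)}$, there is nothing to do since $\tilde g = g$. On $W_{(-1,0)}$, observe that $\mathcal{R}'$ is a $C^1$ diffeomorphism onto $W_{(0,1)}$ with unit Jacobian (\ref{Jacobian of R'}). Therefore Theorem \ref{Integration by substitution formula} yields $\tilde g = g \circ \mathcal{R}' \in W^{1,1}(W_{(-1,0)})$ together with
\[
\partial_i(g \circ \mathcal{R}')(y) = \sum_{j=1}^d (\partial_j g)(\mathcal{R}'(y))\, \partial_i \varrho'_j(y),
\]
and substituting the values of $\partial_j \varrho'_i$ computed in (\ref{partial va}) and the lines following it reproduces exactly the piecewise formulas in the statement. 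The uniform bound (\ref{ograniczenie pochdnych f na kostce r'}) on $\partial_i \mathfrak{f}$ on $Q_{d-1}(0,r')$, combined with the unit-Jacobian change of variables, gives the $L^1(W)$-integrability of $\tilde g$ and of each proposed $h_i$.

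To glue the halves, I will show, for every $u \in C_C^\infty(W)$ and every $i$, the identity
\[
\int_W \tilde g(y)\, \partial_i u(y)\, dy = -\int_W h_i(y)\, u(y)\, dy.
\]
The cleanest route is approximation: by standard density theory on the Lipschitz domain $W_{(0,1)}$ (its boundary consists of a portion of $S$, a translate of $S$ along $e_d$, and lateral cuboid faces, with all curved pieces $C^1$ by the regularity of $\mathfrak{f}$ and (\ref{ograniczenie pochdnych f na kostce r'})), pick $g_n \in C^\infty(\overline{W_{(0,1)}})$ with $g_n \to g$ in $W^{1,1}(W_{(0,1)})$. Define $\tilde g_n$ by the same three-piece formula. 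Since $\mathcal{R}'|_S = \mathrm{Id}_S$ and $g_n$ is continuous up to $S$, the function $\tilde g_n$ is continuous on $W$ and $C^1$ on each closed half. For such a function, splitting $\int_W \tilde g_n\, \partial_i u\, dy$ into its contributions from $W_{(0,1)}$ and $W_{(-1,0)}$, applying classical integration by parts on each half, and observing that the two resulting surface integrals over $S$ cancel---because the outward normals to the two halves along $S$ point oppositely while $\tilde g_n$ takes the same value from either side---establishes the identity for $g_n$.

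Finally, the change-of-variables estimate $\|g_n \circ \mathcal{R}' - g \circ \mathcal{R}'\|_{L^1(W_{(-1,0)})} = \|g_n - g\|_{L^1(W_{(0,1)})}$ together with the analogous bound on the derivatives (using (\ref{ograniczenie pochdnych f na kostce r'}) to control the factor $\partial_i \mathfrak{f}$) gives $\tilde g_n \to \tilde g$ and $h_{i,n} \to h_i$ in $L^1(W)$, where $h_{i,n}$ denotes the piecewise derivative associated with $g_n$. Passing to the limit in the identity established for $g_n$ yields it for $g$, and Lemma \ref{characterization of weak differentiability} then certifies $\tilde g \in W^{1,1}(W)$ with the stated weak derivatives. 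The main obstacle I anticipate is the careful bookkeeping of the $S$-boundary integrals for the smooth approximants; this cancellation is dictated entirely by $\mathcal{R}'|_S = \mathrm{Id}_S$ and would fail without condition (6) of Theorem \ref{reflection property}.
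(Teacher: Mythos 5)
Your proposal is correct, and it takes a genuinely different route from the paper. The paper proves the identity
$\int_W \widetilde{g}\,\partial_i u = -\int_W \partial_i\widetilde{g}\, u$
for $u\in C_C^1(W)$ by constructing explicit one-sided cutoffs $F_{\varepsilon,1}$, $F_{-\varepsilon,\varepsilon}$, $F_{-1,-\varepsilon}$ (piecewise-polynomial profiles across $S$), decomposing $u=u_{1,\varepsilon}+u_{2,\varepsilon}+u_{3,\varepsilon}$ with supports in $W_{(\varepsilon/2,1)}$, $\overline{W_{(-3\varepsilon,3\varepsilon)}}$, $W_{(-1,-\varepsilon/2)}$, and then showing by a Mean Value Theorem argument, the antisymmetry relations such as $\partial_d F_{\varepsilon,1}(\mathcal{R}'(y))=-\partial_d F_{-1,-\varepsilon}(y)$, and the derivative bounds $\frac{15}{A\varepsilon}$, that the middle pairing $I_2(\varepsilon)$ vanishes as $\varepsilon\to 0^+$, with dominated convergence handling $I_1,I_3$. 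You instead (i) establish the formulas on each open half exactly as the paper does, via Theorem \ref{Integration by substitution formula}; (ii) approximate $g$ by $g_n\in C^\infty(\overline{W_{(0,1)}})$ using the classical density theorem for $W^{1,1}$ on a domain with the segment property (Adams--Fournier Theorem 3.22); (iii) for each smooth $g_n$, exploit the continuity of $\widetilde{g_n}$ across $S$ (which indeed comes from $\mathcal{R}'|_S=\mathrm{Id}_S$) and the opposing outward normals to cancel the two surface integrals over $S$ after classical integration by parts on each half; and (iv) pass to the $L^1$ limit using the unit Jacobian and the bound on $\partial_i\mathfrak{f}$. Both approaches are sound. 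Yours is shorter and conceptually transparent, at the cost of invoking the classical density theorem as a black box (and implicitly needing the observation that $W_{(0,1)}$ has the segment property, which holds since its boundary consists of two $C^1$ graphs with bounded slope and flat lateral faces, but which deserves a sentence). The paper's approach is longer and relies on explicit, somewhat delicate cutoff constructions, but is entirely self-contained: it never appeals to density of boundary-regular smooth functions on the half-domain $W_{(0,1)}$, which keeps the development of the main theorem independent of the classical Sobolev density theory that the article is, in a sense, re-proving in the Musielak--Orlicz setting.
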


\begin{proof}
  By assumption, the triple  $(W,\{W_{(t_0,t_1)}\}_{(t_0,t_1)\subset (-1,1)},\mathcal{R'})$ satisfies the conclusions of the Theorem \ref{reflection property}, in particular the sets $W_{(t_0,t_1)}$ are open for each $(t_0,t_1)\subset (-1,1)$ and of the form
    \[W_{(t_0,t_1)}=\bigcup\limits_{t\in (t_0,t_1)}S+Ate_d,\]  
    where $A>0$ is some number and 
          \[S=\{(x,\mathfrak{f}(x))\in \R^d: x\in Q_{d-1}(0,r')\},\]
where $\mathfrak{f}$ is of class $C^1(Q_{d-1}(0,r))$ such that, for some $0<r'<r$ and any $i=1,\dots, d-1$, \[\sup\limits_{x\in Q_{d-1}{0,r')}} |\partial_i\mathfrak{f}(x)|<\infty.\]

   Take any $0<\varepsilon<1/3$, and for $(y',y_d)=y\in W$ define

 \[F_{\varepsilon,1}(y)=\int\limits_{-\infty}^{y_d} \frac{15}{16A^5\varepsilon^5}(\mathfrak{f} (y')+A\varepsilon-t)^2(\mathfrak{f}(y')+3A\varepsilon-t)^2\chi_{[\mathfrak{f}(y')+A\varepsilon, \mathfrak{f}(y')+3A\varepsilon]}(t)dt,\]

\[F_{-1,-\varepsilon}(y)=\int\limits_{y_d}^\infty\frac{15}{16A^5\varepsilon^5}(\mathfrak{f}(y')-A\varepsilon-t)^2(\mathfrak{f}(y')-3A\varepsilon-t)^2\chi_{[\mathfrak{f}(y')-3A\varepsilon, \mathfrak{f}(y')-A\varepsilon]}(t)dt,\]
   \[F_{-\varepsilon,\
 \varepsilon}(y)= \chi_W(y)- F_{\varepsilon,1}(y)-F_{-1,-\varepsilon}(y). \] 
Clearly, $F_{\varepsilon,1}$ and $F_{-1,-\varepsilon}$ are non-negative functions. Moreover, by the Fundamental Theorem of Calculus, for any $(y',y_d)=y\in W$  the function $t\mapsto F_{\varepsilon,1}(y',t)$ is non-decreasing and $t\mapsto F_{-1,-\varepsilon}(y',t)$ is non-increasing function, where $t\in (-A,A)$. Moreover we have
\begin{align}\label{range of F,1,varepsilon,1}
    F_{\varepsilon,1}: W\to [0,1].
\end{align}
 Indeed,
take any $(y',y_d)=y\in W_{(\varepsilon,1)}$. By definition of $W_{(\varepsilon,1)}$ we have that $y_d< \mathfrak{f}(y')+A\varepsilon $, and so
\[F_{\varepsilon,1}(y)=\int\limits_{-\infty}^{y_d} \frac{15}{16A^5\varepsilon^5}(\mathfrak{f}(y')+A\varepsilon-t)^2(\mathfrak{f}(y')+3A\varepsilon-t)^2\chi_{[\mathfrak{f}(y')+A\varepsilon, \mathfrak{f}(y')+3A\varepsilon]}(t)dt=0.\]
For $(y', y_d)=y\in W$ such that $\mathfrak{f}(y')+A\varepsilon\leq y_d\leq \mathfrak{f}(y')+3A\varepsilon$, i.e. $y\in W\setminus \left( W_{(-1,\varepsilon)}\cup W_{(3\varepsilon,1)}\right)$, we have that
\begin{align*}
   & F_{\varepsilon,1}(y) =\int\limits_{-\infty}^{y_d} \frac{15}{16A^5\varepsilon^5}(\mathfrak{f}(y')+A\varepsilon-t)^2(\mathfrak{f}(y')+3A\varepsilon-t)^2\chi_{[\mathfrak{f}(y')+A\varepsilon, \mathfrak{f}(y')+3A\varepsilon]}(t)dt
   \\&=\int\limits_{\mathfrak{f}(y')+A\varepsilon}^{y_d}\frac{15}{16A^5\varepsilon^5}(\mathfrak{f}(y')+A\varepsilon-t)^2(\mathfrak{f}(y')+3A\varepsilon-t)^2dt 
  \end{align*} 
 By substitution $u=\mathfrak{f}(y')+A\varepsilon-t$,
 \begin{align*}
   &F_{\varepsilon,1}(t)=-\frac{15}{16 A^5\varepsilon^5}\int\limits_{0}^{\mathfrak{f}(y')+A\varepsilon-y_d}t^2(t+2A\varepsilon)^2dt=\frac{15}{16 A^5\varepsilon^5}\int\limits_{\mathfrak{f}(y')+A\varepsilon-y_d}^0t^2(t+2A\varepsilon)^2dt
   \\&=\frac{15}{16 A^5\varepsilon^5}\left(\frac{(y_d-\mathfrak{f}(y')-A\varepsilon)^5}{5}-\frac{4A\varepsilon(\mathfrak{f}(y')+A\varepsilon-y_d)^4}{4}-\frac{4A^2\varepsilon^2(\mathfrak{f}(y')+A\varepsilon-y_d)^3}{3}\right)
   \\&=\frac{-\left(\mathfrak{f}(y')+A\varepsilon-y_d \right)^3\left(3(\mathfrak{f}(y')+A\varepsilon-y_d)^2+15A\varepsilon(\mathfrak{f}(y')+A\varepsilon-y_d)+20A^2\varepsilon^2\right)}{16A^5\varepsilon^5}.
\end{align*}
Notice that also, for $y=(y',\mathfrak{f}(y+3A\varepsilon)\in W$ we have
\[F_{\varepsilon,1}(y)=\frac{-1}{16A^5\varepsilon^5}(-2A\varepsilon)^3(3(-2A\varepsilon)^2+15A\varepsilon(-2A\varepsilon)+20A^2\varepsilon^2)=1.\]
Now, if $(y',y_d)\in W$ and $y_d>\mathfrak{f}(y')+3A\varepsilon$ we have that
\begin{align*}
    F_{\varepsilon,1}(y) &=\int\limits_{-\infty}^{y_d} \frac{15}{16A^5\varepsilon^5}(\mathfrak{f}(y')+A\varepsilon-t)^2(\mathfrak{f}(y')+3A\varepsilon-t)^2\chi_{[\mathfrak{f}(y')+A\varepsilon, \mathfrak{f}(y')+3A\varepsilon]}(t)dt= \\&=\int\limits_{\mathfrak{f}(y')+A\varepsilon}^{\mathfrak{f}(y')+3A\varepsilon}\frac{15}{16A^5\varepsilon^5}(\mathfrak{f}(y')+A\varepsilon-t)^2(\mathfrak{f}(y')+3A\varepsilon-t)^2dt= 1.
\end{align*}
And so the function $F_{\varepsilon,1}$ is of the form
\begin{align}\label{wzor na F,1,varepsilon}
 F_{\varepsilon,1}(y)=\begin{cases}
    0 &  y\in W_{(-1,\varepsilon)} \\
    \frac{\left(y_d-\mathfrak{f}(y')-A\varepsilon \right)^3\left(3(\mathfrak{f}(y')+A\varepsilon-y_d)^2+15A\varepsilon(\mathfrak{f}(y')+A\varepsilon-y_d)+20A^2\varepsilon^2\right)}{16A^5\varepsilon^5} & y\in W\cap \overline{W_{(\varepsilon,3\varepsilon)}} \\
    1 & y\in W_{(3\varepsilon,1)}.
 \end{cases}   
\end{align}
Now we can also see that
\begin{align}\label{support F,1,varepsilon}
    \esssupp F_{\varepsilon,1}=\overline{ W_{(\varepsilon,1)}}.
\end{align}
 Indeed, if $y\in \overline{W_{(\varepsilon,1)}}$, then there exists a sequence $\{y_k\}_{k=1}^\infty\subset W_{(\varepsilon,1)}$ such that $\lim\limits_{k\to \infty} y_k=y$. By the formula (\ref{wzor na F,1,varepsilon}) we have that, for each $k\in \N$, $F_{\varepsilon,1}(y_k)\neq 0$, hence $y_k\in \esssupp F_{\varepsilon,1}$. But $\esssupp F_{1,\varepsilon}$ is a closed set and so $\overline{ W_{(\varepsilon,1)}}\subset \esssupp F_{1,\varepsilon}$. Similarly, if $y\in \esssup F_{\varepsilon,1}$, there exists a sequence of points $\{y_k\}_{k=1}^\infty$, such that, for each $k\in\N$ we have $F_{\varepsilon,1}(y_k)> 0$ and $\lim\limits_{k\to \infty} y_k=y$. Inspection of the formula (\ref{wzor na F,1,varepsilon}) shows that $F_{\varepsilon,1}(z)\neq 0$ only for $z \in W_{(\varepsilon,1)}$. Therefore, we conclude that $\{y_k\}_{k=1}^\infty\subset W_{(1,\varepsilon)}$ and so we conclude that $y_k\in \overline{W_{(0,1)}}$.

  Similar reasoning to the one above  shows that for $y\in W$ we have
\begin{align}\label{wzor na F,-1,-varepsilon}
    F_{-1,-\varepsilon}(y)=\begin{cases}
       1 & y\in W_{(-1,-3\varepsilon)} \\
    \frac{\left(\mathfrak{f}(y')-A\varepsilon-y_d \right)^3\left(2(\mathfrak{f}(y')+A\varepsilon+y_d)^2+15A\varepsilon(y_d-\mathfrak{f}(y')+A\varepsilon)+20A^2\varepsilon^2\right)}{16A^5\varepsilon^5} & y\in W\cap \overline{W_{(-3\varepsilon,-\varepsilon)}} \\
    0  & y\in W_{(-\varepsilon,1)},
    \end{cases}
\end{align}
and
\begin{align}\label{range F,-1,-varepslion}
 \  F_{-1,-\varepsilon}: W\to [0,1],
\end{align}
\begin{align}\label{support F,-1,-varepsion}
    \esssupp F_{-1,-\varepsilon}=\overline{W_{(-1,-\varepsilon)}}.
\end{align}
Since $F_{-\varepsilon,\varepsilon}=\chi_W-F_{\varepsilon,1}- F_{-1,-\varepsilon}$, by the fact that supports of $F_{\varepsilon,1}$ and $F_{-1,-\varepsilon}$ are disjoint (see (\ref{support F,1,varepsilon}) and (\ref{support F,-1,-varepsion}))  and the fact that ranges of both $F_{\varepsilon,1}$ and $F_{-1,-\varepsilon}$ are equal to $[0,1]$ (see (\ref{range of F,1,varepsilon,1}) and (\ref{range F,-1,-varepslion})) we have that
\begin{align}\label{range of F,-varepsilon,varepsilon}
F_{-\varepsilon,\varepsilon}:W\to [0,1].    
\end{align}
Examining the formulas (\ref{wzor na F,1,varepsilon}) and (\ref{wzor na F,-1,-varepsilon}) we see that $F_{-\varepsilon,\varepsilon}(z)\neq 0$ only for $z\in W_{(-3\varepsilon,3\varepsilon)}$. Hence, reasoning as in the case of $F_{\varepsilon,1}$ we deduce that 
 \begin{align*}
     \esssupp F_{-\varepsilon,\varepsilon}= \overline{W_{(-3\varepsilon,3\varepsilon)}}.
 \end{align*}
 Moreover we also see that,
 for $y\in W_{(-\varepsilon,\varepsilon)}$, we have
 \begin{align*}
     F_{-\varepsilon,\varepsilon}(y)=1.
 \end{align*}
 Notice that, by the Fundamental Theorem of Calculus and definitions of $F_{\varepsilon,1}$, $F_{-1,-\varepsilon}$, for $(y,y_d)=y\in W$ we have
 \begin{align}\label{d partial of F_1}
\partial_d F_{\varepsilon,1}(y)=\frac{15}{16A^5\varepsilon^5}(\mathfrak{f}(y')+A\varepsilon -y_d)^2(\mathfrak{f}(y')+3A\varepsilon-y_d)^2\chi_{[\mathfrak{f}(y')+A\varepsilon, \mathfrak{f}(y')+3A\varepsilon]}(y_d),
\end{align}

and 
 \begin{align*}
\partial_d F_{-1,-\varepsilon}(y)=\frac{-15}{16A^5\varepsilon^5}(\mathfrak{f}(y')-A\varepsilon -y_d)^2(\mathfrak{f}(y')-3A\varepsilon-y_d)^2\chi_{[\mathfrak{f}(y')-3A\varepsilon, \mathfrak{f}(y')-A\varepsilon]}(y_d).
\end{align*}  
Take any $y\in W$ and if we set $a= \mathfrak{f}(y')$, $b=A\varepsilon$ and $x=y_d$, then \[\partial_d F_{\varepsilon,1}(y)=\frac{15}{16b^5}(a+b-x)^2(a+3b-x)^2\chi_{[a+b,a+3b]}(x).\]
Hence, for a fixed $a$ and $b$ the supremum over $x$ of the right side of the equation above is attained at $x=a+2b$ and is equal to $\frac{15}{b}$. And so, for a fixed $y'\in Q_{d-1}(0,r')$, we have that
\[\sup\limits_{\mathfrak{f}(y')-A<y_d<\mathfrak{f}(y')+A}\partial_d F_{\varepsilon,1}(y',y_d)=\frac{15}{A\varepsilon}.\]
Consequently,
\begin{align}\label{bound for the derivastive of F}
    \sup\limits_{y\in W}\partial_d F_{\varepsilon,1}(y)=\sup\limits_{y'\in Q_{d-1}(0,r')}\left(\sup\limits_{\mathfrak{f}(y')-A<y_d<\mathfrak{f}(y')+A}\partial_d F_{\varepsilon,1}(y',y_d)\right)=\frac{15}{A\varepsilon}.
\end{align}
Now for any $(y',t)\in Q_{d-1}(0,r')\times \R$ define
\begin{align*}
 K_{\varepsilon,1}(y',t)=\frac{15}{16A^5\varepsilon^5}(\mathfrak{f}(y')+A\varepsilon -t)^2(\mathfrak{f}(y')+3A\varepsilon-t)^2\chi_{[\mathfrak{f}(y')+A\varepsilon, \mathfrak{f}(y')+3A\varepsilon]}(t),   
\end{align*}
and notice that for $t\in \R$ and $y'\in Q_{d-1}(0,r')$, we have
\begin{align*}
\chi_{[\mathfrak{f}(y')+A\varepsilon, \mathfrak{f}(y')+3A\varepsilon]}(t)=\chi_{[t-3A\varepsilon, t-A\varepsilon]}(\mathfrak{f}(y')).
\end{align*}
Fix any $t\in \R$ and take any $y'\in Q_{d-1}(0,r')$. First assume that $\mathfrak{f}(y')\in (-\infty, t-3A\varepsilon)\cup(t-A\varepsilon,\infty)$, then for $i=1,\dots, d-1$ we have
\[\partial_i K_{\varepsilon,1}(y',t)=0.\]
If now $\mathfrak{f}(y')\in(t-3A\varepsilon,t-A\varepsilon)$, then
\begin{align*}
 \partial_i K_{\varepsilon,1}(y',t)&=\frac{15}{4A^5\varepsilon^5}\partial_i \mathfrak{f}(y')(\mathfrak{f}(y')+A\varepsilon -t)(\mathfrak{f}(y')+3A\varepsilon-t)(\mathfrak{f}(y')+2A\varepsilon-t)\\&\cdot\chi_{[\mathfrak{f}(y')+A\varepsilon, \mathfrak{f}(y')+3A\varepsilon]}(t),    \end{align*}
Now consider the case $\mathfrak{f}(y')=t-3A\varepsilon$. Let $i=1,\dots,d-1$ and take any $h\in \R$ such that $y'+he_i\in Q_{d-1}(0,r')$ we have
\begin{align*}
    \frac{K_{\varepsilon,1}(y'+he_i,t)-K_{\varepsilon,1}(y',t)}{h}&=\frac{K_{\varepsilon,1}(y'+he_i,\mathfrak{f}(y')+3A\varepsilon)-K_{\varepsilon,1}(y',\mathfrak{f}(y')+3A\varepsilon)}{h}\\&=
    \frac{K_{\varepsilon,1}(y'+he_i,\mathfrak{f}(y')+3A\varepsilon)}{h}\\&=
    \frac{15}{16A^5\varepsilon^5h}(\mathfrak{f}(y'+he_i)-\mathfrak{f}(y')-2A\varepsilon)^2\\&\cdot(\mathfrak{f}(y'+he_i)-\mathfrak{f}(y'))^2\chi_{[\mathfrak{f}(y'), \mathfrak{f}(y')+2A\varepsilon]}(\mathfrak{f}(y'+he_i)).
\end{align*}
By the Mean Value Theorem, for some $\theta\in [0,1]$ we have
\begin{align*}
   \frac{K_{\varepsilon,1}(y'+he_i,t)-K_{\varepsilon,1}(y',t)}{h}&=
   \frac{15}{16A^5\varepsilon^5h}(\partial_i \mathfrak{f}(y'+\theta he_i)h-2A\varepsilon)^2(\partial_i \mathfrak{f}(y'+\theta he_i)h)^2\\&\cdot\chi_{[\mathfrak{f}(y'), \mathfrak{f}(y')+2A\varepsilon]}(\mathfrak{f}(y')+\partial_i\mathfrak{f}(y'+\theta he_i)h).
\end{align*}
Thus, for small enough $h$, if $\partial_i\mathfrak{f}(y'+\theta he_i)h>0$ we have that
\[\frac{K_{\varepsilon,1}(y'+he_i,t)-K_{\varepsilon,1}(y',t)}{h}= \frac{15}{16A^5\varepsilon^5}(\partial_i \mathfrak{f}(y'+\theta he_i)h-2A\varepsilon)^2(\partial_i \mathfrak{f}(y'+\theta he_i)h)^2\]
and if $\partial_i\mathfrak{f}(y'+\theta he_i)h\leq 0$ we have
\[\frac{K_{\varepsilon,1}(y'+he_i,t)-K_{\varepsilon,1}(y',t)}{h}=0.\]
Finally, for small enough $h$, we have
\begin{align*}
    \left| \frac{K_{\varepsilon,1}(y'+he_i,t)-K_{\varepsilon,1}(y',t)}{h}\right|\leq \left|\frac{15}{16A^5\varepsilon^5}(\partial_i \mathfrak{f}(y'+\theta he_i)h-2A\varepsilon)^2(\partial_i \mathfrak{f}(y'+\theta he_i)h)^2 \right|.
\end{align*}
 By the assumption  $\sup\limits_{x\in Q_{d-1}(0,r')}|\partial_i \mathfrak{f}(x)|<\infty $, we have for $i=1,\dots,d-1$,
\[\partial_i K_{\varepsilon,1}(y',t)=\lim\limits_{h\to 0}\frac{K_{\varepsilon,1}(y'+he_i,t)-K_{\varepsilon,1}(y',t)}{h}=0.\]
Similarly, if $\mathfrak{f}(y')=t-A\varepsilon$ we have
\[\partial_i K_{\varepsilon,1}(y',t)=0.\] 
From this we conclude that the function $K_{\varepsilon,1}$ has continuous partial derivatives, for $i=1,\dots,d-1$, on $Q_{d-1}(0,r')\times \R$, given by the formula
\begin{align*}
    \partial_i K_{\varepsilon,1}(y',t)&=\frac{15}{4A^5\varepsilon^5}\partial_i \mathfrak{f}(y')(\mathfrak{f}(y')+A\varepsilon -t)(\mathfrak{f}(y')+3A\varepsilon-t)(\mathfrak{f}(y')+2A\varepsilon-t)\\&\cdot\chi_{[\mathfrak{f}(y')+A\varepsilon, \mathfrak{f}(y')+3A\varepsilon]}(t).
\end{align*}
 Hence, by the Leibniz Integral Rule \cite[Theorem 2.27, p.56]{Folland}  we have that
 \begin{align*}
     \partial_i F_{\varepsilon,1}(y)&=\int\limits_{-\infty}^{y_d}\partial_i K_{\varepsilon,1}(y',t)dt=
    \int\limits_{-\infty}^{y_d}\frac{15}{4A^5\varepsilon^5}\partial_i \mathfrak{f}(y')(\mathfrak{f}(y')+A\varepsilon -t)(\mathfrak{f}(y')+3A\varepsilon-t)\\&\cdot (\mathfrak{f}(y')+2A\varepsilon-t)\chi_{[\mathfrak{f}(y')+A\varepsilon, \mathfrak{f}(y')+3A\varepsilon]}(t)dt\\&=
    \frac{-15}{16A^5\varepsilon^5}\partial_i \mathfrak{f}(y')(\mathfrak{f}(y')+A\varepsilon -y_d)^2(\mathfrak{f}(y')+3A\varepsilon-y_d)^2\chi_{[\mathfrak{f}(y')+A\varepsilon, \mathfrak{f}(y')+3A\varepsilon]}(y_d)\\&=-\partial_{i}\mathfrak{f}(y')K_{\varepsilon,1}(y),
 \end{align*}
 and so, by (\ref{d partial of F_1}), for $i=1,\dots,d-1$,
 \begin{align}\label{1.21}
  \partial_i F_{\varepsilon,1}(y)=   -\partial_i \mathfrak{f}(y')\partial_d F_{\varepsilon,1}(y).
\end{align}
Arguing similarly, we can show that
  \begin{align}\label{costam}
\partial_i F_{-1,-\varepsilon}(y)=-\partial_i \mathfrak{f}(y')\partial_d F_{-1,-\varepsilon}(y).
 \end{align}
 Since any $y\in W$ can be written as $y=(y',y_d)=(y',\mathfrak{f}(y')+At)$ for some $t\in (-1,1)$, and since 
 \[\mathcal{R}'(y)=(y',\mathfrak{f}(y')-At)=(y',2\mathfrak{f}(y')-y_d),\] 
we have that
\begin{align*}
    \partial_d F_{\varepsilon,1}(\mathcal{R}'(y))&=\frac{15}{16A^5\varepsilon^5}(-\mathfrak{f}(y')+A\varepsilon+y_d)^2(-\mathfrak{f}(y')+3A\varepsilon+y_d)^2\\&\cdot\chi_{[\mathfrak{f}(y')+A\varepsilon, \mathfrak{f}(y')+3A\varepsilon]}(2\mathfrak{f}(y')-y_d))\\&
    =\frac{15}{16A^5\varepsilon^5}(\mathfrak{f}(y')-A\varepsilon -y_d)^2(\mathfrak{f}(y')-3A\varepsilon-y_d)^2\chi_{[\mathfrak{f}(y')-3A\varepsilon, \mathfrak{f}(y')-A\varepsilon]}(y_d)\\&=-\partial_d F_{-1,-\varepsilon}(y).
\end{align*}
We conclude that for any $y\in W$,
\begin{align}\label{rownosc w odbiciu dla d}
    \partial_d F_{\varepsilon,1}(\mathcal{R}'(y))=-\partial_d F_{-1,-\varepsilon}(y),
\end{align}
and for $i=1,\dots d-1$, by (\ref{1.21}), (\ref{costam}), (\ref{rownosc w odbiciu dla d}) we have

\begin{align}\label{rownosc w odbiciu dla i}
     \partial_i F_{\varepsilon,1}(\mathcal{R}'(y))=-\partial_i \mathfrak{f}(y')\partial_d F_{\varepsilon,1}(\mathcal{R'}(y))=\partial_i \mathfrak{f}(y')\partial_d F_{-1,-\varepsilon}(y)=  - \partial_i F_{-1,-\varepsilon}(y).
\end{align}
From the above equalities, the fact that $\mathcal{R'}(W_{(\varepsilon,1)})=W_{(-1,-\varepsilon)}$, and by (\ref{bound for the derivastive of F}) we conclude that
  \begin{align}\label{d partial of F}
    \sup\limits_{y\in W} |\partial_d F_{\varepsilon,1}(y)|=\sup\limits_{y\in W}  |\partial_d F_{-1,-\varepsilon}(y)|=\frac{15}{A\varepsilon}.
\end{align}
and for $i=1,\dots, d-1$, by the assumption $\sup\limits_{x\in Q_{d-1}(0,r')}|\partial_i \mathfrak{f}(x)|<\infty $ and (\ref{1.21}), the fact that any $y\in W$ can be written as $y=\mathcal{R}'(\mathcal{R}'(y))$ and (\ref{rownosc w odbiciu dla i}) we have
 \begin{align}\label{i partial of F}
   \sup\limits_{y\in W}  |\partial_i F_{-1,-\varepsilon}(y)|= \sup\limits_{y\in W} |\partial_i F_{\varepsilon,1}(y)|=\sup\limits_{y\in W}|\partial_i\mathfrak{f}(y')||\partial_d F_{\varepsilon,1}(y)| \leq\frac{15}{A\varepsilon}\sup_{x\in Q_{d-1}(0,r')}|\partial_i \mathfrak{f}(x)|<\infty.
 \end{align}
We use the function $F_{\varepsilon,1}$, $F_{-\varepsilon,\varepsilon}$ and $F_{-1,-\varepsilon}$ to partition any function form $C_C^1(W)$ into $C_C^1(W)$ functions with supports that will be placed conveniently placed in $W$. 
Take any $u\in C^1_C(W)$ and $i=1,\dots, d$ and $0<\varepsilon<\frac{1}{3}$ and define the functions 
\begin{align}\label{definicja u1}
  u_{1,\varepsilon}= F_{\varepsilon,1}u,  
\end{align}
\begin{align}\label{definicja u2}
    u_{2,\varepsilon}= F_{-\varepsilon,\varepsilon}u,
\end{align}
\begin{align}\label{definicja u3}
    u_{3,\varepsilon}=F_{-1,-\varepsilon}u.
\end{align}

  By definitions of functions $F_{\varepsilon,1}$, $F_{-\varepsilon,\varepsilon}$, $F_{-1,-\varepsilon}$, we conclude that 
\[u_{1,\varepsilon}+u_{2,\varepsilon}+u_{3,\varepsilon}=u.\]
Notice now, that $u_{1,\varepsilon}(y)\neq0$ only if both $u(y)\neq 0$ and $F_{\varepsilon,1}(y)\neq 0$, therefore if $u_{1,\varepsilon}(y)\neq 0$, by formula (\ref{wzor na F,1,varepsilon}), we have that $y\in W_{(1,\varepsilon)}$. Hence $\esssupp u_{1,\varepsilon}\subset \overline{W_{(\varepsilon,1)}}\cap\esssupp u$. And since $\esssupp u$ is a compact subset of $W$, we have that $\esssupp u\cap \bd (W)$ is empty and so 
\begin{align}\label{support u_1}
   \esssupp u_{1,\varepsilon}\subset \overline{W_{(\varepsilon,1)}}\setminus (\bd(W)\cap\esssupp u). 
\end{align}

Recall that
\[W_{(\varepsilon,1)}=\bigcup\limits_{t\in (\varepsilon,1)} S+Ate_d, \]
where $S$ is the graph of $\mathfrak{f}$, 
\[S=\{(y',y_d)\in \R^d: y'\in Q_{d-1}(0,r'), y_d=\mathfrak{f}(y')\}.\]
Since $\mathfrak{f}$ is a bounded continuous function on $Q_{d-1}(0,r')$ it can be uniquely extended to a continuous function $\overline{\mathfrak{f}}$ on $\overline{Q_{d-1}(0,r')}$, such that $\overline{\mathfrak{f}}(x)=\mathfrak{f}(x)$ for $x\in Q_{d-1}(0,r')$. If $\{y_k\}_{k=1}^\infty=\{(y'_k,\mathfrak{f}(y'_k))\}_{k=1}^\infty$ is a sequence of points from $S$ converging to some point $(y',y_d)$, then $y'\in \overline{Q_{d-1}(0,r')}$ and $y_d=\lim\limits_{k\to\infty} \mathfrak{f}(y_k')=\lim\limits_{k\to\infty} \overline{\mathfrak{f}}(y_k')=\overline{\mathfrak{f}}(y')$. Therefore 
\begin{align*}
\overline{S}\subset\{(y',y_d)\in \R^d: \ y'\in \overline{Q_{d-1}(0,r')}, \ y_d=\overline{\mathfrak{f}}(y') \}    
\end{align*}

Similarly, for any point $(y',y_d)=y\in \{(y',y_d)\in \R^d: \ y'\in \overline{Q_{d-1}(0,r')}, \ y_d=\overline{\mathfrak{f}}(y') \}$ we have $y_d=\overline{\mathfrak{f}}(y')$. Now we show that $(y',y_d)\in \overline{S}$. Since $y'$ is an element of $\overline{Q_{d-1}(0,r')}$ there exists a sequence $\{x_k\}_{k=1}^\infty$  in $Q_{d-1}(0,r')$ convergent to $y'$ in $\R^{d-1}$. By continuity of $\mathfrak{f}$  the sequence $\{(x_k, \mathfrak{f}(x_k))\}_{k=1}^\infty\subset S$ convergent to $(y',\overline{\mathfrak{f}}(y'))=(y',y_d)$ in $\R^d$. Hence
\[\overline{S}=\{(y',y_d)\in \R^d: \ y'\in \overline{Q_{d-1}(0,r')}, \ y_d=\overline{\mathfrak{f}}(y') \}.\]
Now we will show that, for any $(a,b)\subset (-1,1)$ we have that
\begin{align}\label{clousure of Wab}
    \overline{W_{(a,b)}}=\bigcup\limits_{t\in[a,b]} \overline{S}+Ate_d.
\end{align}

If $(y',y_d)=y$ is an element of $\overline{W_{(a,b)}}$, then there exists a sequence $\{y_k\}_{k=1}^\infty=\{(y_k',y_{k,d})\}_{k=1}^\infty$ of points in $W_{(a,b)}$ such that $\lim\limits_{k\to\infty}y_k =y$. For each $k\in\N$, $y_k$ is an element of $W_{(a,b)}$ and so it can be written as $(y_k',\mathfrak{f}(y_k')+At_k)$, where $t_k\in (a,b)$ and $y_k'\in Q_{d-1}(0,r')$. Since the sequence $\{y_k\}_{k=1}^\infty$ is convergent so are the sequences $\{y'_k\}_{k=1}^\infty$ and $\{\mathfrak{f}(y'_k)+At_k\}$ are convergent to $y'$ in $\R^{d-1}$ and  to $y_d$ in $\R$, respectively. Since each $y'_k$ is an element of $Q_{d-1}(0,r')$ we have that $y'\in \overline{Q_{d-1}(0,r')}$. Moreover, since for $x\in Q_{d-1}(0,r')$ we have $\overline{\mathfrak{f}}(x)=\mathfrak{f}(x)$, we can write

\[y_d=\lim\limits_{k\to\infty}(\mathfrak{f}(y_k')+At_k)=\lim\limits_{k\to\infty}(\overline{\mathfrak{f}}(y_k')+At_k).\]

By continuity of $\overline{\mathfrak{f}}$ we conclude that $\lim\limits_{k\to\infty}\overline{\mathfrak{f}}(y_k')=\overline{\mathfrak{f}}(y')$ and so the sequence $\{t_k\}_{k=1}^\infty$ is also convergent to some $t\in [a,b]$. Hence 
\[y\in \{(y',y_d)\in \R^d: y'\in \overline{Q_{d-1}(0,r')}, \ y_d= \overline{\mathfrak{f}}(y')+At, \ t\in [a,b]\}.\]
Therefore
\[\overline{W_{(a,b)}}\subset \bigcup_{t\in [a,b]}\left\{(y',y_d)\in\R^d: y'\in \overline{Q_{d-1}(0,r')}, \ y_d=\overline{\mathfrak{f}}(y')\right\}+Ate_d=\bigcup\limits_{t\in[a,b]} \overline{S}+Ate_d.\]
If now $(y',y_d)$ is an element of $\bigcup\limits_{t\in[a,b]} \overline{S}+Ate_d.$ it can be written as $(y',\overline{\mathfrak{f}}(y')+At)$, where $y'\in\overline{Q_{d-1}(0,r')}$ and $t\in [a,b]$. Then there exists a sequence of points $\{x_k\}_{k=1}^\infty\in Q_{d-1}(0,r')$ such that $\lim\limits_{k\to\infty}x_k=y'$ in $\R^{d-1}$ and a sequence of numbers $\{t_k\}\subset (a,b)$ convergent to $t$. Therefore the sequence
$\{(x_k,\mathfrak{f}(x_k)+At_k)\}_{k=1}^\infty\subset W_{(a,b)}$ converges to $(y',y_d)$ in $\R^d$
and so $\bigcup\limits_{t\in[a,b]} \overline{S}+Ate_d\subset \overline{W_{(a,b)}}$.
In particular, we have (\ref{clousure of Wab}).

  For $i=1,\dots,d-1$ and $(a,b)\subset (-1,1)$, we define 
\begin{align*}
 \text{Face}_i(W_{(a,b)})&=\bigcup_{t\in [a,b]}\left\{(y',y_d)=(y_1,\dots,y_i,\dots,\mathfrak{f}(y')+At)\in \overline{Q_{d-1}(0,r')}: \  |y_i|=r'\right\}.    
\end{align*}
Then we can write 
\begin{align*}
  \bd(W_{(a,b)})&=\overline{W_{(a,b)}}\setminus W_{(a,b)}=\left(\bigcup\limits_{t\in[a,b]} \overline{S}+Ate_d\right)\setminus \left(\bigcup\limits_{t\in (a,b)} S+Ate_d\right)\\&=(\overline{S}+ae_d)\cup(\overline{S}+be_d)\cup\bigcup\limits_{i=1}^{d-1} \text{Face}_i(W_{(a,b)}).
\end{align*}
Hence, we get that for any $0<\varepsilon<1$, we have that
\begin{align*}
    \overline{W_{(\varepsilon,1)}}\setminus \bd (W)= W_{(\varepsilon,1)}\cup (S+A\varepsilon e_d)=\bigcup\limits_{t\in[\varepsilon,1)} S+Ate_d.
\end{align*}
and so, by (\ref{support u_1}) we have that
\begin{align}\label{nosnik u1e}
 \esssupp u_{1,\varepsilon}&\subset (\overline{W_{(1,\varepsilon)}}\setminus \bd(W))\cap\esssupp u\subset\bigcup\limits_{t\in[\varepsilon,1)} S+Ate_d\\&\notag\subset \bigcup\limits_{t\in(\varepsilon/2,1)} S+Ate_d= W_{(\varepsilon/2,1)}.   
\end{align}

Hence 

\begin{align}\label{nosniki u_1}
    u_{1,\varepsilon}\in C_C^1(W_{(\varepsilon/2,1)}).
\end{align}
Similarly, one can show that
\begin{align}\label{nosniki u_2 i u_3}
  u_{3,\varepsilon}\in C_C^1(W_{(-1,-\varepsilon/2)}), \ \   \esssupp u_{2,\varepsilon}\subset \overline{W_{(-3\varepsilon,3\varepsilon})}.
\end{align}

  Take now any $y\in W_{(0,1)}$. By the definition of $W_{(0,1)}$, there exists $t_y\in(0,1)$ such that $y\in S+At_ye_d$, and so if $3\varepsilon<t_y$ we have that $y\in W_{(3\varepsilon,1)}$. Hence for $0<\varepsilon<t_y/3$, by formula (\ref{wzor na F,1,varepsilon}), 
\[F_{1,\varepsilon}(y)=1.\]
It follows in view of (\ref{nosnik u1e}),
\begin{align*}
\lim\limits_{\varepsilon\to 0^+}u_{1,\varepsilon}=u\chi_{W_{(0,1)}},
\end{align*}
where the above limit is taken pointwise. Similar reasoning shows that 
\begin{align*}
  \lim\limits_{\varepsilon\to 0^+}u_{3,\varepsilon}=u\chi_{W_{(-1,0)}}.  
\end{align*}

   Take now a function $g$, such that $g\in W^{1,1}(W_{(0,1)})$. We will show, that the function $\widetilde{g}$ defined by the formula
\[\widetilde{g}(y)=\begin{cases}g(y) & y\in W_{(0,1)}\\
0 & y\in W\setminus (W_{(-1,0)}\cup W_{(0,1)})\\
g(\mathcal{R}'(y)) & y\in W_{(-1,0)}
\end{cases}\]
is an element of $W^{1,1}(W)$ and for a.a $(y',y_d)=y\in W$ we have
\[\partial_i\widetilde{g}(y)=\begin{cases}
\partial_i g(y) & y\in W_{(0,1)}, \ i=1,\dots, d \\
(\partial_i g)(\mathcal{R}'(y))+ 2(\partial_d g)(\mathcal{R}'(y))\partial_i \mathfrak{f}(y') & (y',y_d)\in W_{(-1,0)} \ i=1,\dots, d-1 \\
-(\partial_d g)(\mathcal{R}'(y)) & (y',y_d)\in W_{(-1,0)}, \ i=d.
\end{cases}
\]
It is clear that $\widetilde{g}_{|W_{(0,1)}}$ is an element of $W^{1,1}(W_{(0,1)})$.
Next we show that $\widetilde{g}_{| W_{(-1,0)}}$ is an element of $W^{1,1}(W_{(-1,0)})$. Notice that $\widetilde{g}_{|W_{(-1,0)}}$ is given by the formula
\[\widetilde{g}_{|W_{(-1,0)}}(y)=g(\mathcal{R}'(y)),\]
where $y\in W_{(-1,0)}$. By assumption, the map $\mathcal{R'}$ is of class $C^1(W,W)$ and  $\mathcal{R'}(W_{(0,1)})= W_{(-1,0)}$. Moreover, $\mathcal{R'}$ is its own inverse, therefore $\mathcal{R}'_{| W_{(0,1)}}: W_{(0,1)}\to W_{(-1,0)}$ is an  invertable map  of class $C^1(W_{(0,1)},W_{(-1,0)})$.
Now by the  Theorem \ref{Integration by substitution formula}, we conclude that $\widetilde{g}_{|W_{(-1,0)}}$ is an element of $W^{1,1}(W_{(-1,0)})$ and
\[\partial_i\widetilde{g}_{|W_{(-1,0)}}(y)=\sum\limits_{j=1}^d(\partial_j g)(\mathcal{R}'(y))\partial_i\varrho'_j(y),\]
for $i=1,\dots, d$ and a.a $y\in W_{(-1,0)}$.
Since we know the explicit formulas for $\partial_i\varrho'_j$  (see (\ref{partial va}) and below) we rewrite the above formula as
\[\partial_i\widetilde{g}_{|W_{(-1,0)}}(y)=\begin{cases}
(\partial_i g)(\mathcal{R}'(y))+ 2(\partial_d g)(\mathcal{R}'(y))\partial_i \mathfrak{f}(y')  &  (y',y_d)\in W_{(-1,0)}, i=1,..., d-1 \\
-(\partial_d g)(\mathcal{R}'(y)) &  (y',y_d)\in W_{(-1,0)}, \ i=d.
\end{cases}
\]
Now we show that $\widetilde{g}$ is indeed an element of $W^{1,1}(W)$. First we will show that $\widetilde{g}$ is weakly differentiable on $W$. By Lemma \ref{characterization of weak differentiability} it suffices to show that for each $u\in C^1_C(W)$ and $i=1,\dots, d$ we have
\[\int\limits_W\widetilde{g}(x)\partial_iu(x)dx=-\int\limits_W\partial_i\widetilde{g}(x)u(x)dx.\]
In view of (\ref{definicja u1}), (\ref{definicja u2}), (\ref{definicja u3}) and (\ref{nosniki u_1}), (\ref{nosniki u_2 i u_3}), for any $i=1,\dots, d$ we have
 \begin{align}\label{equality fo intergrals of u}
     \int\limits_W\widetilde{g}(y)\partial_i u(y)dy &=\int\limits_{W_{(\varepsilon/2,1)}}\widetilde{g}(y)\partial_i u_{1,\varepsilon}(y)dy+\int\limits_{W_{(-3\varepsilon,3\varepsilon)}}\widetilde{g}(y)\partial_i u_{2,\varepsilon}(y)dy\\&+\int\limits_{W_{(-1,-\varepsilon/2)}}\widetilde{g}(y)\partial_i u_{3,\varepsilon}(y)dy\notag= I_1(\varepsilon)+I_2(\varepsilon)+ I_3(\varepsilon).
 \end{align}
We deal with each of $I_j(\varepsilon)$, for $j=1,2,3$ separately. Notice that, since $g\in W^{1,1}(W_{(0,1)})$ and $u_{1,\varepsilon}\in C_C^1(W_{(\varepsilon/2,1)})\subset C_C^1(W_{(0,1)})$,
\[I_1(\varepsilon)=\int\limits_{W_{(\varepsilon/2,1)}}\widetilde{g}(y)\partial_i u_{1,\varepsilon}(y)dy=\int\limits_{W_{(0,1)}}{g}(y)\partial_i u_{1,\varepsilon}(y)dy=-\int\limits_{W_{(0,1)}}\partial_i{g}(y) u_{1,\varepsilon}(y)dy.\]
Similarly, since $\widetilde{g}_{|W_{(-1,0)}}\in W^{1,1}(W_{(-1,0)})$ and $u_{-1,-\varepsilon}\in C^1_C(W_{(-1,-\varepsilon/2)})\subset C^1_C(W_{(0,1)})$ we have
\begin{align*}
    I_3(\varepsilon)&=\int\limits_{W_{(-1,-\varepsilon/2)}}\widetilde{g}(y)\partial_i u_{3,\varepsilon}(y)dy=\int\limits_{W_{(-1,0)}}\widetilde{g}_{|W_{(-1,0)}}(y)\partial_i u_{3,\varepsilon}(y)dy\\&=-\int\limits_{W_{(-1,0)}}\partial_i \widetilde{g}_{|W_{(-1,0)}}(y)u_{3,\varepsilon}(y).
\end{align*}
  The case of $I_2$ is slightly more involved.  First we rewrite it as a sum of two integrals
\begin{align*}
    I_2(\varepsilon) &=\int\limits_{W_{(-3\varepsilon,3\varepsilon)}}\widetilde{g}(y)\partial_i u_{2,\varepsilon}(y)dy=\int\limits_{W_{(-3\varepsilon,3\varepsilon)}}\widetilde{g}(y)\partial_i u(y) F_{-\varepsilon,\varepsilon}(y)dy\\&+\int\limits_{W_{(-3\varepsilon,3\varepsilon)}}\widetilde{g}(y) u(y)\partial_i F_{-\varepsilon,\varepsilon}(y)dy =
I_{2,1}(\varepsilon)+I_{2,2}(\varepsilon).
\end{align*}
If we denote by $C_u=\sup\limits_{y\in W}|\partial_i u(y)|$, then we get that for every $0<\varepsilon<\frac{1}{3}$ and $y\in W$,
\[
 |\widetilde{g}(y)\partial_i u(y) F_{-\varepsilon,\varepsilon}(y)|\leq C_u|\widetilde{g}(y)|.
 \]
In view of $\widetilde{g}_{|W_{(0,1)}}=g\in W^{1,1}(W_{(0,1)})$ and $\widetilde{g}_{|W_{(-1,0)}}=(g\circ \mathcal{R'})_{W_{(-1,0)}}\in W^{1,1}(W_{(-1,0)})$ we have $\widetilde{g}_{|W_{(-1,0)}}\in L^1(W_{(-1,0)})$ and $\widetilde{g}_{|W_{(0,1)}}\in L^1(W_{(0,1)})$. By the fact that $|W\setminus (W_{(-1,0)}\cup W_{(0,1)})|=0$ we get that $\widetilde{g}\in L^1(W)$ and since $W_{(-3\varepsilon,3\varepsilon)}\subset W$ we conclude $\widetilde{g}\in L^1(W_{(-3\varepsilon,3\varepsilon)})$.
Hence, by the Lebesgue Dominated Convergence Theorem, we have that
\[\lim\limits_{\varepsilon\to 0^+}I_{2,1}(\varepsilon)=0.\]
Now we want to deal with the term $I_{2,2}(\varepsilon)$. By (\ref{Jacobian of R'}), (\ref{rownosc w odbiciu dla d}), (\ref{rownosc w odbiciu dla i}), the fact that $\mathcal{{R'}}\circ\mathcal{R'}=Id_W$ and the integration by substitution formula, we have 
\begin{align}\label{nierownosc na I22}
  I_{2,2}(\varepsilon)&\notag=\int\limits_{W_{(-3\varepsilon,3\varepsilon)}}\widetilde{g}(y) u(y)\partial_i F_{-\varepsilon,\varepsilon}(y)dy
 \notag\\&=-\int\limits_{W_{(-3\varepsilon,3\varepsilon)}}\widetilde{g}(y) u(y)(\partial_iF_{-1,-\varepsilon}(y)+\partial_i F_{\varepsilon,1}(y))dy \notag\\&
 =-\int\limits_{W_{(-3\varepsilon,0)}}\widetilde{g}(y) u(y)\partial_i F_{-1,-\varepsilon}(y)dy-\int\limits_{W_{(0,3\varepsilon)}}\widetilde{g}(y)u(y)\partial_i F_{\varepsilon,1}(y)dy \notag\\&
 =-\int\limits_{W_{(-3\varepsilon,0)}}g(\mathcal{R}'(y))u(y)\partial_i F_{-1,-\varepsilon}(y)dy-\int\limits_{W_{(0,3\varepsilon)}}g(y)u(y)\partial_i F_{\varepsilon,1}(y)dy\notag\\&
 \overset{ \mathcal{R'}\circ\mathcal{R'}=Id_W}=-\int_{W_{(0,3\varepsilon)}}g(y)u(\mathcal{R'}(y))\partial_i F_{-1,-\varepsilon}(\mathcal{R'}(y))|\det J_{\mathcal{R'}}(y)|dy\notag\\&-\int\limits_{W_{(0,3\varepsilon)}}g(y)u(y)\partial_i F_{\varepsilon,1}(y)dy
  \notag\\&\overset{(\ref{Jacobian of R'}), \ (\ref{rownosc w odbiciu dla d}), \  (\ref{rownosc w odbiciu dla i})}=\int_{W_{(0,3\varepsilon)}}g(y)u(\mathcal{R'}(y))\partial_i F_{\varepsilon,1}(y)dy\notag\\&-\int\limits_{W_{(0,3\varepsilon)}}g(y)u(y)\partial_i F_{\varepsilon,1}(y)dy
= \int\limits_{W_{(0,3\varepsilon)}}g(y)\partial_i F_{\varepsilon,1}(y)(u(\mathcal{R'}(y))-u(y))dy.
\end{align}
Recall that for any $y\in W_{(0,3\varepsilon)}$,  $y=(y',y_d)$ and is of the form
\[(y',y_d)=(y', \mathfrak{f}(y')+tA)\]
for some $t\in(0,3\varepsilon)$, and also we have
\[\mathcal{R'}(y)=(y',\mathfrak{f}(y')-tA).\]
By the Mean Value Theorem, applied to the $d$-th variable on the interval $(\mathfrak{f}(y')-tA,\mathfrak{f}(y')+tA)$ for some $\theta\in (-1,1)$ we have
\[u(y)-u(\mathcal{R}'(y))=u(y',\mathfrak{f}(y')+tA)-u(y',\mathfrak{f}(y')-tA)=2A\partial_d u(y', \mathfrak{f}(y')+\theta tA)t.\]
Denote now $C'_u=\sup\limits_{y\in W}|\partial_d u(y)|$. Then for any $y\in W_{(0,3\varepsilon)}$ we have
\[|u(y)-u(\mathcal{R}'(y))|\leq 6 A C'_u \varepsilon.\]
And so by (\ref{i partial of F}), for $y\in W_{(0,3\varepsilon)}$ and $i=1,\dots, d-1$ we have
\[|\partial_i F_{\varepsilon,1}(y)(u(\mathcal{R'}(y))-u(y))|\leq 6A C'_u\varepsilon\sup\limits_{x\in Q_{d-1}(0,r')} |\partial_i \mathfrak{f}(x)|\frac{15}{A\varepsilon}=90C'_u\sup\limits_{x\in Q_{d-1}(0,r')} |\partial_i \mathfrak{f}(x)|,\]
and also by (\ref{d partial of F}) we have
\[|\partial_d F_{\varepsilon,1}(y)(u(\mathcal{R'}(y))-u(y))|\leq 6A C_u'\varepsilon\frac{15}{A\varepsilon}\leq 90C'_u.\]
And so, by (\ref{nierownosc na I22}) and the Lebesgue Dominated Convergence Theorem we conclude that
\[\lim\limits_{\varepsilon\to 0^+} I_{2,2}(\varepsilon)=0.\]
Therefore, 
\begin{align}\label{calka i22}
   \lim\limits_{\varepsilon\to 0^+}I_2(\varepsilon)=\lim\limits_{\varepsilon\to 0^+}I_{2,1}(\varepsilon)+\lim\limits_{\varepsilon\to 0^+}I_{2,2}(\varepsilon)=0 
\end{align}
Recall that  $C_u=\sup\limits_{y\in W}|\partial_i u(y)|<\infty$ and so $|\widetilde{g}(y)\partial_i u(y)|\leq C_u\widetilde{g}(y)$ for all $y\in W$. Hence, by the fact that $\widetilde{g}\in L^1(W)$, (\ref{equality fo intergrals of u}), (\ref{calka i22}), and the Lebesgue Dominated Convergence Theorem,
\begin{align*}
     \int\limits_W\widetilde{g}(y)\partial_i u(y)dy&=\lim\limits_{\varepsilon\to 0^+} (I_{1}(\varepsilon)+I_{2}(\varepsilon)+I_{3}(\varepsilon))=\lim\limits_{\varepsilon\to 0^+} I_{1}(\varepsilon)+\lim\limits_{\varepsilon\to 0^+}I_{2}(\varepsilon)+\lim\limits_{\varepsilon\to 0^+}I_{3}(\varepsilon)\\&=
    -\lim\limits_{\varepsilon\to 0^+}\int\limits_{W_{(0,1)}}\partial_i{g}(y) u_{1,\varepsilon}(y)dy-\lim\limits_{\varepsilon\to 0^+}\int\limits_{W_{(-1,0)}}\partial_i\widetilde{g}_{|W_{(-1,0)}}(y) u_{3,\varepsilon}(y)dy\\&=-\int\limits_W\left(\partial_i{g}(y)u(y)\chi_{W_{(0,1)}}(y)+ \partial_i\widetilde{g}_{|W_{(-1,0)}}(y)u(y)\chi_{W_{(-1,0)}}(y)\right)dy\\&=-\int\limits_W \partial_i\widetilde{g}(y)u(y)dy,
\end{align*}
and so $\widetilde{g}$ is weakly differentiable on $W$. Since both $\widetilde{g}$ and its weak derivatives are integrable on $W$, we conclude that $\widetilde{g}\in W^{1,1}(W)$.

 \end{proof}
 For any path $\gamma:[0,1]\to \R^d$ we denote its length by $l(\gamma)$. 
\begin{lemma}\label{lemat o krzywej}
    Let  $(W,  \{W_{(t_0,t_1)}\}_{(t_0,t_1)\subset (-1,1)}, \mathcal{R'})$  be the triple from Theorem \ref{reflection property}. 
     For any $x,y\in W$ we have
     \[|\mathcal{R'}(x)-\mathcal{R'}(y)|\leq C_1|x-y|,\]
     for some constant $C_1>0$ independent of points $x,y$. Moreover,
     there exist a family of paths $\gamma_{x,y}:[0,1]\to W$ indexed by $x,y\in W$, and a constant $C_2>0$ independent of $x,y$,  such that
     \begin{enumerate}
        \item[$(1)$]  $\gamma_{x,y}(0)=x$, $\gamma_{x,y}(1)=y$,
        \item[$(2)$] $\mathcal{R}'(\gamma_{x,y}(s))=\gamma_{\mathcal{R'}(x),\mathcal{R'}(y)}(s)$, for $s\in [0,1]$,
        \item[$(3)$]  if we write $x=(x',\mathfrak{f}(x')+t_x)$, $y=(y',\mathfrak{f}(y')+t_y)$ for some $t_x, t_y\in (-1,1)$ such that $t_x\leq t_y$, then for any $s\in [0,1]$ and any interval $(a,b)$ such that $[t_x,t_y]\subset(a,b)\subset(-1,1)$ we have
        \[\gamma_{x,y}(s)\in W_{(a,b)}\]
        and
        \[\mathcal{R'}(\gamma_{x,y}(s))\in W_{(-b,-a)},\]
        \item[$(4)$]  $l(\gamma_{x,y})\leq C_2|x-y|$,
        \item[$(5)$]  $l(\mathcal{R'}(\gamma_{x,y}))\leq C_1C_2|x-y|.$
  
     \end{enumerate}

\end{lemma}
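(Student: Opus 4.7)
The plan is to construct $\gamma_{x,y}$ explicitly by interpolating in the natural coordinates that describe $W$, namely the pair $(y', t)$ where $y = (y', \mathfrak{f}(y') + At)$. For $x = (x', \mathfrak{f}(x') + At_x)$ and $y = (y', \mathfrak{f}(y') + At_y)$ in $W$, I would set
\[
\gamma_{x,y}(s) = \bigl((1-s)x' + s y',\; \mathfrak{f}((1-s)x' + s y') + A((1-s)t_x + s t_y)\bigr),\qquad s\in[0,1].
\]
Since $\mathfrak{f}\in C^1(Q_{d-1}(0,r))$ and $(1-s)x' + sy' \in Q_{d-1}(0,r')$ (by convexity of the cube), the path stays inside $W$. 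Properties (1) and (2) are then verified by direct substitution: for (2), applying $\mathcal{R}'$ only flips the sign in front of $A((1-s)t_x + st_y)$, which coincides exactly with replacing $(t_x, t_y)$ by $(-t_x,-t_y)$ in the defining formula of $\gamma_{\mathcal R'(x),\mathcal R'(y)}(s)$.

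For (3), the key observation is that as $s$ ranges over $[0,1]$ the height parameter $(1-s)t_x + s t_y$ stays in the closed interval $[t_x, t_y]$, which is contained in any open $(a,b)\subset(-1,1)$ with $[t_x,t_y]\subset(a,b)$; the $\mathcal R'$-image is therefore contained in $W_{(-b,-a)}$ by property (2) and the identity $\mathcal R'(W_{(a,b)}) = W_{(-b,-a)}$ from Theorem \ref{reflection property}. The Lipschitz estimate for $\mathcal R'$ uses the explicit formula $\mathcal R'(y)=(y',2\mathfrak{f}(y')-y_d)$ together with the Mean Value Theorem applied to $\mathfrak{f}$ on $Q_{d-1}(0,r')$, giving
\[
|\mathcal R'(x)-\mathcal R'(y)|\le \bigl(1+2\sqrt{d-1}\,M\bigr)|x-y|,\qquad M:=\max_{1\le i\le d-1}\sup_{x'\in Q_{d-1}(0,r')}|\partial_i\mathfrak{f}(x')|,
\]
which finiteness of the gradient makes a valid constant $C_1$.

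For (4), I would differentiate $\gamma_{x,y}$ to obtain
\[
\gamma_{x,y}'(s) = \bigl(y'-x',\, \nabla\mathfrak{f}((1-s)x'+sy')\cdot(y'-x') + A(t_y-t_x)\bigr),
\]
so $|\gamma_{x,y}'(s)|\le (1+M\sqrt{d-1})|y'-x'|+A|t_y-t_x|$ uniformly in $s$. The remaining step is to bound $A|t_y-t_x|$ by $|x-y|$: from
$x_d-y_d=\mathfrak{f}(x')-\mathfrak{f}(y')+A(t_x-t_y)$
and the gradient bound on $\mathfrak{f}$ one gets $A|t_x-t_y|\le |x_d-y_d|+M\sqrt{d-1}\,|x'-y'|\le (1+M\sqrt{d-1})|x-y|$, so $l(\gamma_{x,y})=\int_0^1|\gamma_{x,y}'(s)|\,ds\le C_2|x-y|$ for $C_2=2(1+M\sqrt{d-1})$. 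Finally, (5) is automatic: since $\mathcal R'(\gamma_{x,y})=\gamma_{\mathcal R'(x),\mathcal R'(y)}$ by (2), applying the length bound from (4) to the reflected endpoints and then the Lipschitz property of $\mathcal R'$ yields $l(\mathcal R'(\gamma_{x,y}))\le C_2|\mathcal R'(x)-\mathcal R'(y)|\le C_1 C_2|x-y|$.

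The only mildly delicate point is checking that $(1-s)x' + sy'$ remains in $Q_{d-1}(0,r')$ so that $\mathfrak{f}$ is differentiable along the path (this follows from convexity of the cube); everything else reduces to the Mean Value Theorem and the uniform bound on $\nabla\mathfrak{f}$ guaranteed by Theorem \ref{reflection property}.
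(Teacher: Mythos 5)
Your proposal is correct and takes essentially the same route as the paper: the same explicit path $\gamma_{x,y}(s)=((1-s)x'+sy',\,\mathfrak{f}((1-s)x'+sy')+A((1-s)t_x+st_y))$, the same observation that the height parameter $(1-s)t_x+st_y$ stays in $[t_x,t_y]$ for (3), the same use of $\mathcal{R}'(W_{(a,b)})=W_{(-b,-a)}$, and the same reduction of (5) to (2), (4), and the Lipschitz bound on $\mathcal{R}'$. The only stylistic difference is in (4): the paper applies the Mean Value Theorem for integrals and then a Cauchy--Schwarz/convexity chain to bound the arc length, whereas you bound the integrand $|\gamma_{x,y}'(s)|$ uniformly in $s$ and integrate directly; both hinge on the identity $x_d-y_d=\mathfrak{f}(x')-\mathfrak{f}(y')+A(t_x-t_y)$ to trade $A|t_x-t_y|$ for $|x-y|$. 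Your explicit constants differ from the paper's ($C_1=1+2\sqrt{d-1}\,M$ vs.\ $\sqrt{8+8C_f^2}$, where $C_f\le\sqrt{d-1}\,M$), but the lemma only requires some finite constants, so this is immaterial.
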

\begin{proof}
  Take any $x=(x',x_d)\in W$ and $y=(y',y_d)\in W$, by the Mean Value Theorem we have 
  \[|\mathfrak{f}(x')-\mathfrak{f}(y')|=|\nabla \mathfrak{f}(x'+t_0(y'-x'))\cdot(x'-y')|,\]
  for some $t_0\in [0,1]$. Let
  
  \begin{align}\label{ograniszenie gradientu}
     C_f=\sup\limits_{x\in Q_{d-1}(0,r')}|\nabla \mathfrak{f}(x) |, 
  \end{align}
by assumption on $\mathfrak{f}$,  $C_f<\infty$.
 Hence 
  \begin{align}\label{stala dla krzywej}
      |\mathfrak{f}(x')-\mathfrak{f}(y')|\leq C_f|x'-y'|
  \end{align}
  for any $x',y'\in Q_{d-1}(0,r')$ and so for any $x,y\in W$,
  \begin{align*}
&|\mathcal{R'}(x)-\mathcal{R'}(y)|=\sqrt{|x'-y'|^2+|2(\mathfrak{f}(x')-\mathfrak{f}(y'))+(y_d-x_d)|^2}\\&\leq\sqrt{8|x'-y'|^2+ 8|\mathfrak{f}(x')-\mathfrak{f}(y')|^2+8|x_d-y_d|^2}\\&\leq 
\sqrt{8|x'-y'|^2+8C_f^2|x'-y'|^2+8|x_d-y_d|^2}\\&\leq
\sqrt{8|x'-y'|^2+8C_f^2|x'-y'|^2+8C_f^2|x_d-y_d|^2+8|x_d-y_d|^2}\\&=\sqrt{8|x-y|^2+8C_f^2|x-y|^2}=\sqrt{8+8C_f^2}|x-y|.
 \end{align*}
We rewrite it as
\begin{align}\label{23}
    |\mathcal{R'}(x)-\mathcal{R'}(y)|\leq C_1 |x-y|, 
\end{align}
where $C_1=\sqrt{8+8C_f^2}$, and $x,y\in W$.

  Letting $x,y\in W$,  $x=(x',x_d)=(x',\mathfrak{f}(x')+At_x)$ and $y=(y',y_d)=(y',\mathfrak{f}(y')+At_y)$ for some $t_x,t_y\in(-1,1)$. Define now a path $\gamma_{x,y} : [0,1]\to W$ by the formula
\begin{align}\label{formula na krzywa}
    \gamma_{x,y}(s)=((1-s)x'+sy', \mathfrak{f}((1-s)x'+sy')+(1-s)At_x+sAt_y),
\end{align}
for $s\in [0,1]$. It is clear that $\gamma_{x,y}(0)=(x',\mathfrak{f}(x')+At_x)=x$ and $\gamma_{x,y}(1)=y$ which is (1).  We also have for $s\in [0,1]$,

\begin{align}\label{odbicie i gamma}
 \mathcal{R'}(\gamma_{x,y}(s))&\notag= ((1-s)x'+sy', \mathfrak{f}((1-s)x'+sy')-(1-s)At_x-sAt_y)\\&\notag=((1-s)x'+sy', \mathfrak{f}((1-s)x'+sy')+(1-s)(-At_x)+s(-At_y))\\&=
 \gamma_{\mathcal{R'}(x),\mathcal{R'}(y)}(s),
\end{align}
which is (2).

  Condition (3) is quite obvious. In fact for each $s\in[0,1]$ we have $\gamma_{x,y}(s)\in S+A((t_y-t_x)s+t_x)e_d$, so for any interval $(a,b)\subset (-1,1)$ such that $[t_x,t_y]\subset (a,b)$, provided that $t_x\leq t_y$, we have
\[
    \gamma_{x,y}(s)\in W_{(a,b)} \ \ \ \text{and} \ \ \ 
    \mathcal{R'(}\gamma_{x,y}(s))\in W_{(-b,-a)}.
\]

  We will show now condition (4) that is the length of $\gamma_{x,y}$ is bounded by $C_2|x-y|$, where the constant $C_2$ is independent of the choice of $x,y$.  By the formula for the length of a parametric curve and the Mean Value Theorem, there exists $s_1\in (0,1)$ such that
\begin{align*}
   l(\gamma_{x,y})&=\int\limits_0^1\sqrt{|x'-y'|^2+|(\nabla f)(x'+(y'-x')s)\cdot(y'-x')+A(t_y-t_x)|^2}ds\\&=
  \sqrt{|x'-y'|^2+|(\nabla f)(x'+(y'-x')s_1)\cdot(y'-x')+A(t_y-t_x)|^2}.
\end{align*}

  Hence by the inequality (\ref{stala dla krzywej}),  Cauchy Schwartz Inequality and convexity of the function $u(t)=t^2$ we have
\begin{align*}
    l(\gamma_{x,y})&=\int\limits_0^1\sqrt{|x'-y'|^2+|(\nabla f)(x'+(y'-x')s)\cdot(y'-x')+A(t_y-t_x)|^2}ds\\
    &\overset{MVT}=
  \sqrt{|x'-y'|^2+|(\nabla f)(x'+(y'-x')s_1)\cdot(y'-x')+A(t_y-t_x)|^2}\\&\overset{CSI}\leq 
  \sqrt{|x'-y'|^2+2|(\nabla f)(x'+(y'-x')s_1)|^2|y'-x'|^2+2|A(t_y-t_x)|^2}\\&\overset{(\ref{ograniszenie gradientu})}\leq
  \sqrt{(2C_f^2+1)|y'-x'|^2+2|A(t_y-t_x)+\mathfrak{f}(y')-\mathfrak{f}(x')-(\mathfrak{f}(y')-\mathfrak{f}(x'))|^2}\\&\leq
  \sqrt{(2C_f^2+2)}\sqrt{\begin{aligned}&|y'-x'|^2+|\mathfrak{f}(y')-\mathfrak{f}(x')+A(t_y-t_x)|^2\\&+2|\mathfrak{f}(y')-\mathfrak{f}(x')+A(t_y-t_x)||\mathfrak{f}(y')-\mathfrak{f}(x')|+|\mathfrak{f}(y')-\mathfrak{f}(x')|^2\end{aligned} }\\&\overset{(\ref{odbicie i gamma})}\leq 
  \sqrt{(2C_f^2+2)}\sqrt{2|x-y|^2+2C_f|x-y||x'-y'|+C_f^2|x'-y'|^2}\\&\leq
  \sqrt{(2C_f^2+2)(C_f^2+2C_f+2)}|x-y|=C_2|x-y|.
\end{align*}
Hence we get condition (4) that is for $x,y\in W$,
\begin{align*}
   l(\gamma_{x,y})\leq C_2|x-y|,
\end{align*}
where $C_2= \sqrt{(2C_f^2+2)(C_f^2+2C_f+2)}$.
Now by (\ref{23}), we conclude that
\begin{align*}
 l(\mathcal{R'}(\gamma_{x,y}))=l(\gamma_{\mathcal{R'}(x),\mathcal{R'}(y)})\leq C_2 |\mathcal{R'}(x)-\mathcal{R'}(y)|\leq C_2C_1|x-y|,\end{align*}
 which shows (5) and finishes the proof.

\end{proof}
\begin{lemma}\label{extension of Phi}
  Let $(W,  \{W_{(t_0,t_1)}\}_{(t_0,t_1)\subset (-1,1)}, \mathcal{R'})$ be the triple from Theorem \ref{reflection property}.
For a MO function $\Phi$ on $W_{(0,1)}$,  define the function $\Phi^W:W\times [0,\infty)\to [0,\infty)$ by the formula
     \[\Phi^W(x,t)=
   \begin{cases} 
      \Phi(x,t) & x\in W_{(0,1)} \\
      0 & x\in S \\
      \Phi(\mathcal{R'}(x),t) & x\in W_{(-1,0)}. 
      
   \end{cases}
\]
Then $\Phi^W$ is a MO function. Moreover
\begin{enumerate}
    \item[$(1)$]  if $\Phi$ satisfies $\Delta_2$ condition, so does $\Phi^W$.
    \item[$(2)$]  Let $N=\left\lceil 2 C_1C_2\right\rceil$ and $N_1=\left\lceil 2C_1\right\rceil$, where constants $C_1, \ C_2$ are from the Lemma (\ref{lemat o krzywej}). If $\Phi$ satisfies condition {\rm(A1)} with constant $\beta$, then $\Phi^W$ satisfies {\rm(A1)} with the constant $\beta^{N+N_1}$.
\end{enumerate}
\end{lemma}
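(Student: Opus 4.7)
The plan splits naturally according to the three claims in the statement. For $\Phi^W$ being an MO function, the three defining properties of Definition \ref{def} transfer piecewise: convexity in $t$ is inherited on $W_{(0,1)} \cup W_{(-1,0)}$ from $\Phi$ and is trivial on $S$; the condition $\Phi^W(x,t)=0 \iff t=0$ holds on $W_{(0,1)}\cup W_{(-1,0)}$, and $S$ is a Lebesgue-null set in $\R^d$ (being the graph of $\mathfrak{f}$) so it can be absorbed without affecting the associated MO space; measurability of $x \mapsto \Phi^W(x,t)$ follows from $\mathcal{R}' \in C^1(W,W)$ and measurability of $\Phi(\cdot,t)$ on $W_{(0,1)}$.

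For claim (1), given the constant $C$ and $h \in L^1(W_{(0,1)})$ from the $\Delta_2$ inequality for $\Phi$, I would define $h^W$ on $W$ by the same reflection formula used for $\Phi^W$. By $|\det J_{\mathcal{R}'}|=1$ (computed inside the proof of Theorem \ref{reflection property}) together with Theorem \ref{Integration by substitution formula}, $\int_{W_{(-1,0)}} h^W\,dy = \int_{W_{(0,1)}} h\,dy$, so $h^W \in L^1(W)$; the $\Delta_2$-inequality for $\Phi^W$ then reduces piecewise to that of $\Phi$ on each of $W_{(0,1)}$ and $W_{(-1,0)}$, with $S$ discarded as a null set.

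The real work is claim (2). The key tool I would establish first is a chaining lemma for $\Phi$: if $z_0,\ldots,z_M \in W_{(0,1)}$ and each consecutive pair $z_i,z_{i+1}$ lies in a common ball $B_i$ with $|B_i|\leq |B| < \delta$, then $\Phi(z_0,\beta^M t) \leq \Phi(z_M,t)$ for all $t \in [\Phi^{-1}(z_M,1),\Phi^{-1}(z_M,1/|B|)]$. The proof is by induction on $M$; the subtle point, which I expect to be the main obstacle, is that a naive iteration fails when $\Phi(z_1,\beta^{M-1}t)$ drops below $1$, the lower endpoint of the (A1)-range at $z_1$. The remedy is a dichotomy: if $\Phi(z_1,\beta^{M-1}t) \geq 1$, apply (A1) for the pair $(z_0,z_1)$ at parameter $\beta^{M-1}t$ and combine with the inductive hypothesis for $z_1,\ldots,z_M$; otherwise, set $s_0 := \Phi^{-1}(z_1,1)$, apply (A1) once to obtain $\Phi(z_0,\beta s_0) \leq 1$, and use $\beta^M t < \beta s_0$ together with monotonicity of $\Phi(z_0,\cdot)$ to conclude $\Phi(z_0,\beta^M t) \leq 1 \leq \Phi(z_M,t)$.

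With the chaining lemma in hand, I would take $\delta^W := \delta$ and split into three cases for $x,y \in B \cap W$ with $|B| < \delta^W$ and radius $r$. If $x,y \in W_{(0,1)}$, (A1) applied to $(x,y)$ plus monotonicity gives the bound immediately. If $x,y \in W_{(-1,0)}$, set $\tilde x=\mathcal{R}'(x)$, $\tilde y=\mathcal{R}'(y)\in W_{(0,1)}$ with $|\tilde x-\tilde y| \leq 2C_1 r$, and use $\gamma_{\tilde x,\tilde y}$ from Lemma \ref{lemat o krzywej}; parts (3) and (4) of that lemma ensure that the curve stays in $W_{(0,1)}$ and has length at most $2C_1 C_2 r$, so partitioning it into $N$ arcs of length at most $r$ produces endpoints with consecutive pairs in sub-balls of measure at most $|B|$, and the chaining lemma yields $\Phi^W(x,\beta^N t) \leq \Phi^W(y,t)$. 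For the mixed case $x\in W_{(-1,0)}$, $y \in W_{(0,1)}$, $\gamma_{x,y}$ crosses $S$ at some $z_*$ fixed by $\mathcal{R}'$, so reflecting $\gamma_{x,y}|_{[0,s_*]}$ by $\mathcal{R}'$ and concatenating with $\gamma_{x,y}|_{[s_*,1]}$ produces a curve in $W_{(0,1)}$ from $\tilde x$ to $y$ of length at most $C_1\, l(\gamma_{x,y}) \leq 2C_1 C_2 r$, and the same partition-and-chain argument applies. In all three cases, $\Phi^W(x,\beta^{N+N_1}t) \leq \Phi^W(y,t)$ follows by monotonicity from $\beta^{N+N_1} \leq \beta^N$.
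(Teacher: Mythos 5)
Your proof is correct, and parts (0) (that $\Phi^W$ is MO) and (1) (the $\Delta_2$ transfer via the reflected $h^W$ and $|\det J_{\mathcal{R'}}|=1$) match the paper essentially verbatim. Part (2) is where you take a genuinely different route, and it is worth pointing out the contrast. The paper first \emph{reformulates} condition (A1) in terms of the inverse: there are $\beta,\delta$ so that for $|B|<\delta$ and a.e.\ $x,y\in B\cap W_{(0,1)}$ one has $\beta\Phi^{-1}(x,t)\le\Phi^{-1}(y,t)$ for $t\in[1,1/|B|]$. In this form the parameter range $[1,1/|B|]$ is the \emph{same} at every point of the chain, so the chaining step is a completely mechanical iteration: $\beta^{M}\Phi^{-1}(x_1,t)\le\beta^{M-1}\Phi^{-1}(x_2,t)\le\dots\le\Phi^{-1}(x_{M+1},t)$, with no case distinctions. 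You instead work with the original $\Phi$-form of (A1), where the admissible $t$-range $[\Phi^{-1}(y,1),\Phi^{-1}(y,1/|B|)]$ is $y$-dependent, and this forces the dichotomy in your chaining lemma (the case where $\Phi(z_1,\beta^{M-1}t)$ drops below $1$). Your dichotomy is sound — in the sub-critical branch you correctly observe that $\Phi(z_0,\beta^M t)<1\le\Phi(z_M,t)$ closes the induction — but it is strictly more work than the inverse reformulation, which dissolves the obstacle entirely. The ball-covering geometry and the three-case split ($x,y$ both above, both below, or straddling $S$) are the same in both arguments; your treatment of the mixed case (reflect the piece of $\gamma_{x,y}$ below $S$ and concatenate, giving a single $W_{(0,1)}$-curve of length $\le C_1 l(\gamma_{x,y})\le 2C_1C_2 r$ chainable by $N$ balls) is in fact slightly tidier than the paper's two-chain bookkeeping with the index $K$. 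One small gap you do not address, which the paper handles explicitly: the (A1) inequality holds only off a null set $S_\Phi$, and the chain points must be chosen in $W_{(0,1)}\setminus(S_\Phi\cup\mathcal{R'}(S_\Phi))$; since the relevant balls meet $W_{(0,1)}$ in positive measure this is routine, but it should be said.
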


\begin{proof}
  Let $S, \ W, \ \Phi $ and $\mathcal{R'}$ be as in the assumptions. Since $\mathcal{R'}$ is of class $C^1(W,W)$, the function $\Phi(\cdot,t)\circ\mathcal{R'}$ is measurable on $W_{(-1,0)}$, for each $t\geq 0$. Moreover, since 
  \[W\setminus (W_{(-1,0)}\cup W_{(0,1)})=S\]
  and $|S|=0$ we conclude that $\Phi^W(\cdot,t)$ is measurable for each $t\geq 0$. On the other hand, if $x\in W_{(-1,0)}$, then $\mathcal{R'}(x)\in W_{(0,1)}$ and so $\Phi(\mathcal{R'}(x),\cdot)$  is a non-negative, convex function such that $\Phi(\mathcal{R'}(x),t)=0$ if and only if $t=0$. Hence $\Phi^W$ is a MO function on $W$.
  
   (1) Assume now that $\Phi\in\Delta_2$, then there exist $C>0$ and a non-negative $h\in L^1(W_{(0,1)}) $ such that for any $t\geq 0$ and a.e. $x\in W_{(0,1)}$,
\[\Phi(x,2t)\leq C\Phi(x,t)+h(x).\]
Define now $h_W:W\to[0,\infty)$ by the formula 
     \[h_W(x)=
   \begin{cases} 
      h(x) & x\in W_{(0,1)} \\
      0 &  x\in S \\
      h(\mathcal{R'}(x)) & x\in W_{(-1,0)}. 
    \end{cases}
\]
Since $\mathcal{R'}$ is a differentiable involution, we have that $|\det J_{\mathcal{R'}}(x)|=1$ for all $x\in W$ and so
\begin{align*}
   \int\limits_W h_W(x)dx&=\int\limits_{W_{(1,0)}} h_W(x)dx+\int\limits_{W_{(1,0)}}  h_W(\mathcal{R'}(x))dx\\&\leq \|h\|_{1}+\int\limits_{W_{(1,0)}} h_W(x)dx|\det J_{\mathcal{R'}}(x)|dx\leq 2 \|h\|_1. 
\end{align*}

Hence $h_W$ is a non-negative element of $L^1(W)$. Taking now any $x\in W_{(0,1)}$  and $t\geq 0$, 
\[\Phi^W(x,2t)=\Phi\left(x,2t\right)\leq C\Phi(x,t)+h(x)=C\Phi^W(x,t)+h_W(x).\]
Similarly, if $x\in W_{(-1,0)}$, then 
\[\Phi^W(x,2t)=\Phi\left(\mathcal{R'}(x),2t\right)\leq C\Phi\left(\mathcal{R'}(x),t\right)+h(\mathcal{R'}(x))=C\Phi^W(x,t)+h_W(x).\]
Therefore $\Phi^W$ satisfies $\Delta_2$ condition.

  (2) Now assume that $\Phi$ satisfies the condition {\rm (A1)} with the constant $\beta$ and let $N=\lceil2 C_1C_2\rceil$, where constants $C_1, \ C_2$ are from Lemma \ref{lemat o krzywej}. We will show that $\Phi^W$ satisfies  condition (A1). 

  Recall that, if $\Phi$ satisfies condition (A1), in our case on set $W_{(0,1)}$, then there exist $\beta, \delta\in(0,1)$ such that for all open balls $B$ with $|B|<\delta$ and almost all $x,y\in B\cap W_{(0,1)}$ we have
\[\beta\Phi^{-1}(x, t)\leq \Phi^{-1}(y,t),\]
where $t\in \left[1,\frac{1}{|B|} \right]$.

  Let $S_\Phi\subset W$ be the set of measure $0$ such that the above inequality holds for all $x,y\in \left(B\cap W_{(0,1)}\right)\setminus S_\Phi$, provided that $B$ is an open ball with $|B|<\delta$. Note that, since for all $x\in W$ we have $|\det J_{\mathcal{R'}}(x)|=1$, it follows that the set $\mathcal{R'}(S_\Phi)$ is also of measure $0$. We will show now that for any open ball $B$ with $|B|<\delta$ and any $x,y\in (W_{(-1,0)}\cup W_{(0,1)})\cap B\setminus (S_\Phi \cup \mathcal{R'}(S_\Phi)) $ we have
\[\beta^{N+2} \left(\Phi^W\right)^{-1}(x,t)\leq \left(\Phi^W\right)^{-1}(y,t)\]
for $t\in\left[1,\frac{1}{|B|}\right]$. In view of $|S\cup S_\Phi\cup\mathcal{R'}(S_\Phi)|=0$ we will conclude that $\Phi^W$ satisfies (A1) condition.

  Take any open ball $B$ with $|B|<\delta$ and let $x,y\in (W_{(-1,0)}\cup W_{(0,1)})\cap B\setminus (S_\Phi \cup \mathcal{R'}(S_\Phi)) $. We have three cases
\begin{enumerate}
\item[$1^0$] $x,y\in W_{(0,1)}$,

\item[$2^0$] $x,y\in W_{(-1,0)}$,

\item[$3^0$] $x\in W_{(0,1)}$ and $y\in W_{(-1,0)}$.
\end{enumerate}

  In the first case, for any $t\geq 0$ we have that $\left(\Phi^W\right)^{-1}(x,t)=\Phi^{-1}(x,t)$ and $\left(\Phi^W\right)^{-1}(y,t)=\Phi^{-1}(y,t)$ and so, since $\Phi$ satisfies (A1) and $N, N_1\in \N$ we have
\begin{align*}
   \beta^{N+N_1} \left(\Phi^W\right)^{-1}(x,t)&=\beta^{N+N_1}\Phi^{-1}(x,t)\leq \beta^{N+N_1-1} \Phi^{-1}(y,t)<\Phi^{-1}(y,t)\\&= \left(\Phi^W\right)^{-1}(y,t). 
\end{align*}

  For the second case, by Lemma \ref{lemat o krzywej} there exists a path $\gamma_{x,y}:[0,1]\to W$ such that $\gamma_{x,y}(0)=x$ and $\gamma_{x,y}(1)=y$ and $\gamma_{x,y}(s)\in W_{(-1,0)}$ for each $s\in [0,1]$. By the same lemma we have that \[\gamma_{\mathcal{R'}(x),\mathcal{R'}(y)}(s)=\mathcal{R'}(\gamma_{x,y}(s))\in W_{(0,1)}\]
for each $s\in [0,1]$. Therefore the points $\mathcal{R'}(x)$ and $\mathcal{R'}(y)$ are connected by the path $\gamma_{\mathcal{R'}(x),\mathcal{R'}(y)}$ that is totally contained in $W_{(0,1)}$. Moreover by Lemma \ref{lemat o krzywej}, we have that for any $x,y\in W$,
\[l(\gamma_{\mathcal{R'}(x),\mathcal{R'}(y)})\leq C_1C_2|x-y|.\]
Since both $x,y\in B$ we have that 
\[|x-y|\leq 2\sigma_d^{-1/d}|B|^{1/d},\]
and so 
\[l(\gamma_{\mathcal{R'}(x),\mathcal{R'}(y)})\leq 2C_1C_2\sigma_d^{-1/d}|B|^{1/d},\]
where $\sigma_d$ is defined by (\ref{1}). 
Hence the path $\gamma_{\mathcal{R'}(x),\mathcal{R'}(y)}$ can be covered with a chain  of $N=\left\lceil 2C_1C_2\right\rceil$ open balls $B_1,\dots,B_N$, each of measure $|B|$, such that each two successive balls $B_i$, $B_{i+1}$ have non-empty intersection and $\mathcal{R'}(x)\in B_1$, $\mathcal{R'}(y)\in B_N$.

  Set $x_1=\mathcal{R'}(x)$ and $x_{N+1}=\mathcal{R'}(y)$, and for each $i=2,\dots, N$ choose a point $x_i\in B_{i-1}\cap B_{i}\cap W_{(0,1)}\setminus S_\Phi $. We observe that for each $i=1,\dots, N$ we have $x_{i}, x_{i+1}\in B_{i} $. Now notice that $\left(\Phi^W\right)^{-1}(x, t)=\Phi^{-1}(\mathcal{R'}(x),t)$ and   $\left(\Phi^W\right)^{-1}(y, t)=\Phi^{-1}(\mathcal{R'}(y),t)$. Finally, take any $t\in \left[1,\frac{1}{|B|}\right]$ and since for any $i=1,\dots, N$ we have $x_i,x_{i+1}\in B_{i}$ and $|B_{i}|=|B|$, by successive $N$ times application of condition (A1) we arrive at
\begin{align*}
    \beta^{N+N_1}(\Phi^W)^{-1}(x,t)&=\beta^{
    N+N_1}\Phi^{-1}(\mathcal{R'}(x),t)=\beta^{
    N+N_1}\Phi^{-1}(x_1,t)\leq \beta^{N+N_1-1}\Phi^{-1}(x_2,t)\\&\leq \beta^{N+N_1-2}\Phi^{-1}(x_3,t)\leq\dots\\&\leq \beta^{N+N_1-i}\Phi^{-1}(x_{i+1},t)\leq \beta^{N+N_1-i-1}\Phi^{-1}(x_{i+2},t)\leq\dots\\&\leq
    \beta^{N_1}\Phi^{-1}(x_{N+1},t)=\beta^{N_1}\Phi^{-1}(\mathcal{R'}(y),t)<\left(\Phi^W\right)^{-1}(y,t).
\end{align*}

  For the third case let us take the path $\gamma_{x,y}$ from Lemma \ref{lemat o krzywej} (see (\ref{formula na krzywa})). Since $x\in  B\cap W_{(0,1)}$, and $y\in B\cap W_{(-1,0)}$ there exist $t_x\in (0,1)$ and $t_y\in(-1,0)$ such that $x=(x',\mathfrak{f}(x')+At_x)$ and $y=(y',\mathfrak{f}(y')+At_y)$. Hence, by continuity of $\gamma_{x,y}$ there exists $s_0\in [0,1]$ such that $((1-s_0)x'+s_0y',\mathfrak{f}((1-s_0)x'+s_0y'))=\gamma_{x,y}(s_0)\in S$, let us denote $z=\gamma_{x,y}(s_0)$. By Lemma \ref{lemat o krzywej} we have for any $x,y\in W$,
\[l(\gamma_{x,y})\leq C_1|x-y|,\]
and since $x,y\in B$ we conclude that
\[l(\gamma_{x,y})\leq C_1|x-y|\leq 2C_1\sigma_d^{-1/d}|B|^{1/d}.\]
Hence, the path $\gamma_{x,y}$ can be covered by a chain of $N_1=\lceil 2C_1\rceil$ open balls, $B_1,\dots,B_{N_1}$ such that each two consecutive balls $B_i, B_{i+1}$ have a non-empty intersection. Let $K$ be the smallest integer such  that $z\in B_{K}$, it follows that we have that $K\leq N_1$.  
 Consider now the path $\gamma_{z,y}$.  Since $z\in S$ we have that $z=(z',\mathfrak{f}(z'))$, and for any $s\in [0,1]$, 

\[\gamma_{z,y}(s)=((1-s)z'+sy',\mathfrak{f}((1-s)z'+sy')+At_ys).\]
Since $y\in W_{(-1,0)}$ we conclude that $\gamma_{z,y}(s)\in W_{(-1,0)}$ for $s\in (0,1]$, but  $\gamma_{z,y}(0)=z\in S$ and $S\cap W_{(-1,0)}=\emptyset$. Consequently for $s\in [0,1]$, 
\[\gamma_{z,y}(s)\in S\cup W_{(-1,0)} \text{ and }\mathcal{R'}(\gamma_{z,y}(s))\in S\cup W_{(0,1)}.\]  On the other hand, since $z\in S$ and $z=\gamma_{x,y}(s_0)$ we have that 
\begin{align}\label{kombinacja wypukla x i y}
    (1-s_0)At_x+s_0At_y=0.
\end{align}
Hence,
\begin{align*}
  z&=\gamma_{x,y}(s_0)=((1-s_0)x'+s_0y',\mathfrak{f}((1-s_0)x'+s_0y')+(1-s_0)At_x+ s_0At_y)\\&=((1-s_0)x'+s_0y',\mathfrak{f}((1-s_0)x'+s_0y')),  \end{align*}
and so for $s\in [0,1]$ we have
\begin{align*}
    \gamma_{z,y}(s)&=((1-s)z'+sy',\mathfrak{f}((1-s)z'+sy')+sAt_y)\\&\overset{(\ref{kombinacja wypukla x i y})}=((1-s)((1-s_0)x'+s_0y')+sy',\\&\mathfrak{f}((1-s)((1-s_0)x'+s_0y')+sy')+(1-s)((1-s_0)At_x+s_0At_y)+sAt_y)
    \\&=((1-(s+s_0-ss_0))x'+(s+s_0-ss_0)y',
    \\&\mathfrak{f}((1-(s+s_0-ss_0))x'+(s+s_0-ss_0)y') \\&+(1-(s+s_0-ss_0))At_x+(s+s_0-ss_0)At_y)\\&
    =\gamma_{x,y}(s+s_0-ss_0)=\gamma_{x,y}((1-s)s_0+s).
\end{align*}
Since $s,s_0\in [0,1]$ we conclude that $((1-s)s_0+s)\in [s_0,1]$ and so 
\[\gamma_{z,y}([0,1])=\gamma_{x,y}([s_0,1]).\]
Clearly $\gamma_{z,y}$ is a part of the path $\gamma_{x,y}$. By Lemma \ref{lemat o krzywej} (5), we have that \[l(\mathcal{R'}(\gamma_{z,y}))\leq l(\mathcal{R'}(\gamma_{x,y}))\leq 2C_1C_2\sigma_d^{-1/d}|B|^{1/d}.\]
Similarly to the second case, we can find a chain of  $N=\lceil 4C_1C_2\rceil$  open balls $B'_{K+1},\dots,B'_{N+K}$, that covers the path $\mathcal{R'}(\gamma_{z,y})=\gamma_{z,\mathcal{R'}(y)}$, each of measure $|B|$ such that $z\in B'_{K+1}$, $\mathcal{R'}(y)\in B'_{N+K}$ and for $i={K+1},\dots,N+K-1$ each two successive balls $B'_i, B'_{i+1}$ have non-empty intersection. Since $z\in S\cap B_K\cap B'_{K+1}$ there exists a point $x_{K+1}\in W_{(0,1)}\cap B_K\cap B'_{K+1}\setminus S_\Phi$. Let now $x_1=x$ and for each $i=2,\dots, K$ choose a point $x_i\in B_{i-1}\cap B_{i}\cap W_{(0,1)}\setminus S_\Phi$, and for $i=K+2,\dots, N+K$ we choose a point $x_i\in B'_{i-1}\cap B'_{i}\cap W_{(0,1)}\setminus S_\Phi$ and finally we set  $x_{N+K+1}=\mathcal{R'}(y)$. We notice that $(\Phi^W)^{-1}(x,t)=\Phi^{-1}(x,t)$ and $(\Phi^W)^{-1}(y,t)=\Phi^{-1}(\mathcal{R'}(y),t)$. Now we take any $t\in \left[1,\frac{1}{|B|}\right]$ and argue as in the second case,
\begin{align*}
    \beta^{N+N_1}(\Phi^W)^{-1}(x,t)&=\beta^{
    N+N_1}\Phi^{-1}(x_1,t)\leq\beta^{
    N+K}\Phi^{-1}(x_1,t)\leq\beta^{
    N+K-1}\Phi^{-1}(x_2,t)\\&\leq \beta^{N+K-2}\Phi^{-1}(x_3,t)\leq\dots\leq \beta^{N+K-i}\Phi^{-1}(x_{i+1},t)\\&\leq \beta^{N+K-i-1}\Phi^{-1}(x_{i+2}.t)\leq\dots\overset{i=N+K-1}\leq
    \Phi^{-1}(x_{N+K+1},t)\\&=\Phi^{-1}(\mathcal{R'}(y),t)=\left(\Phi^W\right)^{-1}(y,t).
\end{align*}
Since all three cases are covered this ends the proof.
 \end{proof}
\begin{lemma}\label{Phi i rigid motion}
Let $V,W$ be open sets in $\R^d$ such that there exists a rigid motion $T$ on $\R^d$ with $T(V)=W$. Let $\Phi$ be a MO function on $V$ and define $\Phi_W:=\Phi\circ T^{-1}$. Then $\Phi_W$ is a MO function on $W$ and
\begin{enumerate}
    \item[$(1)$]  if $\Phi$ satisfies $ \Delta_2$ condition, then so does $\Phi_W$,
    \item[$(2)$]  if $\Phi$ satisfies {\rm (A1)}, then $\Phi_W$ also satisfies {\rm (A1)},
    \item[$(3)$]  the operator $G_T :W^{1,\Phi_W}(W)\to W^{1,\Phi}(V)$, defined by the formula
    \[(G_T u)(x)= u(T(x)),\]
for $u\in W^{1,\Phi_W}$ and $x\in W$, is an isomorphism.
\end{enumerate}
\end{lemma}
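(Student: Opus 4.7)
The plan is that, since $T$ is a rigid motion and thus a $C^1$ isometry with $|\det J_T|=1$ and constant Jacobian matrix $R$ equal to the rotation part of $T$, every relevant structural feature of $\Phi$ transfers to $\Phi_W$ under the change of variables $y=T(x)$. Consequently the three items should all be direct corollaries of Theorem \ref{Integration by substitution formula} combined with the fact that $T^{-1}$ preserves Lebesgue measure and maps open balls to open balls of the same radius.

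First I would verify that $\Phi_W$ is a MO function on $W$. Convexity in $t$ and the identity $\Phi_W(x,t)=0\iff t=0$ are inherited pointwise from $\Phi$ because $\Phi_W(x,\cdot)=\Phi(T^{-1}(x),\cdot)$. For Lebesgue measurability of $x\mapsto \Phi(T^{-1}(x),t)$ at each fixed $t\geq 0$, I would use that $T^{-1}$ is continuous and sends Lebesgue null sets to Lebesgue null sets, so the composition of the measurable function $\Phi(\cdot,t)$ with $T^{-1}$ is measurable on $W$.

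For (1), assuming $\Phi(y,2t)\leq C\Phi(y,t)+h(y)$ for a.a.\ $y\in V$ with $h\in L^1(V)$, I would set $h_W:=h\circ T^{-1}$ and substitute $y=T^{-1}(x)$ to obtain $\Phi_W(x,2t)\leq C\Phi_W(x,t)+h_W(x)$ for a.a.\ $x\in W$; the change of variables $x=T(y)$ with unit Jacobian gives $\|h_W\|_{L^1(W)}=\|h\|_{L^1(V)}<\infty$. For (2), I would take any open ball $B\subset \R^d$ with $|B|<\delta$ and observe that since $T^{-1}$ is an isometry, $T^{-1}(B)$ is again an open ball with $|T^{-1}(B)|=|B|<\delta$. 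Then for a.a.\ $x,y\in B\cap W$ the images $T^{-1}(x),T^{-1}(y)$ lie in $T^{-1}(B)\cap V$, and applying (A1) for $\Phi$ at these two points and rewriting via $\Phi=\Phi_W\circ T$ yields (A1) for $\Phi_W$ with the same pair $(\beta,\delta)$.

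For (3), I would first check that $G_T$ is well defined. Given $u\in W^{1,\Phi_W}(W)\subset W^{1,1}_{loc}(W)$, Theorem \ref{Integration by substitution formula} applied to $T\in C^1(V,W)$ yields $u\circ T\in W^{1,1}_{loc}(V)$ with the chain rule
\[
\partial_i(u\circ T)(x)=\sum_{j=1}^d (\partial_j u)(T(x))\,R_{ji},
\]
where $R=(R_{ji})$ is the rotation matrix of $T$. The substitution $y=T(x)$ with $|\det J_T|=1$ gives, for every $\lambda>0$,
\[
I_\Phi(\lambda G_T u)=\int_V \Phi(x,\lambda|u(T(x))|)\,dx=\int_W \Phi_W(y,\lambda|u(y)|)\,dy=I_{\Phi_W}(\lambda u),
\]
so $\|G_T u\|_\Phi=\|u\|_{\Phi_W}$; applying the same identity to each $(\partial_j u)\circ T$ together with the chain rule yields $\|\partial_i G_T u\|_\Phi \leq \sum_{j=1}^d |R_{ji}|\,\|\partial_j u\|_{\Phi_W}$, so $G_T$ is linear and bounded. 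Since $T^{-1}$ is again a rigid motion with $(T^{-1})^{-1}=T$, the symmetric argument produces a bounded two-sided inverse $G_{T^{-1}}:W^{1,\Phi}(V)\to W^{1,\Phi_W}(W)$, making $G_T$ an isomorphism. The only step requiring care---and it is really just bookkeeping---is verifying the chain rule globally on $V$ for $u\circ T$, which is immediate from Theorem \ref{Integration by substitution formula} because $T$ is a $C^1$ diffeomorphism with constant Jacobian matrix.
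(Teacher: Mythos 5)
Your proposal is correct and follows essentially the same route as the paper's proof: transfer of convexity and measurability through the measure-preserving bijection $T$, the substitution $h_W=h\circ T^{-1}$ with unit Jacobian for $\Delta_2$, the observation that $T^{-1}$ maps balls to balls of the same measure for (A1), and the modular identity $I_\Phi(\lambda G_T u)=I_{\Phi_W}(\lambda u)$ together with the constant-matrix chain rule for (3). The only cosmetic difference is that you invoke Theorem \ref{Integration by substitution formula} (after a routine localization, since it is stated for $W^{1,1}$ rather than $W^{1,1}_{loc}$) to obtain the chain rule for $u\circ T$, whereas the paper re-derives weak differentiability of $G_Tu$ directly by integration by parts against test functions and then estimates the derivatives via modulars; both yield the same bound and the same conclusion.
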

\begin{proof}
Let $V,W$ and $T$ be as in the assumption. Recall that, since $T$ is a rigid motion, we have 
\[T(x)= R(x)+c,\]
for every $x\in \R^d$, where $c\in\R^d$ and the map $R:\R^d\to\R^d$ is a rotation about the origin and is given by the formula
\[R(x)= R\cdot x=Rx,\]
for $x\in \R^d$. Here $R=(r_{ij})_{i,j=1}^d$ is a rotation matrix and the multiplication $\cdot$ is the usual matrix multiplication and $c\in \R^d$. Hence, for every $x\in\R^d$,
\[J_T(x)=R.\]
Since $R$ is a rotation matrix, by definition (see \cite[p. 39]{groove}),
\[\det J_T(x)=1.\]
Now taking any $x\in W$ and letting $y= T^{-1}(x)$, we have $y\in V$. For any $t\geq 0$, $x\in W$,
  \[\Phi_W(x,t)=\Phi(T^{-1}(x),t)=\Phi(y,t).\]
Since $\Phi$ is a MO function on $V$ and $T$ is continuous, we conclude that $\Phi_W$ is a MO function on $W$.

  Assume now that $\Phi\in\Delta_2$. Then there exist $C>0$ and a non-negative function $h\in L^1(V)$, such that for a.a $x\in V$ and $t\geq 0$ we have
\[\Phi(x,2t)\leq C\Phi(x,t)+h(x).\]
Let us define $h_W$ by the formula
\[h_W(x)= h(T^{-1}(x))\]
for  $x\in W$.
We have
\[\int\limits_W h(T(x))dx=\int\limits_V h(x)|\det J_T(x)|dx=\int\limits_V h(x)dx,\]
hence $h_W$ is an non-negative element of $L^1(W)$. For any $x\in W$ and  $y=T^{-1}(x)$, then we have $y\in V$ and
\begin{align*}
   \Phi_W(x,2t)&=\Phi(T^{-1}(x),2t)=\Phi(y,2t)\leq C\Phi (y, t)+h(y)\\&=
   C\Phi(T^{-1}(x),t)+h(T^{-1}(x))=C\Phi_W(x,t)+h_W(x). 
\end{align*}
We conclude that $\Phi_W$ satisfies $\Delta_2$.

  Assume now that $\Phi$ satisfies (A1) condition. Then  there exist constants $\beta,\delta \in (0,1)$ such that for all open balls $B$ with $|B|<\delta$ and almost every $x,y\in B\cap V$ we have for all $t\in \left[\Phi^{-1}(y,1) , \Phi^{-1}\left(y,\frac{1}{|B|}\right)\right]$,
\[\Phi(x,\beta t)\leq \Phi(y,t).\]
Take any ball $B$ with $|B|<\delta$. Since $T$ is a rigid motion $T^{-1}(B)$ is also a ball and $|T^{-1}(B)|=|B|$. Let us take any $x,y\in B\cap W$, then $T^{-1}(x),T^{-1}(y)\in T^{-1}(B)\cap V$ and for all $t\geq 0$, we have $\Phi^{-1}_W(y,t)=\Phi^{-1}(T^{-1}(y),t)$. For any $t\in \left[\Phi_W^{-1}(y,1) , \Phi_W^{-1}\left(y,\frac{1}{|B|}\right)\right]$, we have
\[\Phi_W(x,\beta t)=\Phi(T^{-1}(x),\beta t)\leq \Phi(T^{-1}(y),t)=\Phi_W(y,t)\]
and so $\Phi_W$ satisfies (A1) condition.

  Finally, let us show that $G_T$ is an isomorphism between $W^{1,\Phi_W}(W)$ and $W^{1,\Phi}(V)$. Take any $u\in W^{1,\Phi}(W)$ and $\lambda>0$ such that $I_{\Phi_W}(\lambda u)<\infty$. Then
\begin{align*}
    I_\Phi(\lambda G_Tu)&:=\int\limits_V \Phi(x,\lambda |u(T(x))|)dx=\int\limits_W\Phi(T^{-1}(x
    ),\lambda |u(x)|)|\det J_{T^{-1}}(x)|dx
    \\&=\int\limits_W\Phi_W(x,\lambda|u(x)|)dx=I_{\Phi_W}(\lambda u)<\infty.
\end{align*}
Hence $\|G_Tu\|_\Phi=\|u\|_{\Phi_W}$ and so $G_T$ is a linear isometry between $L^\Phi(V)$ and $L^{\Phi_W}(W)$.

  We will show now that $G_T u$ is a weakly differentiable function on $V$. First take any $v\in C_C^\infty(V)$. Since for all $x\in\R^d$ we have $J_{T^{-1}}(x)=R^{-1}$ and it is well known that an inverse of a rotation matrix $R$ is its transpose, $R^{-1}=R^T$,  we have 
\begin{align*}
    \partial_i (v\circ T^{-1})(x)=\sum\limits_{j=1}^d\partial_j v(T^{-1}(x))\partial_iT^{-1}_j(x)=\sum\limits_{j=1}^d\partial_j v(T^{-1}(x))r_{ij},
\end{align*}
$i=1,\dots, d$ and $x\in W$, where $(r_{ij})_{i,j}^d$ is a rotation matrix.
Hence we have $\nabla (v\circ T^{-1})(x)=R(\nabla v(T^{-1}(x)))$ and so $\nabla v(T^{-1}(x))=R^{T}(\nabla(v\circ T^{-1})(x))$,  in view of $R^{-1}=R^T$. Consequently, for each $i=1,\dots,d$ we have $\partial_iv(T^{-1}(x))=\sum\limits_{j=1}^d\partial_j(v\circ T^{-1})(x)r_{ji}$. Going back to $G_Tu$, $u$ is weakly differentiable, and so  for each $v\in C_C^\infty (V)$ and  for $i=1,\dots,d$
we have
\begin{align*}
  \int\limits_{V}    G_Tu(x)\partial_i v(x)dx &=\int\limits_{V}    u(T(x))\partial_i v(x)dx= \int\limits_W u(x)\partial_iv(T^{-1}(x))|\det J_{T}(x)|dx\\&=
  \int\limits_W u(x)\partial_iv(T^{-1}(x))dx=\int\limits_W u(x)\sum\limits_{j=1}^d \partial_j (v\circ T^{-1})(x)r_{ji}dx\\&=
  -\int\limits_W \left(\sum\limits_{j=1}^d \partial_j u(x)r_{ji}\right)(v\circ T^{-1})(x)dx\\&=-\int\limits_V \left(\sum\limits_{j=1}^d \partial_j u(T(x))r_{ji}\right)v(x)dx.
\end{align*}
Hence $G_Tu$ is weakly differentiable on $V$ and for each $i=1,\dots,d$ we have
\[\partial_i (G_Tu)(x)=\sum\limits_{j=1}^d \partial_j u(T(x))r_{ji}=\sum\limits_{j=1}^d(\partial_j u\circ T)(x)r_{ji},\]
for a.a $x\in V$. Notice that, since $R$ is a rotation matrix we have that $|r_{ji}|\leq 1$ and so for  $i=1,\dots,d$, a.a. $x\in V$,
\[|\partial_i (G_Tu)(x)|\leq\sum\limits_{j=1}^d|(\partial_ju\circ T)(x)|.\]
Now let $\lambda>0$ be such that for all $j=1,\dots, d$ we have
$I_{\Phi_W}(\lambda \partial_j u)<\infty$. For any $i=1,\dots, d$,
\begin{align*}
    I_{\Phi}\left(\frac{\lambda}{d} \partial_iG_Tu\right)&\leq I_{\Phi}\left(\frac{\lambda}{d}\sum\limits_{j=1}^d|(\partial_j u)\circ T|\right)\leq\frac{1}{d} \sum\limits_{j=1}^d I_\Phi(\lambda|(\partial_ju)\circ T|)\\&= 
    \frac{1}{d}\sum\limits_{j=1}^d\int\limits_{V} \Phi (x,\lambda|\partial_ju(T(x))|)dx=\frac{1}{d}\sum\limits_{j=1}^d\int\limits_{W} \Phi (T^{-1}(x),\lambda|\partial_j u(x)|)dx
    \\&=\frac{1}{d}\sum\limits_{j=1}^d I_{\Phi_W}(\lambda\partial_j u)<\infty.\end{align*}
    Taking $\lambda=1/\left(\sum\limits_{j=1}^d\|\partial_ju\|_{\Phi_{W}}\right)$ we arrive at
    \[I_{\Phi}\left(\frac{\lambda}{d} \partial_iG_Tu\right)\leq 1.\]
    Hence, we have that $\|\partial_iG_Tu\|_\Phi\leq d\sum\limits_{j=1}^d\|\partial_j u\|_{\Phi_W}$ for $i=1,\dots, d$ and so
    \[\|G_Tu\|_{W^{1,\Phi}(V)}\leq (d^2 +1) \|u\|_{W^{1,\Phi_W}(W)}.\]
We conclude that $G_T$ is continuous. Notice also that
\[G_T^{-1}=G_{T^{-1}}\]
and since $T^{-1}$ is a rigid motion, the same reasoning as above leads us to conclude that $G_T^{-1}$ is also continuous, and thus $G_T$ is an isomorphism. 
\end{proof}

\begin{theorem} (Local extension)\label{local extension}
  Let $\Omega$ be an open set in $\R^d$ such that its boundary is of class $C^1$. Let $\Phi$ be a MO function on $\Omega$ satisfying both  {\rm (A1)} and $\Delta_2$ property. For any $x_0$ let $(V, \{V_{(t_0,t_1)}\}_{(t_0,t_1)\subset (-1,1)}, \mathcal{R})$ and $(W, \{W_{(t_0,t_1)}\}_{(t_0,t_1)\subset (-1,1)}, \mathcal{R}')$ be the triples from Theorem \ref{reflection property}. Let also $T$ be the rigid motion from the same theorem.
  Define the function
  $\widetilde{\Phi}:W_{(0,1)}\times[0,\infty)\to [0,\infty)$ by the formula 
  \[\widetilde{\Phi}(y,t)=\Phi(T^{-1}(y),t),\]
  where $y\in W_{(0,1)}$ ant $t\geq 0$.
 We  extend $\widetilde{\Phi}$ to $\widetilde{\Phi}^W$ on $W$  as in Lemma \ref{extension of Phi} and define the function $\Phi_V:V\times [0,\infty)\to [0,\infty)$ via the formula
  \[\Phi_V(x,t)=\widetilde{\Phi}^W(T(x),t),\]
  for $t\geq 0$ and $x\in V$. Then, for any compactly supported $u\in W^{1,\Phi}(\Omega\cap V)$, there exists an extension $\widetilde{u}$ defined by the formula
   \[\widetilde{u}(x):=
   \begin{cases} 
      u(x) & x\in V_{(0,1)} \\
      0 &  x\in\bd(\Omega)\cap V \\
      u(\mathcal{R}(x)) & x\in V_{(-1,0)}, 
   \end{cases}
\]
 satisfying $\widetilde{u}\in W^{1,\Phi_V}(V)$ and $\widetilde{u}$ is compactly supported. 
\end{theorem}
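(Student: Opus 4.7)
The strategy is to transport the problem to the straightened picture via the rigid motion $T$, apply the $W^{1,1}$-extension Lemma \ref{local extension for regular domains} there, upgrade the resulting $W^{1,1}$-membership to $W^{1,\widetilde{\Phi}^W}$-membership using the change of variables $\mathcal{R}'$, and finally transport back through $T^{-1}$.

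First, set $g := u \circ T^{-1}$. By Lemma \ref{Phi i rigid motion} applied to the restriction $T:\Omega\cap V\to W_{(0,1)}$, the function $g$ lies in $W^{1,\widetilde{\Phi}}(W_{(0,1)})$ and has compact support $T(\supp u)\subset W_{(0,1)}$; in particular $g\in W^{1,1}(W_{(0,1)})$ by the standard local embedding coming from compactness of the support and local integrability of $\widetilde{\Phi}$. Lemma \ref{local extension for regular domains} then yields an extension $\widetilde{g}\in W^{1,1}(W)$ together with the explicit formula for $\partial_i\widetilde{g}$ on each of $W_{(0,1)}$ and $W_{(-1,0)}$.

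Second, I upgrade this to $\widetilde{g}\in W^{1,\widetilde{\Phi}^W}(W)$. For any measurable $h$ on $W$ and $\lambda>0$,
\[
I_{\widetilde{\Phi}^W}(\lambda h) = \int_{W_{(0,1)}}\widetilde{\Phi}(y,\lambda|h(y)|)\,dy + \int_{W_{(-1,0)}}\widetilde{\Phi}(\mathcal{R}'(y),\lambda|h(y)|)\,dy,
\]
and the substitution $z=\mathcal{R}'(y)$ in the second integral, whose Jacobian equals $1$ by \eqref{Jacobian of R'}, rewrites it as $\int_{W_{(0,1)}}\widetilde{\Phi}(z,\lambda|h(\mathcal{R}'(z))|)\,dz$. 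Taking $h=\widetilde{g}$ yields $I_{\widetilde{\Phi}^W}(\lambda \widetilde g)=2\,I_{\widetilde{\Phi}}(\lambda g)$, which is finite for some $\lambda>0$. For $h=\partial_i\widetilde{g}$, the formula from Lemma \ref{local extension for regular domains} on $W_{(-1,0)}$ combined with the uniform bound $C_f:=\sup_{y'\in Q_{d-1}(0,r')}|\nabla \mathfrak{f}(y')|<\infty$ gives the pointwise inequality $|\partial_i\widetilde{g}(y)|\leq (1+2C_f)\sum_{j=1}^{d}|(\partial_j g)(\mathcal{R}'(y))|$, so after the same substitution the task reduces to modular finiteness of $(1+2C_f)\sum_{j}|\partial_j g|$ against $\widetilde{\Phi}$. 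Convexity of $\widetilde{\Phi}$ distributes the sum across the $d$ terms, and the $\Delta_2$ property of $\widetilde{\Phi}^W$ (inherited from $\widetilde{\Phi}$ by Lemma \ref{extension of Phi}(1)) absorbs the multiplicative constant $(1+2C_f)$ at the cost of shrinking $\lambda$.

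Third, define $\widetilde{u}:=\widetilde{g}\circ T$. A second application of Lemma \ref{Phi i rigid motion}, now with the rigid motion $T^{-1}:W\to V$ and MO function $\Phi_V=\widetilde{\Phi}^W\circ T$, gives $\widetilde{u}\in W^{1,\Phi_V}(V)$. Unwinding the definition of $\widetilde{g}$ and using the identity $\mathcal{R}=T^{-1}\circ\mathcal{R}'\circ T$ from Theorem \ref{reflection property} shows that $\widetilde{u}$ coincides a.e.\ with the piecewise formula in the statement. Compact support is automatic: with $K=\supp u\subset V_{(0,1)}$ one has $\supp \widetilde{u}\subset K\cup \mathcal{R}(K)$, which is a compact subset of $V$ since $\mathcal{R}(V_{(0,1)})=V_{(-1,0)}$ and $\mathcal{R}$ is continuous. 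The main technical obstacle is precisely the modular step for the derivatives on $W_{(-1,0)}$: the ``cross'' term $2(\partial_d g)(\mathcal{R}'(y))\,\partial_i\mathfrak{f}(y')$ is what forces the simultaneous use of the $C^1$-regularity of $\bd(\Omega)$ (to bound $\nabla\mathfrak{f}$) and of $\Delta_2$ for $\widetilde{\Phi}^W$ — exactly the two structural features Lemma \ref{extension of Phi} was engineered to preserve.
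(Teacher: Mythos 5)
Your proposal is correct and follows essentially the same route as the paper's proof: straighten via $T$, show $g\in W^{1,1}(W_{(0,1)})$, apply Lemma \ref{local extension for regular domains} to extend, upgrade the derivative estimates to modular finiteness for $\widetilde{\Phi}^W$ using the reflection substitution together with convexity and $\Delta_2$, and transport back with $T^{-1}$ via the isomorphism $G_T$. The one imprecision worth noting: to place $g$ in $W^{1,1}(W_{(0,1)})$ you appeal to ``local integrability of $\widetilde{\Phi}$,'' but local integrability is not among the stated hypotheses; the paper gets this step from condition (A1) via Proposition \ref{zanurzenie w L1} of \cite{KamZyl3} (applied to $u$ on $\Omega\cap V$, then transported by Theorem \ref{Integration by substitution formula}), and you should cite that mechanism rather than a generic local-integrability embedding.
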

\begin{proof}
Fix $x_0\in\bd (\Omega)$. Let $(V, \{V_{(t_0,t_1)}\}_{(t_0,t_1)\subset (-1,1)}, \mathcal{R})$,  $(W,  \{W_{(t_0,t_1)}\}_{(t_0,t_1)\subset (-1,1)}, \mathcal{R'})$ be the triples from Theorem \ref{reflection property} and $T$ the rigid motion from the same theorem. By Lemma \ref{Phi i rigid motion} the function $\widetilde{\Phi}$ satisfies both $\Delta_2$ and (A1) conditions. By Lemma \ref{extension of Phi} the function $\widetilde{\Phi}^W$ also satisfies $\Delta_2$ and (A1) conditions. Moreover for $y\in W$ and $t\geq 0$,
   \[\widetilde{\Phi}^W(y,t)=
   \begin{cases} 
      \Phi(T^{-1}(y),t) & y\in W_{(0,1)} \\
      0 &  y\in S \\
      \Phi(T^{-1}\mathcal({R'}(y)),t) & y\in W_{(-1,0)}. 
         \end{cases}\]
    By construction of $V$ and $W$, the rigid motion $T$ maps $V$ onto $W$. Therefore any $x\in V$ can be uniquely written as $x=T^{-1}(y)$ for some $y\in W$. Since $\mathcal{R}=T^{-1}\circ \mathcal{R'}\circ T$, we arrive at

     \[\Phi_V(x,t)=
   \begin{cases} 
      \Phi(x,t) & x\in V_{(0,1)} \\
      0 &  x\in\bd(\Omega)\cap V \\
      \Phi(\mathcal{R}(x),t) & x\in V_{(-1,0)},
      
   \end{cases}\]
where $x\in V$, and $t\geq 0$.

  Taking any compactly supported $u\in W^{1,\Phi}(\Omega\cap V)$, by the fact that $\Phi$ satisfies (A1) and by Proposition \ref{zanurzenie w L1} we have that $u\in L^1(\Omega\cap V)$ and for each $i=1,\dots, d$, $\partial_i u\in L^1(\Omega\cap V)$. Therefore $u\in W^{1,1}(\Omega)$. We define $g: W_{(0,1)}\to \C$ by the formula
\[g(y)=u(T^{-1}(y)), \text{ for } y\in W_{(0,1)}.\]
In view of $V\cap\Omega= V_{(0,1)}$ and the fact that $T$ is a rigid motion and $T(V_{(0,1)})=W_{(0,1)}$, by Theorem \ref{Integration by substitution formula} we conclude that $g\in W^{1,1}(W_{(0,1)})$. Hence by Lemma \ref{local extension for regular domains} the function
\[\widetilde{g}(y)=\begin{cases}g(y) & y\in W_{(0,1)}\\
0 & y\in W\setminus (W_{(-1,0)}\cup W_{(0,1)})\\
g(\mathcal{R}'(y)) & y\in W_{(-1,0)}
\end{cases}\]
is an element of $W^{1,1}(W)$ and for a.a $y\in W$ we have
\[\partial_i\widetilde{g}(y)=\begin{cases}
\partial_i g(y) & (y',y_d)\in W_{(0,1)}, \ i=1,\dots, d \\
(\partial_i g)(\mathcal{R}'(y))+ 2(\partial_d g)(\mathcal{R}'(y))\partial_i \mathfrak{f}(y') & (y',y_d)\in W_{(-1,0)} \ i=1,\dots, d-1 \\
-(\partial_d g)(\mathcal{R}'(y)) & (y',y_d)\in W_{(-1,0)}, \ i=d,
\end{cases}
\] 
where $\mathfrak{f}$ is the function associated with $x_0$ via the definition of $\bd(\Omega)$ being of class $C^1$ (Definition \ref{boundary of class C^m}). By (\ref{ograniczenie pochdnych f na kostce r'}) we have that $|\partial_i \mathfrak{f}(y')|<C$ for some constant $C>0$, all $i=1,\dots,d-1$, and any $y=(y',y_d)\in W$. Therefore, for any $\lambda>0$ and $i=1,\dots,d-1$, using a substitution for $\mathcal{R'}(y)$,
\begin{align*}
    I_{\widetilde{\Phi}^W}(\lambda\partial_i\widetilde{g})&\leq\int\limits_{W_{(0,1)}}\widetilde{\Phi}(y,\lambda |\partial_ig(y)|)dy+\frac{1}{2}\int\limits_{W_{(-1,0)}}\widetilde{\Phi}(\mathcal{R'}(y), 2\lambda|\partial_i g(\mathcal{R'}(y))|)dy \\&+\frac{1}{2}\int\limits_{W_{(-1,0)}}\widetilde{\Phi}(\mathcal{R'}(y), 4C\lambda|\partial_d g(\mathcal{R'}(y))|)dy
    =\int\limits_{W_{(0,1)}}\widetilde{\Phi}(y,\lambda |\partial_ig(y)|)dy\\&+\frac{1}{2}\int\limits_{W_{(0,1)}}\widetilde{\Phi}(y,2\lambda |\partial_ig(y)|)dy+\frac{1}{2}\int\limits_{W_{(0,1)}}\widetilde{\Phi}(y,4C\lambda |\partial_dg(y)|)dy
    \\&=\int\limits_{V_{(0,1)}}{\Phi}(x,\lambda |\partial_ig(T(x))|)dx\\&+\frac{1}{2}\int\limits_{V_{(0,1)}}{\Phi}(x,2\lambda |\partial_ig(T(x))|)dx+\frac{1}{2}\int\limits_{V_{(0,1)}}{\Phi}(x,4C\lambda |\partial_dg(T(x))|)dx.
\end{align*}
Let $T(x)=R(x)+c$, where $R=(r_{ij})_{i,j=1}^d$ is a rotation matrix and $c\in \R^d$. By definition of $g$ for $x\in V$, \[\partial_ig(T(x))=\sum\limits_{j=1}^d\partial_{j}u(x)r_{ji},\]
for all $i=1,\dots,d$. Since $R$ is a rotation matrix we have $|r_{ji}|\leq 1$ for any $1\leq i,j\leq d$. Hence, by $\Delta_2$ condition, for $i=1,\dots,d-1$,

\begin{align*}
     I_{\widetilde{\Phi}^W}(\lambda\partial_i\widetilde{g})&\leq \frac{1}{d}\sum\limits_{j=1}^d \int_{V_{(0,1)}}\Phi (x,d\lambda |\partial_j u(x)|)dx +\frac{1}{2d}\sum\limits_{j=1}^d \int_{V_{(0,1)}}\Phi (x,2d\lambda |\partial_j u(x)|)dx\\&+\frac{1}{2d}\sum\limits_{j=1}^d \int_{V_{(0,1)}}\Phi (x,4Cd\lambda |\partial_j u(x)|)dx<\infty.
\end{align*}
For $i=d$ similarly, we have
\begin{align*}
    I_{\widetilde{\Phi}^W}(\lambda\partial_d\widetilde{g})&\leq \frac{1}{d}\sum\limits_{j=1}^d \int_{V_{(0,1)}}\Phi (x,d\lambda |\partial_j u(x)|)dx +\frac{1}{d}\sum\limits_{j=1}^d \int_{V_{(0,1)}}\Phi (x,d\lambda |\partial_j u(x)|)dx<\infty.
\end{align*}

Since $\mathcal{R'}$ is a continuous involution it maps compact sets to compact sets. Thus $\widetilde{g}$ is compactly supported. Notice now that 
\[\widetilde{u}=\widetilde{g}\circ T=G_T\widetilde{g},\]
where $G_T$ is the operator from Lemma \ref{Phi i rigid motion}. Hence we conclude that $\widetilde{u}\in W^{1,\Phi_V}(V)$. By the fact that $T$ is a rigid motion we also notice that $\widetilde{u}$ is compactly supported. This completes the proof.

\end{proof}
\begin{theorem} \rm{(}Partition of unity\rm{)}\cite[Theorem 3.15, p. 65]{Adams}
Let $A$ be a subset of $\R^d$ and $\mathcal{U}$ be an open covering of $A$. A family of functions $\{\psi_U\}_{U\in\mathcal{U}}\subset C_C^\infty(\R^d)$ is called a partition of unity associated with the covering $\mathcal{U}$ of $A$, if
\begin{enumerate}
    \item[$(1)$] For every $U\in\mathcal{U}$ and $x\in\R^d$, $0\leq\psi_U(x)\leq 1$.
    \item[$(2)$] For any compact subset $K\subset A$ all but finitely many $\psi_U$ vanish identically on $K$.
    \item[$(3)$] For any $U\in\mathcal{U}$ we have $\supp \psi_U\subset U$.
    \item[$(4)$] For every $x\in A$, $\sum\limits_{U\in\mathcal{U}}\psi_U(x)=1$.
\end{enumerate}
\end{theorem}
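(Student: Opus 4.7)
The content of the theorem is the existence of a family $\{\psi_U\}_{U\in\mathcal U}$ satisfying (1)--(4), i.e.\ a smooth partition of unity subordinate to $\mathcal U$. The plan is the standard paracompactness construction specialized to $\R^d$: exhaust by compact shells, cover each shell by finitely many bumps with supports strictly inside elements of $\mathcal U$, sum, and normalize.

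First I would set $K_n:=\overline{B(0,n)}$ for $n\ge 1$ and $K_0=K_{-1}:=\emptyset$, so each $L_n:=K_n\setminus K_{n-1}^\circ$ is compact and lies inside the open thickened shell $L_n^+:=K_{n+1}^\circ\setminus K_{n-2}$. For every $x\in A\cap L_n$, pick $U_x\in\mathcal U$ containing $x$ and three concentric open cubes $Q_x^{**}\subset\overline{Q_x^{**}}\subset Q_x^{*}\subset\overline{Q_x^{*}}\subset Q_x\subset\overline{Q_x}\subset U_x\cap L_n^+$. By compactness of $A\cap L_n$, finitely many of the innermost cubes suffice, say $Q_{n,i}^{**}$, $i=1,\dots,m_n$; I retain the associated $Q_{n,i}^{*},Q_{n,i},U_{n,i}$.

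Next, applying the standard mollifier $J_{(r)}$ from the proof of Lemma \ref{characterization of weak differentiability} to the characteristic function of a cube intermediate between $Q_{n,i}^{**}$ and $Q_{n,i}^{*}$ with sufficiently small $r$, I obtain $\phi_{n,i}\in C_C^\infty(\R^d)$ satisfying $0\le\phi_{n,i}\le 1$, $\phi_{n,i}\equiv 1$ on $\overline{Q_{n,i}^{**}}$, and $\supp\phi_{n,i}\subset\overline{Q_{n,i}^{*}}\subset U_{n,i}$. Because $\supp\phi_{n,i}\subset L_n^+$ and each compact set in $\R^d$ meets only finitely many $L_n^+$, the family $\{\phi_{n,i}\}$ is locally finite and $\Sigma:=\sum_{n,i}\phi_{n,i}\in C^\infty(\R^d)$; moreover $\Sigma\ge 1$ on the open neighborhood $\Omega^{*}:=\bigcup_{n,i}Q_{n,i}^{**}$ of $A$. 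Fixing the assignment $(n,i)\mapsto U_{n,i}$ once and for all, I would then define, for each $U\in\mathcal U$,
\[
\psi_U(x):=
\begin{cases}
\sum_{(n,i):\,U_{n,i}=U}\phi_{n,i}(x)/\Sigma(x), & x\in\Omega^{*},\\
0, & x\in\R^d\setminus\Omega^{*},
\end{cases}
\]
and $\psi_U\equiv 0$ when no pair was assigned to $U$. Then $0\le\psi_U\le 1$, $\supp\psi_U\subset U$, local finiteness of $\{\psi_U\}$ on every compact $K\subset A$ follows from that of $\{\phi_{n,i}\}$, and $\sum_{U\in\mathcal U}\psi_U(x)=1$ for every $x\in A\subset\Omega^{*}$.

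The principal technical obstacle is global smoothness of $\psi_U$ across $\partial\Omega^{*}$, where $\Sigma$ may vanish and the quotient $\phi_{n,i}/\Sigma$ is a priori defined only on $\Omega^{*}$. I would resolve this by tightening the first step so that each $\supp\phi_{n,i}$ lies inside $\Omega^{*}$ itself. Concretely, first replace $\mathcal U$ by a locally finite open refinement $\{V_\alpha\}$ with $\overline{V_\alpha}\subset U_\alpha\in\mathcal U$, then shrink to $\{W_\alpha\}$ with $\overline{W_\alpha}\subset V_\alpha$ and $\bigcup_\alpha W_\alpha=\bigcup_\alpha V_\alpha\supset A$, and build $\phi_\alpha\in C_C^\infty(\R^d)$ with $\phi_\alpha\equiv 1$ on $\overline{W_\alpha}$ and $\supp\phi_\alpha\subset V_\alpha$. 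Then $\supp\phi_\alpha\subset V_\alpha\subset\bigcup_\beta V_\beta=\bigcup_\beta W_\beta$, so for any $x\in\supp\phi_\alpha$ there is some $W_\beta\ni x$ where $\phi_\beta\equiv 1$, giving $\Sigma\ge 1$ on an open neighborhood of the compact set $\supp\phi_\alpha$. Hence $\phi_\alpha/\Sigma$ is smooth and compactly supported in $\Omega^{*}$ and extends by zero to a smooth compactly supported function on $\R^d$, from which properties (1)--(4) follow as stated.
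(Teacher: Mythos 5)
The paper does not prove this statement; it cites it to Adams--Fournier (Theorem 3.15) and invokes it exactly once, for a \emph{finite} covering of the compact set $\esssupp f$ in the proof of Theorem \ref{main result of 6}. So there is no proof in the paper to compare against; only the internal correctness of your construction is at issue.

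Your construction is the standard one --- Whitney-type shell exhaustion of $\R^d$, finitely many mollified cube bumps per shell, normalization by the locally finite sum $\Sigma$ --- and your diagnosis and repair of the smoothness problem across $\partial\Omega^{*}$ (shrink the cover twice so that $\Sigma\ge 1$ on a neighborhood of each $\supp\phi_\alpha$, then extend $\phi_\alpha/\Sigma$ by zero) is correct. What you have actually proved is Adams's Theorem 3.15 in the form he states it: a locally finite family $\Psi=\{\phi_\alpha/\Sigma\}_\alpha\subset C_C^\infty(\R^d)$ with $0\le\psi\le 1$, each $\supp\psi$ contained in some member of $\mathcal U$, all but finitely many vanishing on any compact $K\subset A$, and $\sum_{\psi\in\Psi}\psi\equiv 1$ on $A$.

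The gap is the very last step, where you pass from $\Psi$ to $\psi_U:=\sum_{\alpha:\,U_\alpha=U}\phi_\alpha/\Sigma$. By local finiteness this sum is $C^\infty(\R^d)$ and $\supp\psi_U\subset\bigcup_{\alpha:\,U_\alpha=U}\overline{V_\alpha}\subset U$, so properties (1)--(4) do hold; but $\psi_U$ need not be \emph{compactly} supported once infinitely many $\alpha$ are assigned to the same $U$, so the requirement $\{\psi_U\}_{U\in\mathcal U}\subset C_C^\infty(\R^d)$ in the statement may fail. Concretely, with $A=\R^d$ and $\mathcal U=\{\R^d\}$ every bump is assigned to $U=\R^d$ and $\psi_{\R^d}\equiv 1$, which is not compactly supported; hence the theorem as literally transcribed in the paper (with the $U$-indexed family lying in $C_C^\infty(\R^d)$) is false for general infinite covers. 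This is a defect of the paper's restatement rather than of your argument: Adams indexes the partition by $\psi\in\Psi$, not by $U\in\mathcal U$, and does not claim compact support survives grouping. Since the paper uses the result only for a finite cover of a compact set, where each $\psi_U$ is a finite sum of $C_C^\infty$ bumps, the application in the paper is unaffected. Your proof would be fully correct if you either stated the conclusion in Adams's $\Psi$-indexed form, or restricted to finite covers $\mathcal U$, or dropped the compact support requirement on $\psi_U$ (keeping only $\supp\psi_U\subset U$ and local finiteness).
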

Recall the following approximation results proved in the first part \cite{KamZyl3}.

\begin{corollary}\cite[Theorem 3.6]{KamZyl3}\label{cor smooth cutoff}
Let $\Phi$ be a MO function on an open subset $\Omega\subset\R^d$ satisfying  $\Delta_2$ condition and $k\in\N$. For any $f\in W^{k,\Phi}(\Omega)$ we have
\[\lim\limits_{R\to\infty}\|f-f_R\|_{W^{k,\Phi}}=0.\]
\end{corollary}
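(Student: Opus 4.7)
The plan is to invoke \cite[Theorem 3.6]{KamZyl3} directly, since the statement is already established in the first part of this work. Nonetheless, the natural self-contained proof strategy is as follows. The function $f_R$ is the truncation $f_R := f\eta_R$, where $\eta_R\in C_C^\infty(\R^d)$ is a standard cutoff satisfying $\eta_R\equiv 1$ on $B(0,R)$, $\supp\eta_R\subset B(0,2R)$, and $\|\partial^\alpha\eta_R\|_\infty\leq C_\alpha R^{-|\alpha|}$ for every multi-index $\alpha$ with $|\alpha|\leq k$. Because each such $\eta_R$ is bounded with bounded derivatives of all orders and $f\in W^{k,\Phi}(\Omega)$, the Leibniz rule for weak derivatives applies and yields
\[
\partial^\alpha(f-f_R)(x)=\partial^\alpha f(x)(1-\eta_R(x))-\sum_{0<\beta\leq\alpha}\binom{\alpha}{\beta}\partial^\beta\eta_R(x)\,\partial^{\alpha-\beta}f(x)
\]
for almost every $x\in\Omega$ and every $|\alpha|\leq k$.

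I would treat the two types of terms separately. For the leading term $\partial^\alpha f\,(1-\eta_R)$, the integrand is dominated pointwise by $|\partial^\alpha f|$ and tends to zero almost everywhere on $\Omega$ as $R\to\infty$. Since $\Phi$ satisfies $\Delta_2$ and $\partial^\alpha f\in L^\Phi(\Omega)$, one has $I_\Phi(\lambda\,\partial^\alpha f)<\infty$ for every $\lambda>0$, so the Lebesgue Dominated Convergence Theorem gives $I_\Phi(\lambda\,\partial^\alpha f(1-\eta_R))\to 0$ for each fixed $\lambda>0$; the $\Delta_2$ condition then upgrades this modular convergence to $\|\partial^\alpha f(1-\eta_R)\|_\Phi\to 0$.

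For the remaining Leibniz terms, each factor $\partial^\beta\eta_R$ with $|\beta|\geq 1$ is bounded by $C_\alpha R^{-1}$ and supported in the annulus $B(0,2R)\setminus B(0,R)$, so the corresponding integrand is dominated, up to a constant, by $|\partial^{\alpha-\beta} f|\chi_{\{|x|\geq R\}}$, which vanishes almost everywhere as $R\to\infty$. The same dominated convergence argument, again combined with $\Delta_2$, shows that these terms also converge to zero in the $\Phi$-norm. Summing over all $|\alpha|\leq k$ produces $\|f-f_R\|_{W^{k,\Phi}}\to 0$.

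The only genuine difficulty is the passage from modular convergence to norm convergence; once one knows that $I_\Phi(\lambda g_n)\to 0$ for every $\lambda>0$, the $\Delta_2$ property delivers $\|g_n\|_\Phi\to 0$ via a standard and short argument. Everything else is routine application of the Leibniz rule and Lebesgue's theorem, so the essential content is that $\Delta_2$ provides enough integrability of $\Phi(x,\lambda|\partial^\alpha f(x)|)$ at every scale $\lambda>0$ to make dominated convergence available.
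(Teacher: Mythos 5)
Your proposal is correct, and it matches the approach of the paper: the paper states this result as a corollary and simply cites \cite[Theorem 3.6]{KamZyl3} rather than reproving it, which is precisely what you propose to do, and the self-contained argument you sketch is the canonical truncation argument that the companion paper uses. The Leibniz decomposition
\[
\partial^\alpha(f-f_R)=\partial^\alpha f(1-\eta_R)-\sum_{0<\beta\leq\alpha}\binom{\alpha}{\beta}\partial^\beta\eta_R\,\partial^{\alpha-\beta}f
\]
together with the scale-invariant decay $\|\partial^\beta\eta_R\|_\infty\lesssim R^{-|\beta|}$ and dominated convergence at every scale $\lambda>0$ is exactly the right engine. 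One small expositional point: you write that ``once one knows $I_\Phi(\lambda g_n)\to 0$ for every $\lambda>0$, the $\Delta_2$ property delivers $\|g_n\|_\Phi\to 0$.'' In fact that last implication is immediate from the definition of the Luxemburg norm and needs no $\Delta_2$ at all (given $\varepsilon>0$, take $\lambda=1/\varepsilon$; eventually $I_\Phi(g_n/\varepsilon)\leq 1$, hence $\|g_n\|_\Phi\leq\varepsilon$). The place where $\Delta_2$ is genuinely indispensable is upstream: it is what guarantees $I_\Phi(\lambda\,\partial^\alpha f)<\infty$ for \emph{every} $\lambda>0$, so that the Dominated Convergence Theorem can be run at every scale. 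With that attribution corrected, the argument is complete and sound.
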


\begin{corollary}\label{norm approximation of compactly supported functioms}\cite[Corollary 5.6]{KamZyl3}
Let $k\in\N$, $\Omega\subset\R^d$ be open and $\Phi$ be a MO function defined on $\Omega$ satisfying both {\rm (A1)} and $\Delta_2$ conditions. For every compactly supported $f\in W^{k,\Phi}(\Omega)$ such that $\esssupp f\subset \Omega$ there exists a sequence of smooth functions $\{u_n\}_{n=1}^\infty\subset C_C^\infty(\Omega)$ such that 
\[\lim\limits_{n\to\infty}\|f-u_n\|_{W^{k,\Phi}}=0.\]

\end{corollary}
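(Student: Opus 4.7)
The plan is to prove this via mollification, using the quantitative content of (A1) to circumvent the lack of translation continuity in Musielak-Orlicz spaces. Let $J$ be the standard mollifier introduced earlier in the text, with family $J_{(r)}(x) = r^{-d}J(x/r)$. Because $\esssupp f$ is a compact subset of $\Omega$, the distance $\delta_0 := \dist(\esssupp f, \R^d\setminus\Omega)$ is strictly positive. Extending $f$ (and each weak derivative $\partial^\alpha f$, $|\alpha|\leq k$) by zero to $\R^d$, set
\[
u_r := (f * J_{(r)})\big|_\Omega, \qquad 0 < r < \delta_0/2.
\]
Then $u_r \in C^\infty(\Omega)$, $\esssupp u_r \subset \esssupp f + \overline{B(0,r)}$ is a compact subset of $\Omega$, so $u_r \in C_C^\infty(\Omega)$, and a mollifier argument like the one used inside Lemma \ref{characterization of weak differentiability} gives $\partial^\alpha u_r = (\partial^\alpha f) * J_{(r)}$ on $\Omega$ for every $|\alpha|\leq k$.

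It therefore suffices to prove $\|(\partial^\alpha f)*J_{(r)} - \partial^\alpha f\|_\Phi \to 0$ as $r \to 0^+$ for each such $\alpha$. Thanks to $\Delta_2$, this is equivalent to modular convergence $I_\Phi(\lambda[(\partial^\alpha f)*J_{(r)} - \partial^\alpha f]) \to 0$ for every $\lambda > 0$. The decisive ingredient is (A1): for $x, y$ in a common small ball $B$ and $t$ in the admissible range determined by $|B|$, one has $\Phi(x, \beta t) \leq \Phi(y, t)$. Combined with Jensen's inequality for the probability measure $J_{(r)}(x-\cdot)\,dy$ and the convex function $t \mapsto \Phi(x, \lambda t)$, this yields a pointwise majorant of the form
\[
\Phi\bigl(x,\, c\lambda\,|((\partial^\alpha f)*J_{(r)})(x)|\bigr) \leq \bigl(\Phi(\cdot, \lambda|\partial^\alpha f|) * J_{(r)}\bigr)(x) + (\text{small remainder})
\]
for $r$ small enough, with the remainder controlled by local integrability of $\Phi$. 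Since $\Phi(\cdot, \lambda|\partial^\alpha f|) \in L^1(\Omega)$ for some $\lambda > 0$, classical $L^1$-mollifier convergence drives the main term to its limit, and $\Delta_2$ absorbs the constant $c$ at the end.

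The principal obstacle is precisely this last step. In the classical $L^p$ setting, the corresponding result is a one-liner from translation continuity $\|\tau_h g - g\|_p \to 0$; as emphasized in the introduction, that property fails in general MO spaces, and translations may fail even to preserve $L^\Phi$. The role of (A1) is to serve as the substitute: it controls the spatial variation of $\Phi(\cdot,t)$ on balls of measure $< \delta$, in exactly the range of values reached by the mollification. Assembling the Jensen/(A1) estimate carefully on the sublevel sets of $|\partial^\alpha f|$, and using $\Delta_2$ to clean up the constant $c$, is the technical heart of the argument. Once the modular convergence is established, summing over $|\alpha|\leq k$ gives $\|f - u_r\|_{W^{k,\Phi}} \to 0$, and taking any null sequence $r_n \to 0^+$ yields the required $\{u_n\} \subset C_C^\infty(\Omega)$.
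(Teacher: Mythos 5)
This corollary is not proved in the paper you are given; it is quoted verbatim from the companion paper, \cite[Corollary 5.6]{KamZyl3}, so there is no in-text proof to compare against. Your sketch is nonetheless the natural (and, to the best of my knowledge, the intended) route: mollify $f$, use compactness of $\esssupp f$ to keep $u_r\in C_C^\infty(\Omega)$, commute $\partial^\alpha$ with the convolution, pass from norm to modular convergence via $\Delta_2$, and use (A1) plus Jensen's inequality as the surrogate for translation continuity. This is precisely the role the paper assigns to (A1), so the high-level strategy is right.

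Two points, however, are left in a state that does not yet constitute a proof — and they are exactly the places you label the ``technical heart,'' so be aware that they are not routine. First, the displayed majorant $\Phi\bigl(x,\,\beta\lambda\,|(g*J_{(r)})(x)|\bigr)\le \bigl(\Phi(\cdot,\lambda|g|)*J_{(r)}\bigr)(x)+(\text{remainder})$, together with $L^1$-convergence of $\Phi(\cdot,\lambda|g|)*J_{(r)}$, only gives control of $I_\Phi(\beta\lambda\, g*J_{(r)})$; it does not by itself yield $I_\Phi\bigl(\beta\lambda(g*J_{(r)}-g)\bigr)\to 0$. You must additionally invoke a.e.\ convergence $g*J_{(r)}\to g$ at Lebesgue points and then close with Vitali's convergence theorem (the $L^1$-convergent majorant supplying uniform integrability), or, equivalently, upgrade the majorant to uniform boundedness of the mollification operators on $L^\Phi$, approximate $g$ by a function in $C_C(\Omega)$ (density under $\Delta_2$ must be cited or proved), and split $\|g*J_{(r)}-g\|_\Phi$ three ways. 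Second, the ``small remainder'' is not trivially small: (A1) is silent when $\lambda|g(y)|>\Phi^{-1}(y,1/|B|)$, and on that set you have no pointwise bound on $\Phi(x,\beta\lambda|g(y)|)$ from (A1) alone. One has to argue separately — for instance, Chebyshev gives $|\{y:\Phi(y,\lambda|g(y)|)>1/|B_r|\}|=O(r^d)$, and this, combined with $\|J_{(r)}\|_\infty= O(r^{-d})$ and the absolute continuity of $\int\Phi(\cdot,\lambda|g|)$, controls that contribution as $r\to 0$. Until those two pieces are written out, the argument is a strategy statement rather than a proof; but the strategy does match what the cited corollary requires.
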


Finally we are able to prove  the main result of this paper.

\begin{theorem}\label{main result of 6}
 Let $\Omega$ be an open set in $\R^d$ such that its boundary is of class $C^1$. Let $\Phi$ be a MO function on $\Omega$ satisfying both  {\rm (A1)} and $\Delta_2$ properties. Then the set of restrictions of functions from $C_C^\infty(\R^d) $ to $\Omega$ is dense in $W^{1,\Phi}(\Omega)$.
\end{theorem}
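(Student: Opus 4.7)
The plan is to combine the two approximation corollaries already in hand (Corollaries \ref{cor smooth cutoff} and \ref{norm approximation of compactly supported functioms}) with a partition-of-unity decomposition and the local extension result Theorem \ref{local extension}. Given $f\in W^{1,\Phi}(\Omega)$ and $\varepsilon>0$, I would first apply Corollary \ref{cor smooth cutoff} to replace $f$ by a compactly supported $g\in W^{1,\Phi}(\Omega)$ with $\|f-g\|_{W^{1,\Phi}}<\varepsilon/2$; it then suffices to approximate $g$ by restrictions of elements of $C_C^\infty(\R^d)$.

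Next I would cover the compact set $K:=\overline{\esssupp g}\subset\R^d$ by finitely many open sets of two types: (i) open balls $B_x$ with $\overline{B_x}\subset\Omega$ for $x\in K\cap\Omega$, and (ii) for each $x_0\in K\cap\bd(\Omega)$ the neighborhood $V=V(x_0)$ furnished by Theorem \ref{reflection property}. Taking a subordinate partition of unity $\{\psi_j\}_{j=1}^m$ we get $g=\sum_{j=1}^m g\psi_j$ on $\Omega$, with each product $g\psi_j$ lying in $W^{1,\Phi}(\Omega)$ (via the product rule for weak derivatives, using boundedness of $\psi_j,\nabla\psi_j$ together with $\Delta_2$ to control the modular of the $\lambda$-scaled derivatives) and compactly supported inside the corresponding chart $U_j$.

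For an interior piece, $\esssupp(g\psi_j)$ is a compact subset of $\Omega$, and Corollary \ref{norm approximation of compactly supported functioms} immediately supplies a sequence in $C_C^\infty(\Omega)\subset C_C^\infty(\R^d)|_\Omega$ converging to $g\psi_j$ in $W^{1,\Phi}(\Omega)$. For a boundary piece, $(g\psi_j)|_{V\cap\Omega}$ is a compactly supported element of $W^{1,\Phi}(\Omega\cap V)$, so Theorem \ref{local extension} produces an extension $\widetilde{g\psi_j}\in W^{1,\Phi_V}(V)$ which is compactly supported in $V$. By Lemmas \ref{extension of Phi} and \ref{Phi i rigid motion} the MO function $\Phi_V$ inherits $\Delta_2$ and (A1) from $\Phi$, so Corollary \ref{norm approximation of compactly supported functioms} applies on $V$ and yields $u_n\in C_C^\infty(V)\subset C_C^\infty(\R^d)$ with $u_n\to\widetilde{g\psi_j}$ in $W^{1,\Phi_V}(V)$. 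Because $\Phi_V$ coincides with $\Phi$ on $V_{(0,1)}=V\cap\Omega$, the restriction map $W^{1,\Phi_V}(V)\to W^{1,\Phi}(V\cap\Omega)$ is non-expansive on the Luxemburg--Sobolev norms, so $u_n|_\Omega\to g\psi_j$ in $W^{1,\Phi}(\Omega)$ (noting $g\psi_j$ vanishes outside $V$). Summing the $m$ approximating sequences and choosing $n$ large gives an element of $\operatorname{span}\{u|_\Omega:u\in C_C^\infty(\R^d)\}$ within distance $\varepsilon$ of $f$, completing the argument.

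The main obstacle is the boundary step, and essentially all the preceding work of Section \ref{section 2} is devoted to making it work: the reflected domain $V$ and reflection map $\mathcal{R}$ of Theorem \ref{reflection property}, the weak-derivative reflection formula of Lemma \ref{local extension for regular domains}, the path construction of Lemma \ref{lemat o krzywej} used to transport (A1) through $\mathcal{R}$ in Lemma \ref{extension of Phi}, and the rigid-motion transfer of Lemma \ref{Phi i rigid motion}. Once these are in place the present theorem is essentially formal. A secondary point requiring (brief) verification is that the product $g\psi_j$ remains in $W^{1,\Phi}$ with support confined to the chosen chart, which is a routine application of the $\Delta_2$ condition to convert modular control into Luxemburg-norm control.
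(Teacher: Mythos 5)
Your proposal matches the paper's proof essentially step for step: cut off by $s_R$ (Corollary \ref{cor smooth cutoff}), cover $\esssupp$ of the truncation by the boundary charts $V$ of Theorem \ref{reflection property} together with interior sets compactly contained in $\Omega$, take a subordinate partition of unity, approximate the interior pieces by Corollary \ref{norm approximation of compactly supported functioms}, reflect the boundary pieces via Theorem \ref{local extension} with $\Phi_V$ inheriting (A1) and $\Delta_2$ from Lemmas \ref{extension of Phi} and \ref{Phi i rigid motion}, approximate on $V$, and restrict using the modular inequality that makes restriction non-expansive. The only cosmetic difference is that you cover the interior part of $\esssupp g$ by finitely many balls whereas the paper bundles it into a single set $V^0$; the substance is identical.
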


\begin{proof}
  Let $\Phi$, $\Omega$  be as in the assumption.  Take any $f\in W^{1,\Phi}(\Omega)$.  Let $s:\R^d\to [0,1]$ be a smooth compactly supported function such that $s(x)=1$ for $|x|\leq 1$ and $s(x)=0$ for $|x|>2$. For $R>0$ define $s_R(x)=s\left(\frac{x}{R}\right)$. For any $R>0$ recall that
  \[f_R:=fs_R\]
  and notice that $f_R$ has compact support. Since $\Phi$ satisfies $\Delta_2$, by Corollary \ref{cor smooth cutoff},
  \[\lim\limits_{R\to\infty}\|f-f_R\|_{W^{1,\Phi}}=0.\]
  Hence it suffices to show that the claim of the theorem holds for compactly supported $f\in W^{1,\Phi}(\Omega)$. We have two cases.
  
  $1^0$ $\esssupp f\subset\Omega$.
  
  $2^0$  $\esssupp f\cap\bd(\Omega)\neq\emptyset$.
  
   In the first case, the assumptions of Corollary \ref{norm approximation of compactly supported functioms} are satisfied. Hence there exists a sequence $\{u_n\}_{n=1}^\infty\subset C_C^\infty(\Omega)\subset C_C^\infty(\R^d)$ such that
  \[\lim\limits_{n\to \infty}\|f-u_n\|_{W^{1,\Phi}(\Omega)}=0,\]
  so the claim follows.
  
    In the second case we proceed as follows. Define the set
 \[(\bd(\Omega))_f=\bd(\Omega)\cap\esssupp f.\]
 Clearly $(\bd(\Omega))_f$ is compact. For each $x\in(\bd(\Omega))_f$ let $(V,\{V_{(t_0,t_1)}\}_{(t_0,t_1)\subset (-1,1)},\mathcal{R})$  be the triple from Theorem \ref{reflection property}. We define now $V_x=V$. By compactness of $(\bd(\Omega))_f$, there exists a natural number $K$ and points $x_k\in (\bd(\Omega))_f$, for $k=1,\dots,K$, such that 
 \[(\bd(\Omega))_f\subset\bigcup\limits_{k=1}^K V_{x_k}.\]
 For each $k=1,\dots, K$ define $V^k=V_{x_k}$, then the set $\esssupp f\setminus\bigcup\limits_{k=1}^KV^k$ is a compact subset of $\Omega$. Hence, there exists an open set $V^0$ such that $\overline{V^0}$ is compact and $\overline{V^0}\subset \Omega$ and $\esssupp f\setminus\bigcup\limits_{k=1}^KV^k\subset V^0$. We conclude that $\esssupp f\subset \bigcup\limits_{k=0}^K V^k$. Let $\{\psi_k\}_{k=0}^K$ be the partition of unity associated with the covering $\{V^k\}_{k=0}^K$ of $\esssupp f$. For each $k=0,\dots,K$ we define
 \[f^k=f\psi_k.\]
Clearly $f=\sum_{k=0}^K f^k$
 on $\esssupp  f$. Notice that $f-\sum_{k=0}^K f_k=0 \text{ a.e. on } \Omega\setminus \esssupp f$. Hence
  \[f=\sum_{k=0}^K f^k \text{ a.e. on }\Omega.\]
 Also, for each $k=0,\dots, K$ the function $f^k$ is an element of $W^{1,\Phi}(\Omega)$. Indeed, for $i=1,\dots, d$ and for a.e. $x\in\Omega$,
 \begin{align*}
     |\partial_i f^k(x)|=|\partial_i f(x)\psi_k(x)+ f(x)\partial_i\psi_k(x)|\leq  |\partial_i f(x)|+ C_{i,k}|f(x)|, 
 \end{align*}
  where  $C_{i,k}=\sup\limits_{x\in \R^d}|\partial_i\psi_k(x)|<\infty$. Therefore, for each $k=0,\dots, K$ and $i=1,\dots,d$ we have
  \begin{align*}
    I_{\Phi}(\partial_i f^k) &=\int\limits_\Omega\Phi(x,|\partial_if_k(x)|)dx\leq  \int\limits_\Omega\Phi(x, |\partial_i f(x)|+ C_{i,k}|f(x)|)dx\\&\leq  \frac{1}{2}\int\Phi(x,2 |\partial_i f(x)|)dx+\frac{1}{2}\int\limits_\Omega\Phi(x,2 C_{i,k}|f(x)|)dx<\infty,
  \end{align*}
  by $\Delta_2$ condition.
  
    Since $\esssupp f^0\subset \overline{V^0}\subset \Omega$, $f^0$ is a compactly supported element of $W^{1,\Phi}(\Omega)$, so it satisfies the assumption of case $1^0$ and so there exists a sequence $\{u_{n,0}\}_{n=1}^\infty\subset C_C^\infty(\R^d)$ such that

\begin{align}\label{++}
  \lim\limits_{n\to\infty}\|f^0-u_{n,0}\|_{W^{1,\Phi}(\Omega)}=0. 
\end{align}

 Fix $k\in\{1,\dots,K\}$ and let $\{V^k_{(t_0,t_1)}\}_{(t_0,t_1)\subset(-1,1)}$ be the family of open sets from Theorem \ref{reflection property} and $\mathcal{R}_k$ be the map from the same theorem, both associated with the set $V^k$. Define
 \[\Phi_k: V^k\times[0,\infty)\to[0,\infty)\]
 by the formula
 \[\Phi_k(x,t)=
   \begin{cases} 
      \Phi(x,t) & x\in V^k_{(0,1)} \\
      0 &  x\in\bd(\Omega)\cap V^k \\
      \Phi(\mathcal{R}_k(x),t) & x\in V^k_{(-1,0)}. 
   \end{cases}
\]
By Lemma \ref{extension of Phi} each function $\Phi_k$ is a MO function on $V^k$ and satisfies (A1) and $\Delta_2$ property. We proceed by extending $f^k$ to $V^k$. Define
 \[\widetilde{f^k}(x):=
   \begin{cases} 
      f^k(x) & x\in V^k_{(0,1)} \\
      0 &  x\in\bd(\Omega)\cap V^k \\
      f^k(\mathcal{R}_k(x)) & x\in V^k_{(-1,0)}.
   \end{cases}
\]
 By Theorem \ref{local extension} we conclude that $\widetilde{f^k}$ is compactly supported and
$\widetilde{f^k}\in W^{1,\Phi_k}(V^k)$. By Collorary \ref{norm approximation of compactly supported functioms}, there exists a sequence $\{u_{k,n}\}_{n=1}^\infty\subset C_C^\infty(V^k) $ such that
\begin{align}\label{+++}
    \lim\limits_{n\to\infty}\|\widetilde{f^k}-u_{n,k}\|_{W^{1,\Phi_k}(V^k)}=0.
\end{align}

Moreover, notice that if we extend $u_{n,k}$ to be $0$  in $\R^d\setminus \esssupp u_{n,k}$, this extension is of the class  $C_C^\infty(\R^d)$. Then for any $\lambda>0$ and multi-index $\alpha$ with $|\alpha|\leq 1$ and $k=1,\dots, K$ we have
\begin{align*}
 & I_\Phi (\lambda\partial^\alpha(f^k-u_{n,k|\Omega}))=\int\limits_\Omega\Phi(x,\lambda|\partial^\alpha f^k(x)-\partial^\alpha u_{n,k}(x)|)dx \\
 &= \int\limits_{V^k_{(0,1)}}\Phi(x,\lambda |\partial^\alpha f^k(x)-\partial^\alpha u_{n,k}(x)|)dx \\& \leq  \int\limits_{V^k_{(0,1)}}\Phi(x,\lambda |\partial^\alpha f^k(x)-\partial^\alpha u_{n,k}(x)|)dx +\int\limits_{V^k_{(-1,0)}}\Phi(\mathcal{R}(x),\lambda |\partial^\alpha \widetilde{f^k}(x)-\partial^\alpha u_{n,k}(x)|)dx \\&= \int\limits_{V^k}\Phi_k(x,\lambda |\partial^\alpha \widetilde{f^k}(x)-\partial^\alpha u_{n,k}(x)|)dx =I_{\Phi_k}(\lambda \partial^\alpha(\widetilde{f^k}-u_{n,k})).
\end{align*}
Hence we have for $k=1,\dots,K$,
\begin{align}\label{nierownosc na norme rozszerzenia}
 \|f^k-u_{n,k|\Omega}\|_{W^{1,\Phi}(\Omega)}\leq\|\widetilde{f^k}-u_{n,k}\|_{W^{1,\Phi_k}(V^k)}.   
\end{align}
Define now $u_n=\sum\limits_{k=0}^K u_{n,k}$, $u_n\in C_C^\infty(\R^d)$ and by (\ref{++}), (\ref{+++}), (\ref{nierownosc na norme rozszerzenia}) we have
\[\|f-u_{n|\Omega}\|_{W^{1,\Phi}(\Omega)}\leq\sum\limits_{k=0}^K\|f^k-u_{n,k|\Omega}\|_{W^{1,\Phi}(\Omega)}\leq \sum\limits_{k=0}^K\|\widetilde{f^k}-u_{n,k}\|_{W^{1,\Phi_k}(V^k)}\]
and since the last expression goes to $0$ as $n\to\infty$, we conclude that $\{u_n\}_{n=1}^\infty$ is the desired sequence.  
\end{proof}

We finish with a corollary on density in variable exponent Sobolev spaces $W^{1, p(\cdot)}(\Omega)$. 

\begin{corollary}\label{cor: result of 6}
 Let $\Omega$ be an open set in $\R^d$ such that its boundary is of class $C^1$. Let $\Phi(x,t) =t^{p(x)}$, $t\ge 0$, $p(x) \ge 1$ a.e. in $\Omega$.  If $p(x)$ is essentially bounded on $\Omega$ and satisfies log-H\"older condition \rm{(}see (4.1) in \cite{KamZyl3}\rm{)} then the set of restrictions of functions from $C_C^\infty(\R^d) $ to $\Omega$ is dense in $W^{1,p(\cdot)}(\Omega)$.
\end{corollary}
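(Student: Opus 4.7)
The plan is to reduce the statement to Theorem \ref{main result of 6} applied to the particular Musielak--Orlicz function $\Phi(x,t)=t^{p(x)}$. Since $\Omega$ already has $C^1$ boundary by hypothesis, the only thing that needs to be checked is that this $\Phi$ is a genuine MO function on $\Omega$ which satisfies both the growth condition $\Delta_2$ and the (A1) condition from the introduction.

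First I would verify that $\Phi$ is a MO function: for each fixed $x\in\Omega$ the map $t\mapsto t^{p(x)}$ is convex (since $p(x)\geq 1$), vanishes exactly at $t=0$, and is strictly positive otherwise; for each fixed $t\geq 0$ measurability in $x$ follows from measurability of $p(\cdot)$. Next, for $\Delta_2$, set $p^+=\esssup_{x\in\Omega}p(x)<\infty$, which is finite by the boundedness assumption. For every $t\ge 0$ and a.e. $x\in\Omega$,
\[
\Phi(x,2t)=(2t)^{p(x)}=2^{p(x)}t^{p(x)}\leq 2^{p^+}\Phi(x,t),
\]
so $\Phi\in\Delta_2$ with constant $C=2^{p^+}$ and $h\equiv 0$.

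The main verification is the (A1) condition, and here one uses the log-H\"older hypothesis on $p(\cdot)$. This is precisely the content of the variable exponent analogue worked out in the first part of this project: as recalled right after the definition of (A1) in Section \ref{section 1}, condition (A1) for $\Phi(x,t)=t^{p(x)}$ is equivalent to log-H\"older continuity of the exponent, and that equivalence is proved in \cite{KamZyl3} (the corresponding statement is referenced immediately after (4.1) there). I would simply quote that result; the typical argument is to compute $\Phi^{-1}(y,s)=s^{1/p(y)}$ and show that if $|p(x)-p(y)|\leq C/\log(e+1/|x-y|)$ on sufficiently small balls $B$ with $|x-y|\lesssim |B|^{1/d}$, then for $s\in[1,1/|B|]$ one has $s^{1/p(x)-1/p(y)}$ bounded below by a universal constant $\beta$, which rearranges to the required inequality $\Phi(x,\beta t)\leq \Phi(y,t)$ on the stated range of $t$.

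Once both (A1) and $\Delta_2$ have been verified, all hypotheses of Theorem \ref{main result of 6} are met for this $\Phi$, so the set of restrictions of $C_C^\infty(\R^d)$ to $\Omega$ is dense in $W^{1,\Phi}(\Omega)=W^{1,p(\cdot)}(\Omega)$, which is exactly the claim. I do not anticipate any serious obstacle beyond citing the (A1)$\Leftrightarrow$log-H\"older equivalence correctly; the rest is bookkeeping.
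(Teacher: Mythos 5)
Your proposal is correct and takes essentially the same route as the paper: reduce to Theorem \ref{main result of 6} by checking that $\Phi(x,t)=t^{p(x)}$ is a MO function satisfying $\Delta_2$ (from essential boundedness of $p$) and (A1) (via the equivalence with log-H\"older continuity proved in \cite{KamZyl3}). The only cosmetic difference is that you verify $\Delta_2$ by a direct computation while the paper cites the known characterization; the substance is identical.
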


\begin{proof} It is well known that the exponent variable MO function satisfies condition $\Delta_2$ if and only if the exponent $p(x)$ is essentially bounded on $\Omega$ \cite{Hasbook, zyl}. Moreover condition (A1) is equivalent to log-H\"older condition by Remark 4.3   in \cite{KamZyl3}. Thus the conclusion follows from Theorem \ref{main result of 6}.

\end{proof}

\end{section}

\end{document}